\documentclass[a4paper]{scrartcl}

\usepackage[hypertexnames=false]{hyperref}
\usepackage{amsmath}
\numberwithin{equation}{section} 
\usepackage{./fjimacros}
\usepackage[backend=biber,style=numeric,date=year,maxbibnames=9,sorting=nyt]{biblatex}
\usepackage{tikz}
\usetikzlibrary{calc}
\usepackage{wrapfig}
\usepackage{subcaption}
\usepackage{enumitem} %Here used to label items, not LIPICS compatible!
\usepackage{comment}

\newcommand\Gc{\mathcal G}
\newcommand\floord[2][d]{\floor*{#2}_{#1}}
\newcommand\restd[2][d]{\mg*{#2}_{#1}}
\newcommand\V{\mathcal V}
\newcommand\Bin{\operatorname{Bin}}
\newcommand\poisson{\widetilde}
\newcommand\poissonn[1][λ]{\poisson{n}_{#1}}
\newcommand\bucket[2]{\operatorname{bucket}_{#1}\kl[\big]{#2}}
\DeclarePairedDelimiterXPP{\mellin}[1]{\mathcal M}[]{}{#1}
\newcommand\fringephi[1][k]{\delta_{#1}}
\newcommand\phiprot[1][k]{\phi^{#1\mathrm{-prot}}}
\newcommand\varpiprot[1][k]{\varpi[\phiprot[#1]]}
\newcommand\fprot[1][k]{f^{#1\mathrm{-prot}}}
\newcommand\shape{^{\mathrm{sh}}}

\newcommand\degk[1][k]{^{\operatorname{deg-}#1}}
\newcommand\disk{\overline{\mathbb D}}

\newcommand\rootedsubtrees{\operatorname{subtrees}^*}
\newcommand\unrootedsubtrees{\operatorname{subtrees}}

\newcommand\nnn{\bar n}

\title{Fringe subtrees of split trees and fractional split trees}
\author{Cecilia Holmgren\thanks{\ Department of Mathematics, Uppsala University, Sweden.\\Funded by Knut and Alice Wallenberg Foundation; Ragnar Söderberg’s Foundation; Göran Gustafsson Foundation; the
Swedish Research Council}\and Jasper Ischebeck\footnotemark[\value{footnote}]\and Svante Janson\footnotemark[\value{footnote}]} % chktex 42

\theoremstyle{plain}
\newtheorem{theorem}{Theorem}[section]
\newtheorem{lemma}[theorem]{Lemma}
\newtheorem{proposition}[theorem]{Proposition}
\newtheorem{corollary}[theorem]{Corollary}
\theoremstyle{definition}
\newtheorem{example}[theorem]{Example}
\newtheorem{remark}[theorem]{Remark}

\begin{document}
\maketitle

\begin{abstract}
    We consider additive functionals $X_n(\phi)$ with small toll functions on split trees and a generalization of split trees, which we call fractional split trees, where the split vector does not need to sum up to 1. These additive functionals encompass e.g.\ the number of nodes, number of leaves and the number of fringe trees of a certain size.
    We show convergence of the first moment to a limit, which we can explicitly compute
    if $s_0=s_1=0$ and for some models with Beta-distributed splitter. For
    $s_0+s_1>0$, the first moment is given in terms of negative moments of a
    perpetuity and can often be approximated to arbitrary precision with
    known bounds. In split trees and certain fractional split trees, the
    standard deviation is of smaller order than the first moment, where we
    show a weak law of large numbers. In other fractional split trees, the
    standard deviation is of the same order and we show a distribution limit
    using the contraction method. 
\end{abstract}

\section{Introduction}

In this paper, we study fringe trees of \emph{split trees}, a general class of random trees that encompasses many important data structures. The \emph{split tree} is a random tree model introduced by \cite{devroye:split-tree} that encompasses several other random tree models, like binary search trees, tries, digital search trees, quadtrees, simplex trees, median of $(2h-1)$ trees and $m$-ary search trees. It consists of a $b$-ary tree (with $b≥2$), where each node can hold up to a fixed number $s≥1$ of ``balls''. The following introduction to split trees is from \cite{broutin_total_2012}.

% The model starts with $n=1$ ball in the root. It then grows by adding more balls to the root. When the root has $n=s+1$ balls, it splits into $b$ children, where the number of balls in each child is determined by a random vector $\V = (V_1,\dots, V_b)$, which is called the \emph{splitter}. The split vector $\V$ is chosen independently of the previous balls and has to fulfill $\sum_{i=1}^b V_i = 1$. The split tree is then recursively built by adding the remaining $n-s-1$ balls to the children.
Consider an infinite rooted $b$-ary tree (every node has $b$ children). The nodes are identified with the set of finite words on an alphabet with $b$ letters, $\mg{1,\dots, b}^\ast := \bigcup_{n \geq 0} \{1, \dots, b\}^n$. We abbreviate $[b] := \mg{1,\dots, b}$. The root is represented by the empty word $\eps$. We write $u \preceq v$ to denote that $u$ is an ancestor of $v$ (as words, $u$ is a prefix of $v$). In particular, for the empty word $\eps$, we have $\eps \preceq v$ for any $v \in [b]^*$. Concatenation of two strings $u,v$ is denoted $uv$, so $P[b]$ denotes all children of a set $P\subset [b]^*$.

A split tree $T_n$ of cardinality $n$ is constructed by distributing $n$ balls (pieces of data) to the nodes $u \in [b]^*$. To describe the tree, it suffices to define the number of balls $n_u$ in the subtree rooted at any node $u \in [b]^\ast$. The tree $T_n$ is then defined as the smallest relevant tree, that is, the subset of nodes $u$ such that $n_u > 0$ (which is indeed a tree). 

In the model, internal nodes all contain $s_0 \geq 0$ balls, and external nodes can contain up to $s≥1$ balls. The construction then resembles a divide-and-conquer procedure, where the partitioning pattern depends on a random vector of proportions. Let $\V = (V_1, \dots, V_b)$ satisfy $V_i \geq 0$ and $\sum_i V_i = 1$; each node $u \in [b]^*$ receives an independent copy $\V^{(u)}$ of the random vector $\V$. Define $V_{(1)}≥ V_{(2)} ≥ \dots ≥ V_{(b)}$ as $\V$ sorted in descending order. In the following, we always assume that $\Pk{V_{(1)}=1} < 1$ and $\Pk{V_{(b)}>0}>0$ to avoid trivialities. The values $V_1,\dots, V_b$ are called \emph{splitters} and $\V$ is called \emph{split vector}. Often $\V$ is assumed to be exchangeable for simplicity, but we do not require this.
We can now describe $(n_u, u \in [b]^*)$. The tree contains $n$ balls, and we naturally have $n_{\eps} = n$. The split procedure is then carried on from parent to children as long as $n_v > s$.

Given the cardinality $n_v > s$ and the split vector $\V^{(v)} = (V_1, V_2, \dots, V_b)$ of $v$, the cardinalities $(n_{v1}, n_{v2}, \dots, n_{vb})$ of the $b$ subtrees rooted at $v1, v2, \dots, vb$ are multinomially distributed as
\begin{equation}\label{def:nv}
\operatorname{Mult}(n_v - s_0 - b s_1; V_1, \dots, V_b) + (s_1, s_1, \dots, s_1),
\end{equation}
where $0 \leq s_0$ and $0 \leq b s_1 \leq s + 1 - s_0$. The parameters $s_0$ and $s_1$ can be explained as $s_0$ balls being \emph{retained} inside the node and $s_1$ balls \emph{redistributed} to each child.

% Depending on the choice of parameters $s_0, s_1$ and $s$ and the distribution of $\V = (V_1, \dots, V_b)$, many important data structures may be modeled, such as binary search trees, $m$-ary search trees, median-of-$(2k + 1)$ trees, quad trees, simplex trees (see~\cite{devroye:split-tree}).

 In the proofs, it is often useful to compare \eqref{def:nv} with a simpler, multinomially distributed variable,
\begin{equation}\label{def:ns}
    (n'_{v1}, \dots, n_{vb}') := \operatorname{Mult}(n'_v;  V_1, \dots, V_b).
\end{equation}
If $s_0 = s_1 = 0$, then $n'_v = n_v$ in distribution as long as $n'_v > s$.
We couple $n_v$ and $n'_v$ in a way such that the difference $n''_{v} := n_v-n'_v$ fulfills
\begin{equation}\label{def:nss}
    \sum_{i=1}^b \abs{n''_{vi}}  ≤ \abs{n''_v} + s_0 + 2bs_1 ≤ \abs{n''_v} + 2s + 2
\end{equation}
for $v\in[b]^*$. This is e.g.\ possible by simulating both multinomial
distributions from the same sequence of i.i.d.\ variables with distribution
$\V$. 
We note also that this coupling, by induction on $\abs v$, gives the upper
bound
\begin{equation}
  \label{eq:n''-upper}
  n''_v\le s_1.
\end{equation}
For a node $v = (i_1\dots i_m)\in[b]^*$, the term $n'_v$ is binomially distributed conditional on the split vectors $\V^u$ of its parents $u \preceq v$. We call the conditional expectation of $n'_v$ the \emph{mass} of $v$, given as
\begin{equation}\label{def:mass}
    M_v^n := n\prod_{j=1}^m V^{(i_1\dots i_{j-1})}_{(i_j)} = nM_v^1,
\end{equation}
where for convenience $n$ can be any positive real number.

We also consider a novel generalization of split trees, which we call \emph{fractional split trees}, where the split vector is allowed to sum up to a number smaller than 1, i.e.\ we only have $\sum_{i=1}^b V_i ≤ 1$. Instead of \eqref{def:nv}, $(n_{v1}, \dots, n_{vb})$ is then distributed as the first $b$ elements of
\begin{equation}\label{def:nv-frac}
    \operatorname{Mult}\kl[\bigg]{n_v - s_0 - b s_1; V_1, \dots, V_b, (1-\sum_{i=1}^b V_i)} + (s_1, s_1, \dots, s_1, 0).
\end{equation}
The last element of \eqref{def:nv-frac} accounts for the possibility that a
ball disappears instead of going to one of the children. In order to make
sure the tree may grow to infinity with $n$, we
assume that $\sum_{i=1}^b \Pk{V_i>0} > 1$.
(This makes the set of nodes with positive mass $M^1_v$ a supercritical
branching process, and thus infinite with positive probability.
Otherwise, the set of nodes with positive mass would be finite a.s., and
then the size of tree would stay bounded as $n\to\infty$.)
Note that in a fractional split tree, leaves may have more than $s$ balls.

This definition already hints at that fractional split trees appear as subtrees of split trees. E.g., a split tree where for every node the first child and its subtree is removed is a fractional split tree with splitter $(V_2,\dots, V_b)$.
If we, as another example, for each node in a split tree only consider the $k<b$ children that have the highest $k$ splitters $(V_{(1)}, \dots, V_{(k)})$,
the resulting tree is a fractional split tree with split vector $(V_{(1)}, \dots, V_{(k)})$, which does not sum up to 1 in general.

A \emph{fringe subtree} $T_n^v$ of the tree $T_n$, where $v$ is a node in $T_n$,
is defined as the subtree consisting of $v$ and all its descendants in $T_n$.
The study of random fringe trees (i.e., the fringe tree rooted at a
uniformly random node $v$)
and asymptotic fringe tree distributions
was initiated by \textcite{Aldous1991AsymptoticFD}.

A general model to count different structures in trees are toll functions. A
\emph{toll function} $\phi$ is simply a function mapping trees with balls to
$\RR$. This toll function is then summed over all fringe subtrees of the
tree: 
We define $X_n := X_n(\phi)$ as the sum
\begin{equation}\label{def:xn}
    X_n(\phi) := \sum_{v \in T_n} \phi(T_n^v),
\end{equation}
called \emph{additive functional} of $\phi$.
As we only consider the tree $T_n$ in this paper, we write $\phi(v) := \phi(T_n^v)$ as shorthand.
This definition allows for a uniform treatment of the number of nodes and of balls; to count nodes, $\phi\equiv 1$, and to count leaves use $\phi(T) = \eins(T \text{ is a single leaf})$.

A very interesting case is the number of fringe subtrees with a certain number of balls. For some $k\in\NN$, define the toll function
$\fringephi(T_n^v) := \eins(n_v=k)$, such that
\begin{equation}\label{def:phi-k}
    X_n(\fringephi) = \sum_{v\in T_n} \eins(n_v=k).
\end{equation}
Yet another important example is to count the number of fringe subtrees that are equal to a given tree $T \in [b]^*$, which can be counted as
\begin{equation}\label{toll1}
    X_n(\delta_T) = \sum_{v\in T_n} \eins(T_n^v = T)
\end{equation}
with the toll function $\delta_T(T') := \eins(T'=T)$. 
The distribution of a random fringe tree of $T_n$
is then given by
(where $T$ ranges over all trees)
\begin{equation}\label{toll2}
\P(\text{a random fringe tree}=T)
=
  \frac{X_n(\delta_T)}{X_n(1)}.
\end{equation}
The asymptotic fringe tree distribution as $n\to\infty$, 
when it exists,
is thus given by the limits of these quotients. 

To study this family of additive functionals is thus equivalent to studying
the distribution of a random fringe tree.
Similarly, one could count the number of trees that are isomorphic to $T$ in some sense. For split trees, it is usually sufficient to consider $X_n(\fringephi)$ if $T$ has $k$ balls, because it is easy to see that
\begin{equation}
    \Ek*{X_n(\delta_T)} = \Pk*{T_{k}= T}\Ek*{X_n(\fringephi)}.
\end{equation}

The asymptotic behavior of $X_n(\fringephi)$ and $X_n(\delta_T)$ has been
studied for multiple random tree models, including e.g.\
conditioned Galton--Watson trees \cite{Aldous1991AsymptoticFD},
the recursive tree and binary search tree
\cite{Aldous1991AsymptoticFD,devroye_2003,wagner:additive,holmgren_limit_2015}, 
the $m$-ary search tree \cite{holmgren_cmj_2017,holmgren_multivariate_2017}, 
preferential attachment trees 
\cite{holmgren_cmj_2017,holmgren_multivariate_2017,holmgren_2023}, 
\cite{holmgren_cmj_2017,holmgren_multivariate_2017}, 
the trie \cite{janson:trie} and the patricia trie \cite{patricia-fringe}.
Asymptotic results for small toll functions in general have been found for
many random tree models, e.g.\ for recursive trees and binary search trees
\cite{holmgren_limit_2015}, for simply generated trees or conditioned
Galton--Watson trees \cite{wagner:additive,janson:gw-fringe} and for trees
with given degree sequences \cite{berzunza_ojeda_fringe_2025}. See
\cite[Section 3.1.2]{drmota:rt} for an overview. 

Because the subtree $T_n^v$ of a node $v$ is, conditional on the number $n_v$ of balls, distributed as $T_{n_v}$, the mean of $X_n$ only depends on $\phi_n := \Ek*{\phi(T_n)}$ for $n≥1$ and is 
\begin{equation}\label{def:phin}
    \Ek*{X_n} = \E \sum_{v\in T_n} \phi_{n_v}.
\end{equation}
% We can therefore assume that $\phi(v)$ is a function of $n_v$ while working with the mean.
For nodes $\phi(T_n) = \eins(n>0) = 1$; for balls $\phi(T_n) = n$ if $n≤s$ and $\phi(T_n) = s_0$ if $n>s$. Note that in both cases $\phi_n = \phi(T_n)$.
We will have more advanced examples in \Cref{sec:examples}.

For fractional split trees there exists a unique $\beta\in (0,1]$ such that
\begin{equation}\label{def:beta}
    \E\sum_{i=1}^b V_i^\beta = 1,
\end{equation}
which we call the \emph{Malthusian parameter} of $\V$. Split trees have $\beta=1$. Observe that the condition $\E\sum_{i=1}^b \eins(V_i>0) > 1$ is what ensures $\beta>0$. Furthermore, we define $V$ as a uniformly randomly chosen element of $\V$, i.e.\ $V:= V_I$, where $I$ is uniformly distributed on $[b]$ and independent of $\V$. We can alternatively characterize $\beta$ as the value where $\Ek{V^\beta} = \frac1b$. Note that if $\V$ is exchangeable, then $V_1$, \dots, $V_b$ and $V$ all have the same distribution.

We define a random variable $\widehat V$ with the measure $\P_{\widehat V}$ given as
\begin{equation}\label{def:v-bias}
    \frac{\mathrm d\P_{\widehat V}}{\mathrm d\P_{V}}(v) = bv^\beta.
\end{equation}
Because $\Ek{bV^\beta} = 1$, this is a probability measure. We regard $\widehat V$ as $V$ \emph{size-biased} with the factor $V^\beta$.
If $\beta=1$, this is just the regular size-biasing. We can alternatively
characterize $\widehat V$ by the property that for every measurable function $f$,
\begin{equation}\label{eq:v-bias}
    \Ek*{f(\widehat V)} = \Ek*{bf(V)V^\beta} 
\end{equation}
if the expectation of $f(V)V^\beta$ exists. We further define
\begin{equation}\label{def:mu}
    \mu := \Ek{-\ln \widehat V} = \Ek{-bV^\beta\ln V} = \sum_{i=1}^b\Ek{-V_i^\beta\ln V_i}
\in(0,\infty).
\end{equation}

We call it the \emph{lattice case} that there exists a $d>0$ such that $\ln V \in d\ZZ$ a.s.\ and define
$d := \gcd\kl{\ln V}$ as the largest such $d$. If no such $d$ exists, we formally set $d:=\gcd{\ln V} := 0$. We show in \Cref{thm:mean} that the expected number of nodes has oscillations that are $d$-periodic in $\ln n$. For $x≥0$, we write $\floord x := d\floor*{\frac xd}$ for the largest number in $d\ZZ$ that is smaller or equal to $x$ and $\restd x:= x-\floord x$. The lattice case is somewhat special, and most split tree models are nonlattice. Well-known exceptions are some tries and digital search trees, where the split vector is deterministic.

We denote by $x^+$ the positive part of $x\in\RR$, i.e.\ $x^+ = \max\mg{x,0}$. For some statement $A$, we denote by $\eins(A)$ the indicator function that is 1 if $A$ is true and 0 otherwise. %, e.g.\ $\eins(x>0)$ is $1$ if $x>0$ and $0$ else.
For a random variable $X$, we write $X\sim \mu$ if $X$ has the probability distribution $\mu$ and we write $\P_X$ for the probability distribution of $X$, so that $X\sim \mu$ is equivalent to $\P_X=\mu$. For two random variables $X$ and $Y$, we denote by $X\overset d= Y$ that $\P_X = \P_Y$.

For a function $G$ on $\RR$ and a measure $U$ on $\RR$, we define the
\emph{convolution} of $G$ and $U$ as the function (when the integral exists)
\begin{equation}\label{def:convolution}
    (U*G)(x) := \int_{-\infty}^\infty G(x-y)\mathrm dU(y).
\end{equation}
Note that if $\P_X$ is the probability distribution of some random variable $X$, then $(\P_X*G)(x) = \Ek*{G(x-X)}$.  %Also note that if we associate measures $U$ with their distribution function $x\mapsto U((-\infty,x])$, then $U*G=G*U$ if $G$ is also a measure or distribution function of a measure.

\section{Results}

The purpose of this paper is the study of the asymptotic distribution and
properties of a random fringe tree in a random split tree $T_n$.
We state the results in a general form for additive functionals using rather
general toll functions.
Note, in particular, that choosing 
$\phi=1$ gives the (random) number of nodes in the tree, and that
$\phi=\delta_T$ defined after \eqref{toll1}
gives the number of subtrees equal to $T$; 
by combining them we can obtain the
distribution of a random fringe subtree, and the results below yield results
on the asymptotic fringe tree distribution, see \eqref{toll2}.

\begin{theorem}\label{thm:mean}
 Let $\phi$ be a toll function such that $\phi_n = \Ok{n^{\beta-\delta}}$ for some $\delta>0$. This is e.g.\ true for counting leaves, for counting nodes and for counting fringe subtrees of a certain size.
 Then, there exists a bounded, continuous, $d$-periodic function $\varpi := \varpi\br\phi$, such that
 \begin{equation}
     \Ek{X_n(\phi)} = \varpi(\ln n) n^\beta + o(n^β).
 \end{equation}
 In the nonlattice case, $\varpi$ is constant. If $\phi_n ≥ 0$ for all $n$ and $\phi_n > 0$ for some $n≥s_1$, then $\varpi$ is bounded away from $0$. Recall that for split trees $\beta=1$.
\end{theorem}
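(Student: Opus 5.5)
The approach is renewal theory: turn $\Ek{X_n(\phi)}$ into a renewal-type equation in the variable $x=\ln n$ and apply the key renewal theorem (Blackwell's theorem in the lattice case). By \eqref{def:phin}, $\Ek{X_n}=\sum_{v}\Ek{\phi_{n_v}}$, summing over all $v\in[b]^*$ with $\phi_0:=0$. The first step is to replace $n_v$ by the binomial proxy $n'_v$ of \eqref{def:ns}, whose conditional law given the split vectors is $\Bin(n,M^1_v)$ (in the fractional case as well, via the multinomial \eqref{def:nv-frac}). For this I define the function
\begin{equation*}
  F(y):=\E\,\phi_{\Bin(\lfloor e^y\rfloor\text{-ish},\,\cdot)}
\end{equation*}
or, more cleanly, I Poissonize. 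Let $\Phi(\lambda):=\E\phi_{\mathrm{Po}(\lambda)}$. The Poissonized proxy of the mean is then
\begin{equation*}
  m(n)=\sum_v \E\,\Phi(nM^1_v)=\sum_{k\ge0}\E\sum_{|v|=k}\Phi(nM^1_v).
\end{equation*}
Using the many-to-one formula with the $\beta$-tilt \eqref{eq:v-bias}, we get $\E\sum_{|v|=k} g(M^1_v)=\E\bigl[ g(e^{-S_k})e^{\beta S_k}\bigr]$, where $S_k$ is a random walk with i.i.d.\ steps $-\ln\widehat V$ of mean $\mu\in(0,\infty)$. Setting $G(x):=e^{-\beta x}\Phi(e^x)$, this yields
\begin{equation*}
  n^{-\beta}m(n)=\sum_k \E\,G(\ln n-S_k)=(U*G)(\ln n),
\end{equation*}
with $U$ the renewal measure of $S$.

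The second step is to check that $G$ is directly Riemann integrable. For $x\to+\infty$, $\Phi(e^x)=O(e^{(\beta-\delta)x})$ by the hypothesis on $\phi_n$, so $G(x)=O(e^{-\delta x})$. For $x\to-\infty$, $\Phi(\lambda)=O(\lambda)$ because $\phi_0=0$, so $G(x)=O(e^{(1-\beta)x})$. When $\beta<1$ this decays, but for $\beta=1$ it does not. Hence the nodes with $n_v$ of order $1$ need more care; for split trees the bound $\Phi(\lambda)\le C\lambda^{1}$ alone is not enough. I would instead treat small $\lambda$ through the actual tree: only nodes with $n_v>0$ count, and I would truncate at $\lambda\ge\varepsilon$ and bound the contribution of nodes of small mass separately. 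By \eqref{eq:n''-upper} and the coupling \eqref{def:nss}, nodes with $n_v>0$ but $nM_v$ small have total expected number $O(\varepsilon^{c} n^\beta)$. With $G$ continuous (which holds since $\Phi$ is analytic), the key renewal theorem gives $(U*G)(x)\to\mu^{-1}\int G$ in the nonlattice case. In the lattice case, $(U*G)(x)-\varpi(x)\to0$ with $\varpi(x)=\frac d\mu\sum_{j\in\ZZ}G(x+jd)$. This $\varpi$ is $d$-periodic, bounded and continuous, since the series converges uniformly by the exponential decay of $G$.

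The third step, which I expect to be the main obstacle, is de-Poissonization and removing the $s_0,s_1$ perturbation, i.e.\ showing $\Ek{X_n}-m(n)=o(n^\beta)$. Conditionally on the masses, $n'_v\sim\Bin(n,M^1_v)$ is close to $\mathrm{Po}(nM^1_v)$ in the relevant sense when $M^1_v$ is small. The difference $n''_v$ accumulates along the path, but by \eqref{def:nss} it grows at most linearly in depth, and the relevant nodes sit at depth $O(\ln n)$. I would use the bound $|\phi_m-\phi_{m'}|$ together with the polynomial bound on $\phi$, and restrict attention to depths $k\le C\ln n$; beyond that, large-deviation estimates for $S_k$ make the contributions negligible. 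If $s_0+s_1>0$ this perturbation is not small for nodes with $n_v=O(1)$. There I would handle these nodes by conditioning on the first ancestor $u$ with $n_u\le K$. The expected number of such "$K$-fringe roots" is itself an additive functional of the first kind, with toll $\eins(n_u\le K<n_{\text{parent}})$, whose contribution per unit of $n^\beta$ converges by the same renewal argument. Each such root then contributes a bounded, $n$-independent expectation.

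Finally, for positivity, suppose $\phi_n\ge0$ and $\phi_{n_0}>0$ for some $n_0\ge s_1$. Then $G\ge0$, and $G$ is strictly positive on an interval of $x$, since nodes of mass about $n_0$ have $n_v=n_0$ with probability bounded below. This uses $n_0\ge s_1$, because every node has at least $s_1$ balls. The periodic sum, or the integral, is therefore bounded below by a positive constant.
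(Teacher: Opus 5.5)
There is a genuine gap, and it is already in your first step, independent of $s_0,s_1$ and of de-Poissonization. The sum $m(n)=\sum_{v\in[b]^*}\E\,\Phi(nM^1_v)$ is not a proxy for $\E X_n$. The binomial counts $n'_v$ keep splitting below the leaves, whereas in $T_n$ a node with $n_v\le s$ does not split. The identity $n_v=n'_v$ for $s_0=s_1=0$ holds only while the parent has more than $s$ balls. Every descendant of a leaf with $n'_v\ge1$ is therefore counted in $m(n)$ but absent from $T_n$. There are $\Theta(n^\beta)$ such nodes, of mass of order $1$, so truncating at $nM^1_v\ge\varepsilon$ does not remove them.

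Two concrete checks show this. Take $s=1$, $s_0=s_1=0$, $\beta=1$, $\phi\equiv1$, nonlattice. Your truncated proxy is asymptotically $\frac n\mu\int_\varepsilon^\infty u^{-2}(1-e^{-u})\,\mathrm du\approx\frac n\mu\ln(1/\varepsilon)$, whereas $\E X_n\sim n(1+\mu^{-1})$. For $\beta<1$, where your $G$ is directly Riemann integrable, take $\phi=\delta_1$ and $s=1$. The proxy gives the constant $\Gamma(1-\beta)/\mu$ instead of the true $\Gamma(1-\beta)(1-b\E V)/\mu$ from \eqref{varpi-delta1}. So the non-integrability you noticed at $x\to-\infty$ is a symptom of a wrong representation, not a small-mass nuisance.

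When $s_0=s_1=0$ this can be repaired. Replace $\phi$ by $\phi''$ from \eqref{def:parent-phi}; this changes $X_n$ only by $\phi(T_n)$ and vanishes for $n\le s$. Then $\E X_n(\phi'')=\E\sum_{v\in[b]^*}\phi''_{n'_v}$ holds exactly. After that, your renewal argument plus a node-by-node binomial--Poisson comparison is a plausible alternative route, and it gives continuity of $\varpi$ for free. The paper itself uses this Poisson picture only after \Cref{thm:mean} is proved, and de-Poissonizes via that theorem.

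For $s_0+s_1>0$ no exact identity of this kind is available, and your patch is circular. The cut at the ``first ancestor with $n_u\le K$'' is defined by counts, not masses. It is therefore not $\mathcal G$-measurable and cannot be counted directly by the renewal theorem. The number of such roots with $n_u=j$ is again an additive functional, and its toll lives on nodes of size about $K$. That is exactly where the $s_0,s_1$ shift and the leaf effect change its value by $\Theta(1)$, so ``the same renewal argument'' on the proxy does not apply. Your step 3 would also need control of $\phi_m-\phi_{m'}$, which the hypothesis does not give, and an $\Ok{\ln n}$ perturbation is useless at the fringe anyway.

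The missing idea is the paper's mass-threshold argument:
\begin{itemize}
\item cut at a \emph{mass} threshold $B$, using the set $R$ of \Cref{lem:r}, whose bins $R_{\zeta B}$ are counted exactly by the key renewal theorem in \Cref{lem:rzeta};
\item use $\E[X_r\mid n_r]=\E X_{n_r}$;
\item show that most $r\in R$ have $\abs{n_r-M^n_r}<B^{0.6}$ (\Cref{lem:wenige-extrem});
\item bound differences of means, $\E X_{m+k}-\E X_m=\Ok{k^\beta}$ (\Cref{lem:order}), rather than differences of $\phi$;
\item let $B\to\infty$ by a Cauchy argument, and extend to unbounded $\phi$ by \Cref{lem:tail}.
\end{itemize}

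Along this route two further points need work. The approximants are step functions in the lattice case, so continuity of $\varpi$ needs a separate argument. The lower bound also cannot be read off from positivity of your proxy $G$. One must show that nodes with exactly $n_0$ balls actually occur in $T_n$ with positive density. \Cref{lem:lower-bound} does this with a supercritical Galton--Watson argument before applying the boundary-set bound.
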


% This theorem can be alternatively stated as a recursion equation:
% \begin{theorem}\label{thm:mean-as-recursion}
%     Let $k, s, s_0, s_1 \in \NN$ such that $ks_1+s_0≤s$ and $V_1,\dots, V_k$ be a random variables with $\sum_{i=1}^b V_1 ≤ 1$ such that $\Pk*{\max_{i=1}^b V_i = 1}<1$. Let $\beta$ be the unique value with
%     \begin{equation}
%         1 = \sum_{i=1}^b \Ek*{V_i^\beta}.
%     \end{equation}
%     For a sequence $(\phi_n)$ such that there exists a $\delta>0$ with $\phi_n = \Ok*{n^{\beta-\delta}}$ define the sequence $a_n$ with $a_n=\phi_n$ for $n≤s$ and for $n>s$ as
%     \begin{equation}
%         a_n = \sum_{i=1}^b \Ek*{a_{n_i}},
%     \end{equation}
%     where given $V_1,\dots, V_k$
% \end{theorem}

Unfortunately, the proof of \Cref{thm:mean} does not give an explicit formula for $\varpi$, but we will show how to calculate it in special cases below in \Cref{thm:constants} and \Cref{rem:beta}.
With \Cref{thm:mean}, we can use the contraction method to find the limit distribution of $n^{-β}\kl{X_n-\Ek{X_n}}$. 
For two random variables $X$ and $Y$ on the same probability space, we
define the $L_2$ distance between $X$ and $Y$ to be
$\Ek*{\abs{X-Y}^2}^\frac12$. Convergence in $L_2$ implies convergence in
probability and of first and second moments. For random variables $X$ and $Y$ that are not necessarily on the same probability space, we call a random variable $(X',Y')$ a coupling of $X$ and $Y$ if $X\overset d=X'$ and $Y\overset d=Y'$. The Wasserstein $\ell_2$ metric is then defined as the minimum of $\Ek*{\abs{X'-Y'}^2}^\frac12$ over all couplings $(X',Y')$ of $X$ and $Y$. Convergence in $\ell_2$ is equivalent to convergence in distribution and convergence of first and second moments.

\begin{theorem}\label{thm:limit}
    Assume that $\varpi$ from \Cref{thm:mean} is bounded away from zero. Let $\phi(v)$ for $v\in [b]^*$ be a function of $n_{v1}, \dots, n_{vb}$ that fulfills $\Ek*{\phi(T_n)^2} = \Ok*{n^{2(\beta-\delta)}}$ for some $\delta>0$. This moment condition is in particular true for bounded toll functions, e.g.\ counting leaves, nodes or subtrees with a certain number of balls.
    Then, as $n\to\infty$,
    \begin{equation}\label{limit:convergence}
        \frac{X_n - \Ek*{X_n}}{n^β\varpi(\ln n)} \overset{\ell_2}\longrightarrow Y,
    \end{equation}
    where $Y$ has the unique centered square-integrable probability distribution satisfying
    \begin{equation}
        \label{y-recursion}
        Y + 1 \overset d= \sum_{i=1}^b V_i^\beta (Y^{(i)} + 1),
    \end{equation}
    where $Y^{(i)}$ are independent copies of $Y$ independent of $\V$. Therefore,
    \begin{equation}
        \label{var-y}
        \Vark*{\frac{X_n}{\varpi(\ln n) n^β}}
        \longrightarrow \Var(Y) =
        \frac{\Var\kl*{\sum_{i=1}^{b}	V_i^\beta}}{1-\Ek*{\sum_{i=1}^{b}V_i^{2\beta}}}
    \end{equation}
    and $Y=0$ a.s.\ if and only if $\sum_{i=1}^{b} V_i^\beta = 1$ a.s. Note that this is always the case for split trees.
\end{theorem}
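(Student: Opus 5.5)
The plan is to apply the contraction method (Rösler/Neininger–Rüschendorf) in the $\ell_2$ setting to the distributional recursion satisfied by $X_n$. Conditioning on the root's split vector $\V$ and the child cardinalities $(n_1,\dots,n_b)$, we have for $n>s$
\begin{equation*}
  X_n \overset d= \phi(T_n) + \sum_{i=1}^b X^{(i)}_{n_i},
\end{equation*}
where the $X^{(i)}_{\cdot}$ are independent copies of the process $(X_m)_m$, independent of $(\V,n_1,\dots,n_b)$. Write $\mu_n := \E X_n = \varpi(\ln n)n^\beta + o(n^\beta)$ by \Cref{thm:mean}, and normalize $Y_n := (X_n-\mu_n)/(n^\beta \varpi(\ln n))$, which makes sense since $\varpi$ is bounded away from zero. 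Then
\begin{equation*}
  Y_n \overset d= \sum_{i=1}^b A^{(n)}_i Y^{(i)}_{n_i} + b^{(n)},
  \qquad
  A^{(n)}_i = \frac{n_i^\beta\varpi(\ln n_i)}{n^\beta \varpi(\ln n)},
\end{equation*}
with toll term
\begin{equation*}
  b^{(n)} = \frac{\phi(T_n) - \mu_n + \sum_i \mu_{n_i}}{n^\beta\varpi(\ln n)}.
\end{equation*}

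The first step is to identify the limits of the coefficients in $L_2$. By the law of large numbers for the multinomial, $n_i/n \to V_i$ in $L_p$, and the perturbations $s_0,s_1$ are $O(1)$. The key point for the $\varpi$ ratios is that $\ln n_i = \ln n + \ln V_i + o(1)$. In the lattice case $\ln V_i\in d\ZZ$ whenever $V_i>0$, so by $d$-periodicity and continuity $\varpi(\ln n_i)/\varpi(\ln n)\to 1$ on $\{V_i>0\}$. The event $V_i=0$ is harmless because there $n_i$ stays bounded and $n_i^\beta/n^\beta\to0$. Hence $A^{(n)}_i\to V_i^\beta$ in $L_2$. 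For the toll term, $\phi(T_n)/n^\beta\to0$ in $L_2$ by the moment assumption. Also
\begin{equation*}
  \frac{\sum_i \mu_{n_i} - \mu_n}{n^\beta \varpi(\ln n)} \to \sum_i V_i^\beta - 1
\end{equation*}
in $L_2$, using \Cref{thm:mean} together with the same periodicity argument and domination ($\mu_m = O(m^\beta)$, uniformly). So $b^{(n)}\to \sum_i V_i^\beta - 1$. This yields the limiting fixed-point equation $Y \overset d= \sum_i V_i^\beta Y^{(i)} + \sum_i V_i^\beta - 1$, which is exactly \eqref{y-recursion}.

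The second step is to verify the contraction condition $\E\sum_i V_i^{2\beta} < 1$. This holds because $V_i^{2\beta}\le V_i^\beta$ with strict inequality unless $V_i\in\{0,1\}$, and $\E\sum V_i^\beta=1$ by \eqref{def:beta}. Equality would force a single $V_i=1$ a.s., which is excluded by $\Pk{V_{(1)}=1}<1$ together with supercriticality. We also need the technical conditions: $\E\,\mathbf 1(n_i\le \ell)(A^{(n)}_i)^2\to0$ for each fixed $\ell$, and $\E X_n^2<\infty$. The latter follows by induction from the moment bound on $\phi$. The general contraction theorem (Neininger's $\ell_2$ version) then gives $Y_n\to Y$ in $\ell_2$, with $Y$ the unique centered square-integrable fixed point. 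The main obstacle is this convergence step when the limit may be degenerate, i.e.\ for a split tree where $Y=0$. That case is still covered by the theorem as a limit in $\ell_2$, but one must make sure the proof does not require a positive variance lower bound for $Y_n$. Besides that, the uniformity in the $\varpi$-ratio argument, including the nodes with small $n_i$, will need a careful truncation of the form $n_i\le \varepsilon n$ versus $n_i>\varepsilon n$.

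Finally, the variance formula follows by taking variances in \eqref{y-recursion}. Conditioning on $\V$ gives
\begin{equation*}
  \Var(Y) = \E\sum_i V_i^{2\beta}\,\Var(Y) + \Var\Bigl(\sum_i V_i^\beta\Bigr),
\end{equation*}
which rearranges to \eqref{var-y}. Here $\ell_2$ convergence gives convergence of the second moments, and since $\E Y_n = 0$ this is convergence of the variances. The formula shows $Y=0$ a.s.\ iff $\sum V_i^\beta$ is a.s.\ constant. Since its mean is $1$, that is the same as $\sum V_i^\beta=1$ a.s. For split trees this holds with $\beta=1$.
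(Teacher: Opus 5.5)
Your approach is the paper's: rescale by $n^\beta\varpi(\ln n)$, show $L_2$ convergence of the coefficients and of the toll term, check $\Ek*{\sum_i V_i^{2\beta}}<1$, and invoke the $\ell_2$ contraction theorem. Those verifications are fine, including your treatment of $\{V_i=0\}$ and of the variance formula. The gap is in the remaining technical condition, which fails in one case.

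\textbf{The missing condition.} When $s_0=s_1=0$, one has $\Pk{n_i=n}=\Ek{V_i^n}>0$, so a child can receive all $n$ balls. The version of the contraction theorem that allows this (Neininger 2001, Theorem~4.1, which the paper uses) requires
\[
\Ek*{\eins\bigl(n_i\le \ell \text{ or } n_i=n\bigr)\,\bigl(A^{(n)}_i\bigr)^2}\longrightarrow 0\qquad\text{for every fixed } \ell .
\]
You only listed the part with $n_i\le\ell$. The extra part is harmless in two cases. If $s_0+s_1>0$, then $n_i<n$ always. If $s_0=s_1=0$ and $\Pk{V_{(1)}=1}=0$, the extra part equals $\Ek{V_i^n}\to0$.

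\textbf{Where it fails.} The hypotheses only require $\Pk{V_{(1)}=1}<1$. Suppose $s_0=s_1=0$ and $\Pk{V_i=1}>0$ for some $i$. On $\{V_i=1\}$ all $n$ balls pass to child $i$, so $n_i=n$ and $A^{(n)}_i=1$. The expectation above is then at least $\Pk{V_i=1}$ for every $n$, so the theorem cannot be applied. Your plain induction for $\Ek{X_n^2}<\infty$ breaks for the same reason, since $X_n$ appears on both sides of the recursion.

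\textbf{How the paper handles it.} The paper treats this case separately by collapsing every maximal chain of nodes in which one child receives the full mass, i.e.\ by conditioning on the first generation with $V_{(1)}<1$.
\begin{itemize}
\item This gives a recursion of the same form with split vector $\widetilde\V$, distributed as $\V$ conditioned on $\{V_{(1)}<1\}$. It has the same $\beta$, because $\sum_iV_i^\beta=1$ on $\{V_{(1)}=1\}$.
\item The new toll function is $\widetilde\phi(T_n)=\sum_{v}\eins(M^n_v=n)\,\phi(T_n^v)$. At most one node per depth has full mass, so Minkowski's inequality gives
\[
\|\widetilde\phi(T_n)\|_{L_2}\le\sum_{k\ge0}\Pk{V_{(1)}=1}^{k/2}\,\|\phi(T_n)\|_{L_2}=\Ok{n^{\beta-\delta}} .
\]
\item The case already proved then yields a limit $\widetilde Y$.
\item On $\{V_{(1)}=1\}$ the fixed-point equation $Y+1\overset d=\sum_iV_i^\beta(Y^{(i)}+1)$ degenerates to $Y+1\overset d= Y+1$. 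Hence the equations for $\V$ and $\widetilde\V$ have the same centered square-integrable solution, and $\widetilde Y\overset d=Y$.
\end{itemize}

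Alternatively, you could rerun the contraction estimate for $\ell_2(Y_n,Y)$ and move the $\{n_i=n\}$ contribution to the left-hand side. The effective contraction factor $\bigl(\Ek*{\sum_iV_i^{2\beta}}-\Pk{V_{(1)}=1}\bigr)/\bigl(1-\Pk{V_{(1)}=1}\bigr)$ is still below $1$. Either way, this case needs an explicit extra step that your proposal does not contain.
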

In particular, if $Y=0$ a.s., we have a weak law of large numbers:

\begin{corollary}[Weak law of large numbers]\label{cor:lln}
    Assume that $\phi$ fulfills the conditions of \Cref{thm:limit} and that $\sum_{i=1}^{b} V_i^\beta = 1$ a.s. Then
    \begin{equation}
        \frac{X_n - \Ek*{X_n}}{n^β\varpi(\ln n)} \overset{L_2}\longrightarrow 0.
    \end{equation}
\end{corollary}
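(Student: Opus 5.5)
The corollary should follow directly from \Cref{thm:limit}. The plan is to identify the limit $Y$ under the extra hypothesis and then upgrade $\ell_2$ convergence to a point mass into $L_2$ convergence.

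First, assume $\sum_{i=1}^b V_i^\beta = 1$ a.s. Then the numerator in \eqref{var-y} is $\Var\bigl(\sum_i V_i^\beta\bigr)=0$. The denominator is strictly positive, because $\Ek*{\sum_i V_i^{2\beta}} < \Ek*{\sum_i V_i^\beta} = 1$: indeed $V_i^{2\beta}\le V_i^\beta$, and the inequality is strict with positive probability since $\Pk{V_{(1)}=1}<1$. Hence $\Var(Y)=0$, and since $Y$ is centered, $Y=0$ a.s. This is also the ``if'' direction stated at the end of \Cref{thm:limit}. Alternatively, one can check directly that $Y\equiv 0$ satisfies \eqref{y-recursion}, because $\sum_i V_i^\beta(0+1)=1$, and then invoke the uniqueness of the centered square-integrable solution.

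Second, \Cref{thm:limit} gives $Z_n := \frac{X_n-\Ek*{X_n}}{n^\beta\varpi(\ln n)} \to 0$ in $\ell_2$. For the degenerate limit $0$, the only coupling is with the constant $0$, so the $\ell_2$ distance equals $\Ek*{Z_n^2}^{1/2}$. Therefore $\ell_2$ convergence to $0$ is exactly $\Ek*{Z_n^2}\to 0$, which is the claimed $L_2$ convergence. Equivalently, \eqref{var-y} yields $\Var(Z_n)\to 0$, and $Z_n$ is centered.

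The only step needing care is the strict positivity of $1-\Ek*{\sum_i V_i^{2\beta}}$, which comes from the nontriviality assumption on $\V$. The remaining steps are immediate.
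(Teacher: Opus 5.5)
Your proposal is correct and follows the paper's route. The paper gives no separate argument: it reads the corollary off Theorem~\ref{thm:limit}, where $\sum_{i=1}^b V_i^\beta=1$ a.s.\ forces $Y=0$ a.s.\ by \eqref{var-y}, and $\ell_2$ convergence to the constant $0$ is exactly $L_2$ convergence. Your justification that $\Ek*{\sum_i V_i^{2\beta}}<1$ also holds here, since if all $V_i\in\{0,1\}$ a.s., the hypothesis would force $V_{(1)}=1$ a.s.
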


\begin{remark}
    In the case of \Cref{cor:lln}, we conjecture that $X_n$ is typically asymptotically normal (possibly with a $d$-periodic variance), as this is true for many split tree models we know about. 
    %This might, however, entail finding more than the first order of $\Ek{X_n}$:
    For $m$-ary search trees with $m>26$ it is known that $X_n(\phi)-\varpi n$ is not asymptotically normal for general $\phi$, cf.\ \cite{chern_phase_2001}. We plan to study this in a second paper.
\end{remark}

\begin{remark}\label{rem:multiple-convergence}
    The convergence \eqref{limit:convergence} in \Cref{thm:limit} also holds for multiple toll functions at once: If $\phi^{(1)},\dots, \phi^{(k)}$ are toll functions fulfilling the conditions in \Cref{thm:limit}, then $X_n(\phi^{(i)})$ all converge to the same limit $Y$ when scaled correctly, i.e.
    \begin{equation}
        \kl*{\frac{X_n(\phi^{(i)}) - \Ek*{X_n(\phi^{(i)})}}{n^β\varpi(\ln n)}}_{i=1}^k \overset{\ell_2}\longrightarrow (Y)_{i=1}^k.
    \end{equation}
\end{remark}

The limit function $\varpi$ is the easiest to calculate if $s_0=s_1=0$, because then $n'_v$ and $n_v$ are the same as long as $n_v>s$. In this case, we have for the size of fringe trees:

\begin{theorem}\label{thm:constants}
    Assume that $s_0=s_1=0$ and define for $k≥1$ and $z\in \CC \setminus\mg{-k}$ with $\Re z ≥ -β$
    \begin{equation}\label{constants:fk}
        M_k(z) := \frac{\Gamma(k+z)}{\mu k!} - \frac{b}{\mu}\sum_{j=0}^{s-k} \Ek*{V^k(1-V)^{j}}\frac{\Gamma(k+j+z)}{k!j!}.
    \end{equation}
    Note that the sum is empty if $k>s$.
    If $k=β=1$, we define the value at $-1$ to instead be
    \begin{equation}\label{constants:f1}
        M_1(-1) := 1 - \frac{b}{\mu}\sum_{j=1}^{s-1} \frac{\Ek*{V(1-V)^{j}}}{j}.
    \end{equation}
    In the nonlattice case, the constant $\varpi[\fringephi]$ from \Cref{thm:mean} for the toll function $\fringephi$ counting fringe trees of size $k$ (see \eqref{def:phi-k}) is given by
    \begin{equation}\label{constants:nonlattice}
        \varpi\br{\fringephi} = M_k(-β)
    \end{equation}
    and in the lattice case, the $d$-periodic function $\varpi[\fringephi]$ has the Fourier series
    \begin{equation}\label{constants:lattice}
        \varpi\br{\fringephi}(x) = \sum_{m\in \ZZ} e^{{2\pi imx}/d} M_k\kl*{-β-\frac{2\pi im}d}.
    \end{equation}
    The Fourier coefficients go to $0$ exponentially fast, implying that $\varpi\br{\fringephi}$ is smooth (infinitely differentiable).
\end{theorem}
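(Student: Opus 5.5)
Since $s_0=s_1=0$, the counts $n_v$ are exactly multinomial/binomial given the split vectors as long as the parent holds more than $s$ balls. So the plan is to Poissonize and then use a Mellin transform.

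\textbf{Step 1: Poissonize.} Replace $n$ by a Poisson variable $\poissonn$. By the splitting property, conditional on the masses, $n_v$ is Poisson with mean $\lambda M^1_v$, independently across children. Equivalently, the number of balls routed to $v$ is Poisson with mean $\lambda M^1_v$ and survives to $v$ exactly when the parent $u$ of $v$ has $n_u>s$. Write $v=ui$ and $\Lambda:=\lambda M^1_u$. Then $n_v=k$ and $n_u>s$ means: the parent receives a Poisson$(\Lambda)$ number of balls; $k$ of them go to child $i$; and the parent total exceeds $s$.

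Using the complement $\{n_u\le s\}$, the probability of this event is
\[
e^{-\Lambda}\frac{(\Lambda V_i)^k}{k!}-\sum_{j=0}^{s-k}e^{-\Lambda}\frac{\Lambda^{k+j}V_i^k(1-V_i)^j}{k!\,j!}.
\]
Here $j$ counts the balls that went to the other children (or vanished, in the fractional case). The root contributes $e^{-\lambda}\lambda^k/k!$ on its own.

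Summing over $u\in[b]^*$ and $i\in[b]$ gives
\[
\E X_{\poissonn}(\fringephi)=F(\lambda),\qquad F(\lambda)=\E\sum_u g(\lambda M_u^1),
\]
where $g(x)=\E\sum_i\bigl[\ldots\bigr]$ is the bracket above with $\Lambda=x$, plus the root term.

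\textbf{Step 2: Mellin transform.} The Mellin transform factors as
\[
\mellin{F}(z)=\Bigl(\sum_{m\ge0}\bigl(\E\textstyle\sum_i V_i^{-z}\bigr)^m\Bigr)\mellin{g}(z)=\frac{\mellin{g}(z)}{1-b\E V^{-z}},
\]
valid for $\Re(-z)>\beta$, i.e.\ $\Re z<-\beta$. Combining root and children terms,
\[
\mellin g(z)=\frac{\Gamma(k+z)}{k!}\bigl(1-b\E V^{-z}\bigr)+b\,\E V^{k}\frac{\Gamma(k+z)}{k!}V^{-z}\cdots
\]
After using $\E\sum_i V_i^{-z}$ to cancel terms, the transform should reduce to a form whose residue at $z=-\beta$ is $M_k(-\beta)$. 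Since $1-b\E V^{-z}$ has a simple zero at $z=-\beta$ with derivative $-\mu$, dividing by it produces the factor $1/\mu$. This explains the definition \eqref{constants:fk}, including the special formula \eqref{constants:f1} when $k=\beta=1$, where a pole of $\Gamma$ collides with $z=-\beta$ and $\Gamma(j)=(j-1)!$ gives the $1/j$ terms.

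I would verify the algebra directly: express $\E\sum_u g(\lambda M_u)$ through the renewal measure, and check that the numerator, evaluated at the zero, equals $\mu M_k(-\beta)$.

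\textbf{Step 3: Inversion and depoissonization.} Shift the inverse Mellin contour to the right of $-\beta$, i.e.\ to $\Re z=-\beta+\epsilon$. The poles crossed are:
\begin{itemize}
\item in the nonlattice case, only $z=-\beta$, since $|b\E V^{-z}|<1$ on that line away from the real point;
\item in the lattice case, the points $-\beta-2\pi i m/d$, since $V^{-z}$ is $2\pi i/d$-periodic in $z$.
\end{itemize}
This gives $F(\lambda)=\lambda^\beta\varpi(\ln\lambda)+O(\lambda^{\beta-\epsilon})$ with the claimed Fourier series.

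The exponential decay of the Fourier coefficients follows from Stirling: $\Gamma(k+z)$ decays like $e^{-\pi|\Im z|/2}$, while $|1-b\E V^{-z}|$ is periodic and bounded below on the lattice line away from its zeros. That decay is also what justifies moving the contour.

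Finally, depoissonize. Either apply \Cref{thm:mean}, which already gives $\E X_n=\varpi(\ln n)n^\beta+o(n^\beta)$ with a continuous $\varpi$, and note that Poisson smoothing of $n^\beta\varpi(\ln n)$ changes it only by $o(n^\beta)$ (using continuity of $\varpi$ and concentration of $\poissonn$); or use a standard analytic depoissonization lemma. Either way, $\varpi[\fringephi]$ is identified with the Poisson constant.

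\textbf{Main obstacle.} The hard part is the contour shift. It needs decay of $\mellin g$ along vertical lines, which is available only in a strip where $\Gamma(k+j+z)$ is analytic, i.e.\ $\Re z>-k$; this is delicate when $k<\beta$, which can only happen for $k=\beta=1$. It also needs control of the zeros of $1-b\E V^{-z}$ in the strip $-\beta<\Re z<-\beta+\epsilon$. I would handle this by working in a thin strip and, in the nonlattice case, using Riemann–Lebesgue to bound $|b\E V^{-z}|$ away from $1$ on the line.
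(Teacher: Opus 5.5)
Your route (Mellin transform of the whole Poissonized mean, then residues via a contour shift) differs from the paper's. The paper writes $Z(q)=\E\,\widetilde X_{e^q}$ as the solution of the renewal equation $Z=G+b\P_{-\ln V}*Z$ and applies the key renewal theorem, so it only ever needs the values $\mathcal M f(-\beta-2\pi i m/d)$. The difference matters, because your Step~3 fails in the nonlattice case.

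The fact that $|b\E[V^{-z}]|<1$ on $\Re z=-\beta$, $z\neq-\beta$, says nothing about the strip $-\beta<\Re z<-\beta+\epsilon$, where $|b\E[V^{-z}]|$ can reach $b\E[V^{\beta-\epsilon}]>1$. Nor is the bound uniform as $|t|\to\infty$ for $z=-\beta+it$. Take a trie with deterministic split vector $(p,q)$ and $\ln p/\ln q\notin\QQ$ (nonlattice, $s_0=s_1=0$). Dirichlet's simultaneous approximation and Rouch\'e give zeros of $1-p^{-z}-q^{-z}$ with real part in $(-1,-1+\epsilon)$ for every $\epsilon>0$. For $k>s$ the numerator $\Gamma(k+z)/k!$ never vanishes, so these are genuine poles of $\mathcal M F$, and hence $F(\lambda)-\varpi\lambda$ is not $O(\lambda^{1-\epsilon})$ for any $\epsilon>0$. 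So the expansion with remainder $O(\lambda^{\beta-\epsilon})$ is false in general. Riemann--Lebesgue cannot supply the missing zero-free strip, since it only helps when the law of $\ln V$ has an absolutely continuous part, which is not assumed.

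In the lattice case your shift is fine: $b\E[V^{-z}]$ is a power series in $e^{dz}$ converging for $|e^{dz}|<1$. Compactness of a period strip then leaves only the zeros $-\beta-2\pi i m/d$ in a thin strip.

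Since you already use Theorem~\ref{thm:mean} to depoissonize, the simplest repair is an Abelian argument rather than a contour shift. From $F(\lambda)=\varpi\lambda^\beta+o(\lambda^\beta)$ you get $(-\beta-\sigma)\mathcal M F(\sigma)\to\varpi$ as $\sigma\uparrow-\beta$. Since $1-b\E[V^{-\sigma}]=\mu(-\beta-\sigma)+o(-\beta-\sigma)$, the same limit equals $M_k(-\beta)$. In the lattice case, approach $-\beta-2\pi i m/d$ horizontally; there $b\E[V^{-z}]=b\E[V^{-\sigma}]$ because $\ln V\in d\ZZ$. This identifies each Fourier coefficient of the continuous function $\varpi$, and absolute summability then gives \eqref{constants:lattice}.

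Two formulas in Steps~1--2 also need fixing before any of this applies.

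First, the first term of your bracket must be $\P(n'_{ui}=k)=e^{-\Lambda V_i}(\Lambda V_i)^k/k!$. What you wrote, $e^{-\Lambda}(\Lambda V_i)^k/k!$, is the $j=0$ summand, so your bracket is $\le 0$.

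Second, the root term must stay outside the sum over $u$: $F(\lambda)=e^{-\lambda}\lambda^k/k!+\E\sum_u h(\lambda M^1_u)$, where $h$ is the corrected bracket summed over $i$ and averaged over $\V$. Then
\[
\mathcal M h(z)=b\,\E[V^{-z}]\frac{\Gamma(k+z)}{k!}-b\sum_{j=0}^{s-k}\E\bigl[V^k(1-V)^j\bigr]\frac{\Gamma(k+j+z)}{k!\,j!},
\]
which gives $\mathcal M F(z)=\mu M_k(z)/(1-b\E[V^{-z}])$ on $-k<\Re z<-\beta$. Conditioning only on $n'_u>s$ is legitimate because $n'_{ui}\le n'_u$ when $s_0=s_1=0$. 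This is the same count as the paper's $\delta_k'$ on $T'_n$.

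Written this way, the case $k=\beta=1$ is not delicate. Since $h(x)=O(x^2)$ as $x\to0$, $\E\sum_u h(\lambda M^1_u)$ has a Mellin transform on $-2<\Re z<-1$, and the root term is $o(\lambda)$. The leading $1$ in \eqref{constants:f1} is then $\mu^{-1}\lim_{z\to-1}b\,\Gamma(1+z)\E[V^{-z}-V]=\mu^{-1}b\E[-V\ln V]=1$. It does not come from $\mu M_1(z)$ itself: since $b\E V=1$, the $\Gamma(1+z)$ terms in \eqref{constants:fk} cancel identically. Your pole-collision remark therefore only accounts for the $1/j$ terms.
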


\begin{remark}\label{rem:split-fringe}
An interesting fact is that \eqref{constants:fk} implies that the
asymptotic distribution of fringe tree sizes larger than $s$ does not
depend on $V$ other than through $µ$ and $β$. 

In the case of split trees ($β=1$), 
\eqref{constants:fk} and \eqref{constants:nonlattice} give for $k>s$ the limit
$\frac1{µk(k-1)}$ in the  nonlattice case,  
with oscillations given by \eqref{constants:lattice} in the lattice case,
as was known before for tries
\cite{Janson_2012, janson:trie}. 
The correction terms for $k≤s$ can also be found in
\cite[Section~6]{Janson_2012} and \cite[Subsection~4.7]{janson:trie}. 
We note that in these results in \cite{Janson_2012,janson:trie},
the function there corresponding to $M_k(z)$ in \eqref{constants:fk}
has an additional factor $b\,\E [V^{-z}]$ in the first term;
this factor equals 1 for the arguments
$-1-\frac{2\pi im}d$ in \eqref{constants:nonlattice} and
\eqref{constants:lattice} and thus makes no difference for the result except
in the case $k=1$, where it combines with the pole of $\Gamma(k+z)$ at $z=-1$
which yields our special formula \eqref{constants:f1} for this case.
In our (more general) analysis below, \eqref{constants:f1} is instead
obtained by a special argument for this case; we thus obtain the same result
in different ways.
\end{remark}

The following proposition shows that the limit function $\varpi$ for a general toll function can be calculated from $\varpi[\fringephi]$.
\begin{proposition}\label{prop:sum}
    For a general toll function $\phi$ with $\phi_n = \Ok*{n^{β-δ}}$ for some $δ>0$, the limit function $\varpi\br\phi$ from \Cref{thm:mean} is
    \begin{equation}\label{constants:general}
        \varpi\br{\phi} = \sum_{k=1}^\infty \varpi\br{\fringephi[k]}\phi_k
    \end{equation}
    In the lattice case, where $\varpi\br\phi$ is a function, this series converges uniformly.
\end{proposition}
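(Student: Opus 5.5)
Since $\Ek{X_n(\phi)}$ depends on $\phi$ only through the sequence $(\phi_k)$ by \eqref{def:phin}, I will work with the decomposition $\phi_n = \sum_{k\ge1}\phi_k\eins(n=k)$. This gives
\[
\Ek*{X_n(\phi)} = \sum_{k=1}^\infty \phi_k\,\Ek*{X_n(\fringephi)} ,
\]
which is in fact a finite sum for each fixed $n$, because $n_v\le n$. The plan is to cut the series at a level $K$. For $k\le K$ I will apply \Cref{thm:mean} to each $\fringephi$ separately, using linearity, and then show that the tail $k>K$ contributes at most $\varepsilon n^\beta$ uniformly in $n$ once $K$ is large. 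Applying the same tail bound to the differences of the limit functions will show that $\sum_k \varpi[\fringephi]\phi_k$ converges uniformly. Uniqueness of the limit function in \Cref{thm:mean} then identifies it with $\varpi[\phi]$: a $d$-periodic continuous function $g$ with $g(\ln n)n^\beta=o(n^\beta)$ must vanish, since $\ln n$ comes arbitrarily close to every residue mod $d$, and in the nonlattice case $g$ is constant.

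The key estimate, which I expect to be the main obstacle, is a uniform bound of the form
\[
\Ek*{X_n(\fringephi)} = \E\#\mg{v: n_v=k} \le C\,k^{-1-\beta}\,n^\beta\quad\text{for all } n,k,
\]
or at least the weaker statement $\le C k^{-\beta-\eta}n^\beta$ for some $\eta>0$. With this in hand, the hypothesis $|\phi_k|\le Ck^{\beta-\delta}$ makes $\sum_{k>K}|\phi_k|\,\Ek{X_n(\fringephi)}\le C n^\beta\sum_{k>K}k^{-1-\delta}$, which tends to $0$ uniformly in $n$. The same bound passed to the limit gives $\sup_x|\varpi[\fringephi](x)|\le Ck^{-1-\beta}$, and this yields the uniform convergence of \eqref{constants:general}.

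To prove the uniform bound I intend to reuse the machinery behind \Cref{thm:mean}. There, $\Ek{X_n(\phi)}$ is written as a renewal-type sum over the mass tree $M_v^n$. The number of nodes with mass near $x$ has expectation comparable to $(n/x)^\beta$ times a renewal density, by the change of measure \eqref{def:v-bias}, which turns $-\ln M_v$ along a $\beta$-weighted path into a random walk with steps $-\ln\widehat V$ of mean $\mu$. A node with $n_v=k$ must have $n'_v$ within $O(1)$ of $k$, using \eqref{eq:n''-upper} and the coupling \eqref{def:nss}. Given the mass $M_v^n=x$, the probability that a binomial variable with mean $x$ takes a value near $k$ is about $\min(1,x/k)^{k}e^{-x}\cdots$, and it is concentrated at $x\approx k$ with width $\sqrt k$.

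The harder part is to count nodes, not paths. The nodes $v$ with $n_v = k$ form an antichain: two such nodes cannot be ancestor and descendant unless $n_v$ stays equal to $k$ along the path, which only has bounded-length runs. Hence each such $v$ is the first node on its path whose $n$ drops to $k$ or below. Summing over such first-passage nodes, and using that the expected number of nodes with $n_v > k$ and a child with $n \le k$ is $O((n/k)^\beta)$ (a standard estimate of the form $\E\#\mg{v:M_v^n\ge k}=O((n/k)^\beta)$, from the renewal bound), gives $O((n/k)^\beta)$, i.e. $k^{-\beta}$. That is not summable against $\phi_k\sim k^{\beta-\delta}$, so an extra factor $k^{-1}$ is needed. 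It comes from the local probability $\Pk{n_v=k\mid\text{parent has } n>k}=O(1/k)$: conditional on the parent's data, $n_v$ is binomial plus a bounded shift, and it can hit the single value $k$ with probability $O(1/\sqrt{k})$ in general. Reaching $O(1/k)$ requires the first-passage overshoot argument, which shows that the "undershoot" below $k$ is spread over a range of order $k$ in the mass scale. I will try to prove this through the density of the first-passage mass, and if that is delicate, fall back on a smoothing argument that compares $\E\#\mg{v:n_v=k}$ with $\frac1k\,\E\#\mg{v: k\le n_v<2k}$.
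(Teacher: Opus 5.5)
Your skeleton matches the paper's: truncate at $K$, apply linearity and \Cref{thm:mean} to the finitely many $\fringephi[k]$ with $k\le K$, control the tail uniformly in $n$, and identify the limit via density of $\ln n \bmod d$. The gap is the estimate you hang the tail on. It is false, so the argument cannot be completed as planned.

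A bound $\Ek{X_n(\fringephi)}\le Ck^{-\beta-\eta}n^\beta$ for all $n,k$ already fails at $n=k$: the root alone gives $\Ek{X_k(\fringephi)}\ge1$, while the right-hand side is $Ck^{-\eta}$. Also, with $\eta\le 1-\delta$ it would not be summable against $|\phi_k|\asymp k^{\beta-\delta}$ anyway.

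More seriously, in the lattice case it fails even as $n\to\infty$. Take the symmetric binary trie: $b=2$, $V_1=V_2=\frac12$, $s=1$, $s_0=s_1=0$, $\beta=1$. The masses are deterministic, and for $k>s$ a node at depth $j$ has $n_v=k$ exactly when $n'_v=k$, where $n'_v\sim\Bin(n,2^{-j})$. So for $n=k2^j$,
\[
\Ek*{X_n(\fringephi)}\ \ge\ 2^j\,\Pk*{\Bin(k2^j,2^{-j})=k}=\frac nk\bigl(\Pk*{\operatorname{Poi}(k)=k}+o(1)\bigr)\qquad(j\to\infty).
\]
Hence $\varpi[\fringephi](\ln k)\ge \frac1k\Pk{\operatorname{Poi}(k)=k}\sim(2\pi)^{-1/2}k^{-3/2}$, not $O(k^{-2})$. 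When masses are deterministic there is no overshoot spreading: the probability of hitting the single value $k$ really is of order $k^{-1/2}$. So neither the first-passage density argument nor the smoothing comparison with $\frac1k\E\#\{v:k\le n_v<2k\}$ can produce the extra factor $k^{-1}$.

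It follows that $\sum_k|\phi_k|\sup_x\varpi[\fringephi](x)=\infty$ for $|\phi_k|\asymp k^{1-\delta}$ with $\delta\le\frac12$. The series \eqref{constants:general} does converge uniformly, but not by any termwise (M-test) bound. The peaks of $\varpi[\fringephi]$ sit near $x\equiv\ln k\pmod d$, at different phases for different $k$, and only average out when summed over $k$.

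The missing idea is to bound the tail in aggregate rather than for each $k$:
\[
\sum_{k>K}|\phi_k|\,\Ek*{X_n(\fringephi)}\le C\,\E\sum_{v}n_v^{\beta-\delta}\eins(n_v>K)=O\bigl(n^\beta K^{-\delta}\bigr)\quad\text{uniformly in }n .
\]
This is \Cref{lem:tail}. It needs no local information about $n_v$, only counts of nodes by the size of $n_v$. The paper proves it from a renewal bound on $\E\sum_{v\in P}(M^n_v)^{\beta-\delta}$ plus a Chebyshev estimate for nodes below $P$. Equivalently, you can use a dyadic decomposition with $\E\#\{v:n_v>m\}=O((n/m)^\beta)$, which follows from the bound $\E\#\{v:M^n_v\ge m\}=O((n/m)^\beta)$ that you already quote.

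Since $\varpi[\fringephi]\ge0$, passing to the limit (by continuity and density of $\ln n \bmod d$) gives $\sup_x\sum_{k>K}|\phi_k|\varpi[\fringephi](x)=O(K^{-\delta})$, which is the uniform convergence. In the paper this is the uniform Cauchy property of $\varpi_B=\sum_{k\le 2B+s_1}\phi_k\varpi[\fringephi]$ established after \eqref{ot:approx}.
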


Even if $\varpi\br\phi$ is given as a uniformly converging limit of smooth
functions, this does not mean that $\varpi\br\phi$ itself is smooth. We can,
however, show that it is Hölder-continuous.
\begin{proposition}\label{prop:holder}
    Assume we are in the lattice case. If $\phi_n = \Ok{n^{β-δ}}$ for some $δ>0$, then
    $\varpi[\phi]$ is Hölder-$\alpha$-continuous for every exponent $\alpha < 2δ$ with $\alpha≤1$; 
    furthermore, if $\delta>\frac12$ then $\varpi[\phi]$ is
    continuously differentiable.

    In particular, for any $\delta>0$,
    the Fourier series of $\varpi$ converges uniformly.
    Furthermore, if $δ>\frac14$, the Fourier series of $\varpi$ converges absolutely.
\end{proposition}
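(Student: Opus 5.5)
The idea is to write $\varpi[\phi]$ as a uniformly convergent series over the toll functions $\fringephi[k]$, using \Cref{prop:sum}, and then read off smoothness from how fast the $k$-th term decays and how fast its derivative grows. Concretely, let $\phi_n = \Ok{n^{β-δ}}$ and write
\begin{equation*}
\varpi[\phi](x) = \sum_{k\ge1} \phi_k\, \varpi[\fringephi[k]](x).
\end{equation*}
We need two estimates on the individual terms:
\begin{enumerate}[label=(\roman*)]
\item a sup bound $\sup_x |\varpi[\fringephi[k]](x)| = \Ok{k^{-β-1}}$, summed against $\phi_k = \Ok{k^{β-δ}}$;
\item a derivative bound $\sup_x |\varpi[\fringephi[k]]'(x)| = \Ok{k^{-β-1+\gamma}}$ for some $\gamma$.
\end{enumerate}
Heuristically, $\varpi[\fringephi[k]](\ln n)n^β$ is the expected number of nodes with about $k$ balls. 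These nodes are governed by masses $M_v^n \approx k$, i.e.\ by the position of $\ln n - \ln k$ in the lattice. So $\varpi[\fringephi[k]](x) \approx k^{-β}\,g(x-\ln k)$ for a fixed periodic profile $g$ smoothed on scale $1/\sqrt k$, since a binomial with mean $k$ has relative spread $k^{-1/2}$ in log scale.

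For (i), use Stirling on the Fourier coefficients $M_k(-β-2\pi i m/d)$ from \Cref{thm:constants}. For $k>s$ these are $\Gamma(k-β-2\pi i m/d)/(\mu k!)$. In the general case (where $s_0,s_1$ may be nonzero), I would instead use the mass-comparison coupling behind \Cref{thm:mean}, bounding the contribution of nodes with $n_v=k$ by nodes with mass in $[k/2,2k]$ plus a Chernoff tail. This gives $\Ok{k^{-1-β}}$ in sup norm.

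For (ii), the Gamma ratio gives $|M_k(-β-2\pi i m/d)| \approx k^{-1-β} e^{-c m^2/k}$ for $|m|\lesssim k$. The $m$-th term of the derivative carries an extra factor $m$. Summing over $m$ gives $\sum_m |m| k^{-1-β}e^{-cm^2/k} \asymp k^{-β}$, so $\gamma = 1$ (smoothing scale $1/\sqrt k$ in $x$ costs a factor $\sqrt k$ per derivative, but the Fourier sum contributes $\sqrt k$ terms).

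To get Hölder continuity, split the series at $K \approx |h|^{-1}$, using the sup bound of order $k^{-1-δ}$ for the terms with $k>K$. I expect this first attempt to yield exponent $δ$, not $2δ$. The improvement to $2δ$ should come from a finer cutoff $K \approx h^{-2}$: nodes with $k$ balls only resolve $x$ at scale $k^{-1/2}$, so each $\varpi[\fringephi[k]]$ is Lipschitz with constant $k^{-1-β}\sqrt k$ rather than $k^{-β}$. With the derivative bound $k^{-1/2-δ}$ after weighting by $\phi_k$:
\begin{itemize}
\item the terms $k\le K$ contribute $h\sum_{k\le K}k^{-1/2-δ}\approx h K^{1/2-δ}$;
\item the terms $k>K$ contribute $K^{-δ}$.
\end{itemize}
Balancing at $K=h^{-2}$ gives $h^{2δ}$. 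If $δ>\frac12$, the derivative series converges uniformly, giving continuous differentiability. The key estimate is the sharp Fourier-coefficient bound $|M_k(-β-2\pi i m/d)|\lesssim k^{-1-β}e^{-cm^2/k}$, together with its analogue for $s_0+s_1>0$; proving that analogue via the coupling \eqref{def:nss} is the main obstacle.

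For the Fourier series, uniform convergence follows from Hölder continuity via Dini–Lipschitz. Absolute convergence for $δ>\frac14$ follows from Bernstein's theorem, since the Hölder exponent $2δ>\frac12$.
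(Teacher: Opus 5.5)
\textbf{Estimate (i) fails.} Your estimate (i) is false in the lattice case, which is the only case the proposition concerns, and the term-by-term strategy fails with it.

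For $k>s$ and $s_0=s_1=0$, \eqref{lattice-sum} shows that $\varpi[\delta_k]$ is the $d$-periodization of $q\mapsto c\,e^{-\beta q}e^{-e^q}e^{kq}/k!$. This is a bump around $q=\ln k$ of total mass $\asymp k^{-\beta-1}$ and width $\asymp k^{-1/2}$. Hence
\[
\sup_x\varpi[\delta_k](x)\asymp k^{-\beta-1/2}, \qquad \sup_x|\varpi[\delta_k]'(x)|\asymp k^{-\beta}.
\]
The value $k^{-\beta-1}$ is only the mean $\varpi_0[\delta_k]$. In Fourier terms, about $\sqrt k$ coefficients $M_k(-\beta-2\pi i m/d)$ have modulus $\asymp k^{-\beta-1}$, and their phases align at $x\equiv\ln k$. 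Stirling bounds each coefficient, not the sup norm. Your own computation in (ii) already gives the Lipschitz constant $k^{-\beta}$, which contradicts your later claim of $k^{-\beta-1/2}$.

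With the correct per-term bounds and weights $|\phi_k|=O(k^{\beta-\delta})$, summing absolute values over $k$ breaks down:
\begin{itemize}
\item the tail $\sum_{k>K}k^{-1/2-\delta}$ diverges for $\delta\le\frac12$;
\item the head only gives $hK^{1-\delta}$;
\item the derivative bounds $\sum_k k^{-\delta}$ diverge for every $\delta\le1$.
\end{itemize}
So neither Hölder-$2\delta$ continuity nor $C^1$ for $\delta>\frac12$ follows. Your final numerology, $hK^{1/2-\delta}+K^{-\delta}$ balanced at $K=h^{-2}$, is right only because two factor-$\sqrt k$ errors cancel.

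\textbf{The missing idea is localization.} At a fixed $x$, only the $\asymp\sqrt k$ indices per scale with $\ln k\in x+d\mathbb Z+O(k^{-1/2})$ contribute. You therefore have to sum over $k$ pointwise before taking suprema.

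The paper does this by never decomposing $\phi$ into the $\delta_k$:
\begin{itemize}
\item It writes $\varpi(x)=\frac d\mu\sum_{u\in\mathbb Z}e^{-\beta(x+du)}f(e^{x+du})$, where $f$ is the Poisson transform of the whole toll function.
\item It uses $|f(\lambda)|=O(\lambda^{\beta-\delta})$ for real $\lambda\ge1$.
\item It applies a Cauchy estimate on the disc $|\lambda-x|\le\frac12\sqrt x$, where $|e^{-\lambda}|\le e^{1-|\lambda|}$, to get $|f'(x)|=O(x^{\beta-\delta-1/2})$. This single step encodes both the $\sqrt x$-scale smoothing and the localization.
\end{itemize}
For $g(q)=e^{-\beta q}\mu f(e^q)$ this yields $g=O(e^{-\delta q})$ and $g'=O(e^{(1/2-\delta)q})$ for $q\ge0$. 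The interpolation $\|g\|_{\mathcal H_\alpha}\le 2(\sup|g|)^{1-\alpha}(\sup|g'|)^{\alpha}$ on each period $[ud,(u+1)d]$ then gives a summable bound $e^{(\alpha/2-\delta)ud}$ exactly when $\alpha<2\delta$. The same bounds show that $\sum_u g'$ converges uniformly when $\delta>\frac12$. After reducing to $\phi_1=0$, the terms with $u\le0$ form an entire function.

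Finally, the case $s_0+s_1>0$ needs no new coupling estimate. \Cref{lem:approx} gives $\widehat\phi_n=\phi_n+O(n^{\beta-\delta})$, and \eqref{approx:lattice} reduces the problem to $\widehat\phi$ on the tree with $s_0=s_1=0$.
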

This bound is sharp if $δ≤β$ because of the following:
\begin{proposition}\label{prop:holder-ex}
    Assume we are in the lattice case.
    If $\varpi$ is a $d$-periodic, Hölder-$2\delta$-continuous function with exponent $2\delta ≤ 1$ such that $δ≤β$, then there exists a toll function $\phi$ with $\phi_n = \Ok{n^{β-δ}}$ such that $\varpi[\phi] = \varpi$.
    In particular, for every $δ≤\min(\frac12,β)$ there exists a toll function $\phi$ with $\phi_n = \Ok*{n^{β-δ}}$
    such that $\varpi[\phi]$ is not Hölder-$\alpha$-continuous 
for any exponent $\alpha>2δ$.
\end{proposition}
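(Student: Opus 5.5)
The plan is to build $\phi$ with $\phi_n$ depending only on $n$ (say $\phi(T_n)=\phi_n$ deterministic), and to use the representation of $\varpi[\phi]$ that comes out of the proof of \Cref{thm:mean}. In the lattice case that proof expresses $\E X_n$ through a renewal equation in $x=\ln n$, driven by the (lattice) distribution of $-\ln \widehat V$. It has the form
\begin{equation*}
\varpi[\phi](x)=\frac{d}{\mu}\sum_{j\in\ZZ} G(x+jd),
\end{equation*}
where $G(x)=e^{-\beta x}\,g(x)$ and $g(x)$ is the expected contribution of the toll at the root, plus the correction terms from the discrete splitting, for a tree of size about $e^x$. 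The first step is to check that $\phi\mapsto\varpi[\phi]$ is linear, which is clear from \Cref{prop:sum}. The second step is to find a family of "building block" tolls whose $\varpi$ is a well-localized bump in $x$ modulo $d$.

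For the building blocks I would take tolls of the form $\phi_n=\psi(\ln n)\,n^{\beta}$ for $n$ in a window $[e^{a},e^{a+d}]$ and zero elsewhere. Smoothed sums of these reproduce $\varpi$ directly. Concretely, I would set
\begin{equation*}
\phi_n=n^{\beta}\bigl(\varpi(\ln n)-\varpi(\ln n+c)\bigr)\cdot(\text{normalising factor}),
\end{equation*}
a telescoping or difference construction. The heuristic is that $\E X_n\approx\sum_v\phi_{n_v}$, and the number of nodes with $n_v\approx m$ is about $n^\beta m^{-\beta-1}\,dm$ times a lattice-periodic density. So the target is
\begin{equation*}
\phi_n = n^\beta\bigl(\varpi(\ln n)-\E\varpi(\ln n+\ln V)\bigr)
\end{equation*}
after size-biasing, so that the recursion $\E X_n=\phi_n+\sum_i\E X_{n_i}$ is solved exactly to first order by $\varpi(\ln n)n^\beta$.

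The key estimate is that $\phi_n=\Ok{n^{\beta-\delta}}$. This should follow from Hölder-$2\delta$ continuity together with concentration: $n_i$ given $V_i$ is binomial, with $|\ln n_i-\ln(nV_i)|\approx n^{-1/2}$ on the typical event. Then
\begin{equation*}
\bigl|\varpi(\ln n_i)-\varpi(\ln nV_i)\bigr|=\Ok{n^{-\delta}}.
\end{equation*}
The deterministic difference $\varpi(x)-\sum_i\E V_i^\beta\varpi(x+\ln V_i)$ vanishes identically in the lattice case when $\sum V_i^\beta=1$. In general it need not vanish, and one must instead solve for the toll that makes the exact recursion match. The cleanest way is to define $\phi_n$ as
\begin{equation*}
\phi_n:=\varpi(\ln n)n^\beta-\E\sum_i\varpi(\ln n_i)\,n_i^\beta\,\eins(n_i>0)
\end{equation*}
for $n>s$, and $\phi_n:=\varpi(\ln n)n^\beta$ for $n\le s$. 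Then $\E X_n=\varpi(\ln n)n^\beta$ exactly, so $\varpi[\phi]=\varpi$.

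It remains to bound this $\phi_n$, and this is the main obstacle. Taylor-expanding $n_i^\beta$ around $nV_i$ gives a first-order term of mean $\Ok{n^{\beta-1}}\cdot\Ok{1}$ from the $s_0,s_1$ shifts. The variance term contributes $n^{\beta-1}$, which is fine since $\delta\le\beta$ and $\delta\le 1/2$. The Hölder term contributes $n^\beta\,\E|\ln n_i-\ln nV_i|^{2\delta}\approx n^{\beta-\delta}$. The cross term from $\sum\E V_i^\beta\varpi(x+\ln V_i)$ equals $\varpi(x)$ only when the periodic function is compatible with the split law; this holds because $\ln V_i\in d\ZZ$ and $\E\sum V_i^\beta=1$. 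Small $V_i$, meaning $nV_i\lesssim1$, where the concentration fails, must be handled by bounding their contribution with $\E\sum_i V_i^{\beta}\eins(nV_i<n^{\epsilon})$. This is where $\delta\le\beta$ enters, via $n_i^\beta\le (nV_i)^{\beta-\delta}n^{\delta}$-type bounds.

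For the "in particular" part, I take $\varpi(x)=\sum_k 2^{-2\delta k}\cos(2^k 2\pi x/d)$, a Weierstrass function. It is Hölder-$2\delta$ but no better for $2\delta<1$; for $2\delta=1$ one uses a Lipschitz but non-$C^1$ periodic function, so that $\alpha>1$ gives only constants. Applying the first part to it gives the required toll.
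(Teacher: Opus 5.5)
Your final construction, $\phi_n=\varpi(\ln n)n^\beta-\E\sum_i n_i^\beta\varpi(\ln n_i)\eins(n_i>0)$, is exactly the paper's. The reduction is also the same. Because $\ln V_i\in d\ZZ$ and $\E\sum_iV_i^\beta=1$, one has $\varpi(\ln n)n^\beta=\sum_i\E\bigl[(nV_i)^\beta\varpi(\ln(nV_i))\bigr]$ for \emph{every} $d$-periodic $\varpi$. Your intermediate remark that this difference ``need not vanish in general'' is therefore unfounded: no a.s.\ condition on $\sum_iV_i^\beta$ is needed. So $\phi_n$ splits into the Taylor term $\varpi(\ln n)\sum_i\E[(nV_i)^\beta-n_i^\beta]$ and the H\"older term $\sum_i\E\bigl[n_i^\beta(\varpi(\ln(nV_i))-\varpi(\ln n_i))\bigr]$. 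Your Weierstrass-type example for the ``in particular'' part makes explicit what the paper leaves implicit; for $2\delta=1$ any nonconstant Lipschitz periodic function will do.

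The gap is in the H\"older term. It is the only nontrivial estimate, and you treat it only heuristically, with a heuristic that does not close as written. The scaling $|\ln n_i-\ln(nV_i)|\approx n^{-1/2}$ with the weight $n^\beta$ outside the expectation is valid only for $V_i$ bounded away from $0$. In general the deviation is of order $(nV_i)^{-1/2}$ and $V_i$ can be arbitrarily small. What is needed is a bound uniform in $V_i$ that keeps the weight $n_i^\beta$ inside the conditional expectation, since large $n_i$ is correlated with large deviation. Your cutoff at $nV_i\approx n^\epsilon$ does not supply this. With your crude bounds the regime above the cutoff gives only $n^{\beta-\epsilon\delta}$, and the regime below can cost up to $n^{\epsilon\beta}$, which is too much when $\delta=\beta$ and $O(1)$ is required.

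The paper closes this as follows. For $nV_i\ge1$, H\"older continuity, boundedness of $\varpi$ and the mean value theorem give $|\varpi(\ln(nV_i))-\varpi(\ln n_i)|=O\bigl(|n_i-nV_i|^{2\delta}/(nV_i)^{2\delta}\bigr)$ for all $n_i\ge1$. Then $n_i^\beta\le n^{\beta-\delta}n_i^\delta$ (this is where $\delta\le\beta$ is used) bounds the term by $O\bigl(n^{\beta-\delta}\bigl(n_i(n_i-nV_i)^2/(nV_i)^2\bigr)^\delta\bigr)$. The binomial computation $\E[n_i(n_i-nV_i)^2\mid V_i]=O((nV_i)^2)$, together with Jensen (as $\delta\le\frac12$), gives $O(n^{\beta-\delta})$ uniformly in $V_i$. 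For $nV_i<1$ the term is trivially $O(1)$, so no $n^\epsilon$ cutoff is needed. Similarly, the Taylor term is $O(1)$ rather than $O(n^{\beta-1})$, because its error is $O((nV_i)^{\beta-1})$. That suffices precisely because $\delta\le\beta$.
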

% Define for $λ>0$
% \begin{equation}
%     f(λ) := e^{-λ} \sum_{n=s+1}^\infty \phi_n\frac{λ^n}{n!}.
% \end{equation}
% In the nonlattice case,
% the constant $\varpi$ from \Cref{thm:mean} is given by
% \begin{equation}
%     \varpi = \frac1{μ}\int_0^\infty f(\nu)\nu^{-β-1}\mathrm d\nu + \sum_{k=1}^s \varpi\br{\fringephi}\phi_k,
% \end{equation}
% where $\varpi\br{\fringephi}$ is given by

% In the nonlattice case, its Fourier series is given by
% \begin{equation}
%     \varpi(x) = \sum_{m\in \ZZ} \mellin f{}
% \end{equation}

For arbitrary $s_0$ and $s_1$, we can link $\varpi[\fringephi]$ with negative moments of a perpetuity in the nonlattice case as follows:
\begin{theorem}\label{thm:s0}
    Assume we are in the nonlattice case, and define for $k\in \NN$ the toll function $\delta_{≥k} := \sum_{j≥k} \delta_j$ counting fringe trees with at least $k≥1$ balls. Assume that either $k>s$ or $s≤s_0+bs_1$ and $k>s_1$. For any $x>0$ let $E_x$ be Gamma-$\Gamma(x,1)$-distributed and independent of $\widehat V$, i.e.\ $\Pk*{E_x\in \mathrm dt} = \frac1{\Gamma(x)} t^{x-1}e^{-t}\mathrm dt$ for $t≥0$. Further define $\Xi$ as the unique integrable solution of
    \begin{equation}\label{def:xi}
        \Xi \overset d= (E_{s_0+(b-1)s_1} + \Xi)\widehat V,
    \end{equation}
    where $\Xi$ is independent of $E_{s_0+(b-1)s_1}$ and $\widehat V$. Then, 
    \begin{equation}\label{thm:s0:delta-gk}
        \varpi[\delta_{≥k}] = \frac1{βμ}{\Ek*{(\Xi+E_{k-s_1})^{-β}}},
    \end{equation}
    so
    \begin{equation}\label{thm:s0:delta-k}
        \varpi[\fringephi]  = \frac1{βμ}{\Ek*{(\Xi+E_{k-s_1})^{-β}-(\Xi+E_{k+1-s_1})^{-β}}}.
    \end{equation}
    and for $k\to\infty$,
    \begin{equation}\label{thm:s0:asymptotics}
        \varpi[\delta_{≥k}] \sim \frac1{βμk^β} \quad\text{and}\quad  \varpi[\delta_{k}] \sim \frac1{μk^{1+β}}.
    \end{equation}
\end{theorem}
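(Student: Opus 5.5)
Throughout, write $L$ for the sum of the logarithmic shrink factors along a spine. The plan is to compute $\varpi[\delta_{\ge k}]$ through a spine (size-biased path) decomposition combined with Poissonization, and then identify the resulting expression with the perpetuity $\Xi$.

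\textbf{Step 1: many-to-one formula.} Write
\begin{equation*}
\Ek{X_n(\delta_{\ge k})}=\sum_{v}\Pk{n_v\ge k}.
\end{equation*}
Summing over all nodes at depth $m$ and using \eqref{eq:v-bias} yields
\begin{equation*}
\sum_{|v|=m}\Ek{g(M^1_v)}=\Ek{e^{\beta S_m}g(e^{-S_m})},
\end{equation*}
where $S_m=\sum_{j\le m}(-\ln\widehat V_j)$ is a random walk with i.i.d.\ steps of mean $\mu$. Along the spine, however, $n_v$ is not simply a binomial of $nM_v^1$. At every internal ancestor, $s_0$ balls are retained and the $s_1$ balls sent to each of the $b-1$ off-spine children are lost from the spine, while $s_1$ balls are added to the spine child itself.

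\textbf{Step 2: exact description via exponential clocks.} I would pass to a continuous "Poisson/uniform" description. Give each ball an independent uniform label, or equivalently embed the multinomial splits in Poisson processes, so that the event $\{n_v\ge k\}$ becomes a statement about order statistics along the spine.

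Reading the spine from the bottom up, the number of balls entering a node at depth $m$ is at least $k$ iff $n$ uniform points survive the repeated thinning. With Gamma variables, the requirement "$k-s_1$ further balls beyond the $s_1$ forced ones" translates into the condition
\begin{equation*}
n\,e^{-S_m}\gtrsim \sum_j (\text{Gamma}(s_0+(b-1)s_1)\text{ increments})\cdot \prod\widehat V+E_{k-s_1}.
\end{equation*}
Concretely, in Gamma-time the ancestors' removed balls contribute independent $E_{s_0+(b-1)s_1}$ terms, each scaled by the product of the subsequent $\widehat V$'s. Running the recursion backwards from the node gives precisely the perpetuity $\Xi\overset d=(E_{s_0+(b-1)s_1}+\Xi)\widehat V$ of \eqref{def:xi}, plus $E_{k-s_1}$ for the node itself. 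The hypotheses $k>s$, or $s\le s_0+bs_1$ with $k>s_1$, are what guarantee that every node with $\ge k$ balls has only split ancestors and that the bound $n_v\ge k$ is not disturbed by the leaf capacity. This is why they appear.

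\textbf{Step 3: renewal summation.} Sum over $m$ with the $e^{\beta S_m}$ weight. The key renewal theorem (nonlattice case, $\Ek{S_1}=\mu$) gives
\begin{equation*}
n^{-\beta}\sum_m \Ek*{e^{\beta S_m}\eins\bigl(e^{-S_m}\ge (\Xi+E_{k-s_1})/n\bigr)}\to\frac1\mu\int_{-\infty}^{\infty}e^{\beta t}\eins(t\le \ln(1/W))\,dt=\frac{1}{\beta\mu}W^{-\beta},
\end{equation*}
with $W=\Xi+E_{k-s_1}$. The identity \eqref{thm:s0:delta-gk} then follows after taking expectation over $W$, provided $\Ek{W^{-\beta}}<\infty$ (true since $E_{k-s_1}$ has shape $\ge1>\beta$ or at least has finite $-\beta$ moment) and we have domination to exchange the limit and the expectation. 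Formula \eqref{thm:s0:delta-k} is the difference of the cases $k$ and $k+1$.

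\textbf{Step 4: asymptotics.} For \eqref{thm:s0:asymptotics}, note that $E_{k-s_1}/k\to1$ with concentration while $\Xi$ is fixed and integrable. Hence $\Ek{(\Xi+E_{k-s_1})^{-\beta}}\sim k^{-\beta}$ by dominated convergence, and the same argument applies to the difference via the mean value theorem, giving $\beta k^{-1-\beta}$.

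\textbf{Main obstacle.} I expect the hardest part to be Step 2: making the exact coupling between the discrete multinomial split (with retained and redistributed balls) and the continuous Gamma/perpetuity representation rigorous, rather than merely asymptotic. The errors must be uniformly $o(n^\beta)$ after summation, and the boundary effects for small $n_v$ must be controlled. I would first try to realize the balls as points of a Poisson process on time, with each split consuming the first $s_0+bs_1$ arrivals deterministically; then de-Poissonize using the fact that $\phi_n=\delta_{\ge k}$ is monotone in $n$.
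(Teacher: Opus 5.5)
Your outline (Poisson clocks along a path, change of measure to $\widehat V$, a renewal theorem, then $\int e^{-\beta y}\eins(e^y\ge W)\,\mathrm dy=W^{-\beta}/\beta$) leads to the correct formula. However, Step~3 has a genuine gap, and it sits exactly where the paper needs Markov renewal theory.

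In the Poissonized model the clock description is exact; this is what Section~\ref{sec:integral} does. But it does not attach an independent copy of $\Xi$ to a node. After the many-to-one change of measure, the depth-$m$ contribution is
\[
\E\Bigl[e^{\beta S_m}\,\eins\bigl(\lambda e^{-S_m}\ge \Xi_m+E_{k-s_1}\bigr)\Bigr],
\qquad
\Xi_m=\sum_{j=1}^m E^{(j)}\prod_{l=j}^m\widehat V^{(l)}.
\]
Here $\Xi_m$ is the chain $x\mapsto\widehat V(x+E)$ run for $m$ steps from $0$. It is built from the same $\widehat V^{(l)}$ whose logarithms make up $S_m$.

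So the claim in Step~2 that one gets ``precisely the perpetuity $\Xi$'' is false. Step~3 conditions on $W=\Xi+E_{k-s_1}$ and applies the scalar key renewal theorem to $U(\ln(n/W))$; this tacitly treats $W$ as independent of the walk and of $m$. That the limit nonetheless equals the ``independent'' answer is the real content of the theorem. The paper sets $H(x,y)=e^{-\beta y}F(e^y-x)$, iterates \eqref{s0:z-recursion} to \eqref{ma7}, and applies Alsmeyer's theorem to the Markov random walk $(M_n,S_n)$ after checking Harris recurrence and \eqref{cond:mrw-riemann}. $\Xi$ enters as the stationary law of $M_n$. By comparison, the point you flagged as the main obstacle (exactness of the clocks, de-Poissonization) is routine.

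Your spine route can be completed without Markov renewal theory, but you have to supply the argument.
\begin{itemize}
\item Since the pairs $(E^{(l)},\widehat V^{(l)})$ are i.i.d., reversing the path gives $(S_m,\Xi_m)\overset d=(S_m,\Xi'_m)$ with $\Xi'_m:=\sum_{j=1}^m E^{(j)}e^{-S_j}\uparrow\Xi'\overset d=\Xi$. So the offset is governed by the first few steps.
\item For $m\ge J$, use $\Xi'_m\ge\Xi'_J$ and condition on the first $J$ steps. The sum becomes $\E\bigl[e^{\beta S_J}U\bigl(t-S_J-\ln(\Xi'_J+E_{k-s_1})\bigr)\bigr]$, where $t=\ln\lambda$ and $U(u)=\sum_{m}\E[e^{\beta S_m}\eins(S_m\le u)]\sim e^{\beta u}/(\beta\mu)$. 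The weight $e^{\beta S_J}$ cancels exactly, giving the upper bound $\frac1{\beta\mu}\E[(\Xi'_J+E_{k-s_1})^{-\beta}]$.
\item The matching lower bound requires controlling the tail $e^{-S_J}\sum_{j>J}E^{(j)}e^{-(S_j-S_J)}$, which is correlated with the remaining walk. One way is Cauchy--Schwarz together with a uniform bound on $\E\bigl[\bigl(e^{-\beta u}\sum_m e^{\beta S_m}\eins(S_m\le u)\bigr)^2\bigr]$.
\end{itemize}
Carried out this way, your proof would be a more elementary alternative to the paper's.

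Finally, your domination step needs $\E[W^{-\beta}]<\infty$, and your justification fails for $\beta=1$ and $k=s_1+1$ (for example, counting all nodes of a binary search tree), since $\E[E_1^{-1}]=\infty$. In that case finiteness comes from $\Xi>0$ with $\E[(-\ln\Xi)^+]<\infty$, cf.\ \eqref{eq:lnXi}.
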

\begin{remark}
The condition that either $k>s$ or $s≤s_0+bs_1$ can be lifted by the methods detailed in \Cref{sec:fringe} plus \Cref{prop:sum}.
\end{remark}

Since the negative moments of the perpetuity $\Xi$ are hard to compute, we show in \Cref{sec:approximation} how to approximate $\varpi$ in the case of $s_0+s_1>0$ with known error bounds.
There is, however, a special case in which the distribution of $\Xi$ can be explicitly calculated.
\begin{corollary}\label{rem:beta}
    Assume that $V$ is $B(a,s_0+(b-1)s_1)$ Beta-distributed for some $a>0$. Note that because $\Ek{V} ≤ \frac1b$, it must hold that $a≤\frac{s_0}{b-1} + s_1$ with equality if and only if $β=1$. 
    Then,
    \begin{equation}\label{rem:beta:mu}
        \mu = \psi(a+β+s_0+(b-1)s_1) - \psi(a+β),
    \end{equation}
    where $\psi(t) := \frac{\Gamma'(t)}{\Gamma(t)}$ is the digamma function, and
    \begin{equation}\label{rem:beta:1}
        \varpi[\fringephi] = \frac{\Gamma(a+k-s_1)}{\mu\Gamma(a+k-s_1+β+1)}.
    \end{equation}
    for $k>s$ or $s≤s_0+bs_1$ and $k≥s_1$.
\end{corollary}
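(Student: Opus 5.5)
The plan is to plug the Beta assumption into \Cref{thm:s0}: identify the law of $\widehat V$, solve the perpetuity \eqref{def:xi} explicitly via the beta--gamma algebra, and then evaluate the negative moments in \eqref{thm:s0:delta-gk} and \eqref{thm:s0:delta-k} with Gamma function identities. Throughout write $c:=s_0+(b-1)s_1$, so $V\sim B(a,c)$.

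The preliminary remark is immediate. $\E V = a/(a+c)$, and $\E V\le\frac1b$ since $\E\sum_i V_i\le 1$ with $V=V_I$. This is equivalent to $(b-1)a\le c$, i.e.\ $a\le \frac{s_0}{b-1}+s_1$. Equality holds iff $\E\sum_i V_i=1$, which is the split tree case $\beta=1$.

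\emph{Law of $\widehat V$ and $\mu$.} By \eqref{def:v-bias}, $\widehat V$ has density proportional to $v^\beta\cdot v^{a-1}(1-v)^{c-1}$. Since $\E[bV^\beta]=1$ normalizes it, this is exactly the $B(a+\beta,c)$ density. For $B\sim B(p,q)$ the standard formula is $\E[-\ln B]=\psi(p+q)-\psi(p)$, obtained by differentiating $B(p,q)$ in $p$. With $p=a+\beta$ and $q=c$ this gives \eqref{rem:beta:mu} directly from \eqref{def:mu}.

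\emph{Solving the perpetuity.} I will use the beta--gamma algebra. If $G_p\sim\Gamma(p,1)$ and $G_q\sim\Gamma(q,1)$ are independent, then $G_p+G_q\sim\Gamma(p+q,1)$. Moreover, if $B\sim B(p,q)$ is independent of a $\Gamma(p+q,1)$ variable $G$, then $GB\sim\Gamma(p,1)$. Now take $\Xi\sim\Gamma(a+\beta,1)$ independent of $E_c$ and $\widehat V$. Then $E_c+\Xi\sim\Gamma(a+\beta+c,1)$, and multiplying by the independent $\widehat V\sim B(a+\beta,c)$ gives $\Gamma(a+\beta,1)$ again. So $\Gamma(a+\beta,1)$ is an integrable solution of \eqref{def:xi}, and by the uniqueness asserted in \Cref{thm:s0} it is the law of $\Xi$.

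\emph{Computing the moments.} Since $\Xi$ and $E_{k-s_1}$ are independent, $\Xi+E_{k-s_1}\sim\Gamma(y+\beta,1)$ with $y:=a+k-s_1$. For $G\sim\Gamma(x,1)$ and $x>\beta$ we have $\E G^{-\beta}=\Gamma(x-\beta)/\Gamma(x)$. Hence \eqref{thm:s0:delta-gk} gives
\[
\varpi[\delta_{\ge k}]=\frac{\Gamma(y)}{\beta\mu\,\Gamma(y+\beta)} .
\]
Subtracting the same expression with $y+1$ in place of $y$, and using $\Gamma(y+1)=y\Gamma(y)$ and $\Gamma(y+\beta+1)=(y+\beta)\Gamma(y+\beta)$, yields
\[
\frac{\Gamma(y)}{\beta\mu\,\Gamma(y+\beta+1)}\bigl((y+\beta)-y\bigr)=\frac{\Gamma(y)}{\mu\,\Gamma(y+\beta+1)} .
\]
This is \eqref{rem:beta:1}.

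\emph{Main obstacle: the range of $k$.} \Cref{thm:s0} is stated for $k>s_1$ in the case $s\le s_0+bs_1$, while the corollary allows $k=s_1$. That boundary case needs $\varpi[\delta_{\ge s_1}]$. The plan there is to check that the proof of \Cref{thm:s0} still goes through with the convention $E_0:=0$. Then $\Xi+E_0=\Xi\sim\Gamma(a+\beta,1)$, and $\E\Xi^{-\beta}=\Gamma(a)/\Gamma(a+\beta)$ is finite because $a>0$, so the same formula holds with $y=a$. If that extension is not automatic, I would argue directly that every non-root node has $n_v\ge s_1$. Then $\delta_{\ge s_1}$ counts (up to the root) all nodes, and $\varpi[\delta_{\ge s_1}]$ can be matched with the $y=a$ value through the mean recursion behind \Cref{thm:mean}.
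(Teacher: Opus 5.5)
Your computation is the paper's proof wherever \Cref{thm:s0} applies, namely for $k>s$, or for $s\le s_0+bs_1$ and $k>s_1$. It runs through the same steps: $\widehat V\sim B(a+\beta,c)$, the digamma formula for $\mu$, the beta--gamma algebra showing that $\Gamma(a+\beta,1)$ solves \eqref{def:xi}, and the negative Gamma moments in \eqref{thm:s0:delta-gk}--\eqref{thm:s0:delta-k}. The paper simply invokes \Cref{thm:s0} and never treats $k=s_1$ separately.

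The gap is your last paragraph. Neither route for $k=s_1$ can succeed, because \eqref{rem:beta:1} is in general false at $k=s_1$ when $s=s_0+bs_1$.

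\emph{Why setting $E_0:=0$ does not help.} The toll function $\delta_{\ge s_1}$ violates \ref{cond:noleaves-s1}, and that condition is needed for more than the bounds \eqref{ma5}--\eqref{ma6}. If $s=s_0+bs_1$, a root with exactly $s$ balls is a leaf. Yet the Poisson recursion \eqref{s0-recursion-unbiased} gives it $b$ phantom children with $s_1$ balls each, and each contributes $\phi_{s_1}\neq0$. So the recursion itself is wrong in this case.

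\emph{Why the fallback fails.} $\varpi[\delta_{\ge s_1}]=\varpi[1]$ depends on $s$, whereas $\Gamma(a)/(\beta\mu\Gamma(a+\beta))$ does not.

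\emph{Counterexample.} Take $b=2$, $s_0=0$, $s_1=1$ and $\V=(U,1-U)$ with $U$ uniform. Then $V\sim B(1,1)$, $a=\beta=1$, $\mu=\frac12$, and the formula predicts $\varpi[\delta_1]=1$. A node with $n>s$ balls sends $j$ balls to its first child, with $j$ uniform on $\{1,\dots,n-1\}$.
For $s=1$, all balls sit in one-ball leaves, so $X_n(\delta_1)=n$, as predicted.
For $s=2$ (still $s\le s_0+bs_1$), set $x_n:=\E[X_n(\delta_1)]$. It satisfies $x_1=1$, $x_2=0$ and $x_n=\frac{2}{n-1}\sum_{j<n}x_j$ for $n\ge3$. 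Hence $x_n=n/3$ for all $n\ge3$, and $\varpi[\delta_1]=\frac13\neq1$. For $k=2>s_1$ one gets $\frac13$, which does match the formula.

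So the ``$k\ge s_1$'' in the statement should be read as $k>s_1$, as in \Cref{thm:s0} and \Cref{ex:hennequin}. You should drop the boundary case rather than try to prove it.
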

For split trees with $\beta=1$, this \Cref{rem:beta} applies more or less only to a generalization of $m$-ary search trees called generalized Quicksort by Hennequin \cite{Hennequin_1989,hennequin_1991}, which we explore in Examples \ref{ex:mary} and \ref{ex:hennequin}.

\section{Examples}\label{sec:examples}
We explore some possible specific toll functions and fractional split tree models to apply our theorems to. 
\subsection{Toll functions}
We begin by showing a few examples of interesting toll functions, inspired by those included in \cite{holmgren_limit_2015}, 
 which studies binary search trees and random recursive trees. For some toll functions we will be able to explicitly compute $\varpi$ using \Cref{thm:constants} or \Cref{rem:beta}.
In \Cref{thm:constants}, the $0$-th Fourier coefficient of $\varpi$ in the lattice case has the same formula as the constant $\varpi$ in the nonlattice case. In order to simplify the statement of our results, we define
\begin{equation}\label{def:varpi0}
    \varpi_0 := \begin{cases}
        \varpi & \text{ in the nonlattice case,} \\ \frac1d\int_0^d \varpi(t)\mathrm dt & \text{ in the lattice case}.
    \end{cases}
\end{equation}
\subsubsection{Nodes, leaves and internal nodes}
We start by one of the simplest cases: The number of nodes, leaves and internal nodes:

\begin{example}[Expected number of nodes if $s_0=s_1=0$ and $s=1$]
    By \eqref{constants:fk} and \eqref{constants:f1} in \Cref{thm:constants}, we see quickly that in the nonlattice case for split trees, we have $n$ leaves and in expectation $\sim \frac{n}{k(k-1)\mu}$  subtrees of size $k\ge2$, which sum up to $\sim n(1+\mu\inv)$  nodes
(using \Cref{prop:sum}).
In the lattice case, there are some oscillations around these values.

    In the case of fractional fringe trees with $\beta<1$, we have for $\fringephi[1]$
    \begin{equation}\label{varpi-delta1}
        \varpi_0[\delta_1] = \frac{\Gamma(1-\beta)}{μ} - \frac{b\Gamma(1-β)}{μ}\Ek V
        = \frac{\Gamma(1-β)}{μ}\kl*{1-\Ek{bV}}
    \end{equation}
    and $\varpi_0[\fringephi] = \frac{\Gamma(k-β)}{\mu k!}$ for $k≥2$. The
    sum over these is the constant for the number of internal nodes by using
    \Cref{prop:sum}. It can be calculated either as a hypergeometric sum or via the integral definition of the beta function \cite{GammaFractionSum} and is
    \begin{equation}\label{eq:sum:hyper}
        \varpi_0\br*{\sum_{k=2}^\infty \fringephi} = \frac{\Gamma(2-β)}{βμ} = \frac{\Gamma(1-β)(1-β)}{β\mu}.
    \end{equation}
    Therefore, the constant for the number of nodes is
    \begin{equation}
        \varpi_0[1] = \frac{\Gamma(1-β)}{μ}\kl*{\frac1{β}-\Ek{bV}}.
    \end{equation}
    We use the notation $\varpi[1]$ since the toll function for the number of nodes is constant 1.

    Let $\phi^l(T) := \eins(T \text{ has only one node})$ be the toll function for the number of leaves.
    Note that for $n≥2$,
    $
        \phi^l_n = \Ek[\big]{(1-\sum_{i=1}^b V_i)^{n}},
    $
    so by \Cref{prop:sum},
    \begin{equation}
        \varpi_0[\phi^l] = \frac{\Gamma(1-β)}{μ}\kl*{1-\Ek{bV}} + \sum_{k=2}^\infty \frac{\Gamma(k-β)}{\mu k!}\El{\kl*{1-\sum_{i=1}^b V_i}^{k}}.
    \end{equation}
\end{example}

\subsubsection{Balls remaining in the tree}
In fractional split trees, balls can disappear instead of going to one of the children, so it is an interesting question to consider how many balls remain in the tree. Define the toll function
\begin{equation}\label{def:phi-b}
    \phi_b(T_n) := s_0\eins\kl{n > s} + n\eins\kl{n≤s},
\end{equation}
so that $X_n(\phi_b)$ counts the number of balls in the nodes of $T_n$. Since this toll function is bounded by $s+1$ and only depends on $n$, we can apply Theorems \ref{thm:mean} and \ref{thm:limit}.

For \Cref{thm:constants}, we again assume $s_0=s_1=0$, which means that all balls are in leaves and $\phi_b = \sum_{k=1}^s k\fringephi[k]$. If $β<1$, we have by \eqref{constants:general}
 and \eqref{constants:fk},
\begin{align}
    \varpi_0[\phi_b] &= \sum_{k=1}^s 
\biggl(
\frac{\Gamma(k-β)}{\mu(k-1)!} - \frac{b}{\mu}\sum_{j=0}^{s-k}\Ek*{V^k(1-V)^j}\frac{\Gamma(k+j-β)}{(k-1)!j!}
\biggr)
    \nonumber \\&= \sum_{k=1}^s 
\biggl
(\frac{\Gamma(k-β)}{\mu(k-1)!} - \frac{b}{\mu}\sum_{\ell=0}^{k}\Ek*{\ell V^\ell(1-V)^{k-\ell}}\frac{\Gamma(k-β)}{\ell!(k-\ell)!}
\biggr)
    \nonumber \\&= \sum_{k=1}^s \frac{\Gamma(k-β)}{\mu(k-1)!} \kl*{1-\Ek*{bV}}.
\end{align}
The first equality is by noting the double sum runs over all $k, j$ with
$k+j≤s$ and changing indices, the second is by interpreting 
 the inner
sum as the expectation of a binomial $\Bin(k,V)$ random variable. In the lattice case, this is the 0-th Fourier coefficient of $\varpi[\phi_b]$, implying $\varpi[\phi_b]$ is oscillating around this value.

\subsubsection{\texorpdfstring{$k$}{k}-protected nodes} A node is called $k$-protected ($k≥1$) if the shortest distance to a descendant that is not a leaf is at least $k$. Hence, $1$-protected nodes are internal nodes and $2$-protected nodes are internal nodes that have no children that are leaves. 
The 2-protected nodes are the most studied case.

The number $X_n(\phiprot)$ of $k$-protected nodes has the toll function
\begin{equation}
    \phiprot(T) := \eins\kl*{\forall v\in T, \abs v<k: \deg(v)\ne 0},
    \label{def:phiprot}
\end{equation}
where $\deg$ is the outdegree, and $\abs v$ denotes the length of $v$ as a string, which equals the distance of $v$ to the root.
If $\beta=1$ or $s_1>0$, a node $v$ is a leaf if and only if $1≤n_v≤s$, so
\begin{equation}
    \phiprot(T_n) = \eins\kl*{\forall v\in [b]^\ast, \abs v<k: n_v=0 \text{ or } n_v>s},
    \label{def:phiprot-2}
\end{equation}
Else, i.e.\ if $\beta<1$ and $s_1=0$, a node could be a leaf just by all balls vanishing; this happens with probability $\Ek[\big]{(1-\sum_{i=1}^b V_i)^{n-s_0}}$ for $n>s$.

Since $\phiprot$ is a bounded toll function, \Cref{thm:mean} applies, and we have 
\begin{equation}
    \Ek*{X_n(\phiprot)} = \varpiprot(\ln n)n^β+o(n^β)
\end{equation}
for a $d$-periodic function $\varpi$ bounded away from zero.
\Cref{thm:limit} applies if $k=1,2$ and $\beta=1$, because then $\phiprot$ only depends on the numbers $n_1, \dots, n_b$ of balls that go to the children of the root, as can be seen from \eqref{def:phiprot-2}.
%\Cref{thm:constants} can also be applied if $s_0=s_1=0$ to calculate $\varpiprot$.

Assume $s_0=s_1=0$. As outlined in \Cref{sec:poisson}, $\varpiprot$ can be calculated from the Mellin transform of 
$\fprot(ł) := \mu\inv\E[\phiprot(\poisson T_ł)]$, 
where $\poisson T_ł$ is $T_n$ for $n$ being Poisson$(ł)$ distributed and independent. We have for $\beta=1$
\begin{align}
    \mu\fprot(ł) &= \Pk*{ \forall {v\in [b]^{k-1}}: n_v' = 0 \text{ or } n_v'>s} - \P(n_{\eps} = 0)
    \nonumber \\ &= \Ek*{ \prod_{v\in [b]^{k-1}} \kl*{1 - \sum_{j=1}^s \frac{(M^1_vł)^j}{j!}e^{-M^1_vł}}} - e^{-ł}.
    \label{ex:prot-f}
\end{align}
This product can be rewritten into a sum 
of $1-e^{-\lambda}$ (with Mellin transform $-\Gamma(z)$)
and a number of  terms of the form $\E[ł^a Xe^{-Ył}]$ for
some integers $a\ge1$ and random variables $X$ and $Y$ (depending on the
split vectors  $\V^{(v)}$ for $\abs v<k-1$); 
these terms give (using Fubini's theorem) Mellin transforms of the form
$\Gamma(z+a)\E[X Y^{-z-a}]$, yielding a result similar to 
\cite[(4.66) and (4.68)]{janson:trie}. 
Note that $\fprot$ is bounded, and also $\fprot(\lambda)=\Ok*{\lambda^2}$
since $\phiprot_1=0$; hence the Mellin transform 
$\mellin{\fprot}(z)$ exists at least for $-2<\Re(z)<0 $
(the apparent singularity at $z=-1$ is thus removable).
%They also show how to deal with the removable
%singularity at $z=-1$ for the case $β=1$. 

\begin{example}
    In the particular case $β=1$, $b=2$, $s=1$, $s_0=s_1=0$ and $k=2$,
    the formula \eqref{ex:prot-f} simplifies to just, using $V_1+V_2=1$,
    \begin{equation}
        \mu\fprot[2](ł) = 1 - e^{-ł} - 2\Ek*{Vłe^{-Vł}} + \Ek*{V_1V_2ł^2\exp\kl{-ł}},
    \end{equation}
    which has Mellin transform
    \begin{equation}
        \mu\mellin{\fprot[2]}(z) = -\Gamma(z) - 2\Ek*{V^{-z}}\Gamma(z+1) + \Ek*{V_1V_2}\Gamma(z+2).
    \end{equation}
    % If we are in the nonlattice case and $\beta<1$, then by \eqref{nonlattice-mellin}
    % \begin{equation}
    %     \varpiprot[2] = \frac1{\mu}\kl*{-\Gamma(-β) - \Gamma(1-β) + \Ek*{V_1V_2(V_1+V_2)^{β-2}}\Gamma(2-β)}.
    % \end{equation}
    If we are in the nonlattice case, we need to find the value of $\mellin{\fprot[2]}$ at the removable singularity $z=-1$:
    \begin{align}
        \mu\mellin{\fprot[2]}(-1) &= \lim_{z\to -1}-\frac{\Gamma(z+2)}z\cdot \frac{z\Ek*{2V^{-z}} + 1}{z+1} + \Ek*{V_1V_2}
        \nonumber \\ &= \left. \dd z z\Ek*{2V^{-z}}\right|_{\mathrlap{z=-1}} + \Ek*{V_1V_2}
        \nonumber \\ &= 
\Bigl(2\Ek*{V^{-z}} + z\Ek*{-2\ln(V)V^{-z}} \Bigr)
\biggr|_{\mathrlap{z=-1}} + \Ek*{V_1V_2}
        \nonumber \\ &= 1 - \mu  + \Ek*{V_1V_2},
        \label{ex:prot:singularity}
    \end{align}
as found in a special case in 
     \cite[Example~4.13]{janson:trie}.
\end{example}

Similar and stronger results have been shown in e.g.\ \cite{Devroye_Janson_2014, janson:trie,holmgren_cmj_2017,holmgren_multivariate_2017, Bóna_2014, Cheon_Shapiro_2008, Fuchs_Lee_Yu_2016, Du_Prodinger_2012, Gaither_Homma_Sellke_Ward_2012, GaitherWard2013}.

\subsubsection{Outdegree}\label{sec:degree}
The number $X_n(\phi\degk)$ of nodes with a certain outdegree $k≥1$ has the bounded toll function
\begin{equation}\label{def:degk}
    \phi\degk(T_n) := \eins(\deg(\eps) = k) = \eins\kl*{\abs*{\mg{i: n_i>0}}=k}
\end{equation}
only depending on $n_1,\dots, n_b$, so Theorems \ref{thm:mean} and \ref{thm:limit} both apply. Therefore,
\begin{equation}
    \frac{X_n(\phi\degk)}{n^β\varpi[\phi\degk]} - 1 \overset{\ell_2}\to Y,
\end{equation}
with $\varpi[\phi\degk]$ from \Cref{thm:mean} and $Y$ from \Cref{thm:limit}.

If $s_0=s_1=0$, we can use \Cref{thm:constants} to calculate the limit
function $\varpi[\phi\degk]$. 
First, we note that \eqref{def:degk} trivially yields $\phi\degk_n=0$ for $n<k$ or $n≤s$. 
% that thus \eqref{deg-2} holds also for $0≤n<k$ with $ \phi\degk_0:=0$.
By \Cref{prop:sum} and \Cref{thm:constants}, we have
\begin{equation}
  \label{deg-3}
\varpi_0[\phi\degk]
= \sum_{n=1}^\infty  \phi\degk_n\varpi_0[\fringephi[n]]
= \sum_{n=j}^\infty  \phi\degk_n\varpi_0[\fringephi[n]]
= \sum_{n=j}^\infty  \phi\degk_n\frac{\Gamma(n-β)}{μn!}
\end{equation}
for any $j=0,\dots, s+1$ with $\phi\degk_0:=0$.

In order to keep the terms simple, we restrict ourselves to the case $k>s$.
It then follows from \eqref{def:degk} and the inclusion-exclusion formula that, for every $n≥1$,
\begin{equation}\label{deg-2}
    \phi\degk_n %= \Ek*{\phi\degk(T_n)}
    = \sum_{l=b-k}^b (-1)^{l+k-b} \binom l{b-k} \sum_{\substack{I\subseteq [b] \\ \abs{I} = l}} \Ek*{\kl*{1-\sum_{i\in I}V_i}^n}.
\end{equation}
Write as a shorthand $V_I := \sum_{i\in I} V_i$. 
% By \Cref{thm:constants}, we have
% $\varpi_0[\fringephi[n]]=\frac{\Gamma(n-β)}{μn!}$ for $n > s$ and thus
% \eqref{deg-3} yields
% \begin{equation}
%   \label{deg-4}
% \varpi_0[\phi\degk] 
% = \sum_{n=j}^\infty  \phi\degk_n\frac{\Gamma(n-\beta)}{\mu n!},
% \end{equation}
% for any $j=0,\dots,k$.  so that \eqref{deg-2} holds for all $n$.
Then, if $\beta<1$, 
the sum in \eqref{deg-3} %\eqref{constants:general} 
can be written as a linear combination of sums having the form
\begin{equation}\label{ex:deg:1}
    \frac{1}{μ}\sum_{n=0}^\infty \frac{\Gamma(n-\beta)}{n!}
    \Ek*{\kl*{1-V_I}^n}
=
    \frac{1}{μ}\sum_{n=0}^\infty \Gamma(-\beta)\binom{\beta}n(-1)^n \Ek*{\kl*{1-V_I}^n}
    = \frac{\Gamma(-\beta)}{\mu} \Ek*{V_I^{\beta}}
\end{equation}
by Fubini's theorem and summing the binomial series 
(a Taylor series)
for $(1-t)^{\beta}$, evaluated at $t=1-V_I$.
If $\beta=1$, we take instead $j=2$ in \eqref{deg-3}.
Since the resulting sum is a continuous function of $\beta>0$, we can 
omit calculations and obtain the result directly by taking the limit as
$\beta\to1$ of the resulting formula for $\varpi_0[\phi\degk]$.
(The individual terms  \eqref{ex:deg:1} diverge, but not their linear
combination.) 
We thus arrive at the following result:

\begin{example}\label{ex:degree}
    Assume that $s_0=s_1=0$ and $k>s$, and consider the toll function $\phi\degk$ counting
    nodes with degree $k$ defined in \eqref{def:degk}. Then, 
for $\beta<1$,
    \begin{equation}\label{ex:deg:varpi}
        \varpi_0[\phi\degk] = \frac{\Gamma(-β)}{μ}\sum_{l=b-k}^b (-1)^{l+k-b} \binom l{b-k} \sum_{\mathclap{\substack{I\subseteq [b] \\ \abs{I} = l}}} \Ek*{V_I^β}
.    \end{equation}
If $β=1$, we instead have, by letting $\beta\to1$ in \eqref{ex:deg:varpi}
and using the Taylor expansion
$\Ek{V_I^β}=\Ek{V_I}+(\beta-1)\Ek{V_I\ln(V_I)}+\Ok*{(1-\beta)^2}$,
    \begin{equation}
        \varpi_0[\phi\degk] = \frac{1}{μ}\sum_{l=b-k}^b (-1)^{l+k-b} \binom l{b-k} \sum_{\substack{I\subseteq [b] \\ \abs{I} = l}} \Ek*{V_I\ln(V_I)}.
    \end{equation}
    Replacing $β$ with $β+2\pi im/d$ in \eqref{ex:deg:varpi} gives the
    $m$-th Fourier coefficient of $\varpi$ in the lattice case (also in
    the case $\beta=1$).
\end{example}

\subsubsection{Shape functional}

The shape functional is an example of a toll function that is unbounded. It is defined as the logarithm of the product of all subtree sizes,
thus by
\begin{equation}\label{def:shape}
    X_n(\phi\shape) = \ln \prod_{v\in T_n} n_v = \sum_{v\in T_n} \ln \kl{n_v}.
\end{equation}
The toll function is $\phi\shape(T_n) = \ln n$, and we can therefore apply Theorems \ref{thm:mean} and \ref{thm:limit}. The shape functional is closely linked with binary search trees: For a binary split tree with $s=s_0=1$ and $s_1=0$, the shape functional is given as $-\ln\Pk*{T^{\mathrm{bin}}_n = T_n\given T_n}$, where $T^{\mathrm{bin}}_n$ is a binary split tree independent of $T_n$. So if $T_n$ is itself a binary split tree, the shape functional is its information content, and the expectation of the shape functional is its (Shannon) entropy. 

\begin{example}
Theorems \ref{thm:mean} and \ref{thm:limit} apply to $X_n(\phi\shape)$, and it follows that
    \begin{equation}
        X_n(\phi\shape) = \varpi(\ln n)n^β + o(n^β),
    \end{equation}
    with a continuous, $d$-periodic function $\varpi$ bounded away from zero. If $s_0=s_1=0$,
    \Cref{thm:constants} and \Cref{prop:sum} yield
   \begin{equation}\label{ex:shape:varpi}
        \varpi_0[\phi\shape] = \sum_{k=2}^\infty 
\biggl(
\frac{\Gamma(k-β)\ln k}{\mu k!} - \frac{b}{\mu}\sum_{j=0}^{s-k}\Ek*{V^k(1-V)^j}\frac{\Gamma(k+j-β)\ln k}{k!j!}
\biggr).
    \end{equation}
    In the nonlattice case, $\varpi$ is constant; in the lattice case, its $m$-th Fourier coefficient is given by replacing $β$ in \eqref{ex:shape:varpi} with $β+2\pi im/d$. Note that the second sum in \eqref{ex:shape:varpi} vanishes if $s=1$. In the particular case of fringe trees with $s=1$ and $\beta=1$, we have
    \begin{equation}
        \varpi_0[\phi\shape] = \frac1\mu\sum_{k=2}^\infty \frac{\ln k}{k(k-1)} \approx 1.25775\mu\inv.
    \end{equation}
\end{example}
See similar results for tries in \cite{janson:trie}, simply generated trees in \cite{meir_log-product_1998}, \cite{fill_limiting_2004} and various models in \cite{wagner:additive}.

Note that when generalizing the shape functional to fractional split trees, we could have also used the number of nodes $X_n(1)$ or the number of remaining balls $X_n(\phi_b)$ from \eqref{def:phi-b} instead of $n$ in the definition of $\phi\shape$ in \eqref{def:shape}. These also allow us to apply \Cref{thm:mean}, but we can neither use \Cref{thm:limit} nor can we calculate $\varpi$ accurately using \Cref{thm:constants}. However, an approximation based on \Cref{prop:sum} is possible.

\subsubsection{Number of subtrees}

Let $\unrootedsubtrees(T_n)$ be the number of subtrees of $T_n$ and
$\rootedsubtrees(T_n)$ the number of subtrees of $T_n$ containing the
root. 
The latter are in bijection to antichains in $T_n$, as noted in
\cite{wagner:additive,wagner:additive-aofa} where these are studied for
some models of random trees. Note that a subtree $t$
containing the root consists of the root and for every $i\in [b]$, its
fringe subtree $t^i$ is either a subtree of $T_n^i$ containing the root or
empty. 
As noted by \cite{wagner:additive, wagner:additive-aofa}, 
it follows from this fact that $\ln(1+\rootedsubtrees(T_n))$ has the bounded toll function
\begin{equation}
    \phi(T_n) = \ln\kl*{1+\frac1{\rootedsubtrees(T_n)}}.
\end{equation}
Since $\unrootedsubtrees(T_n)$ is at most $X_n(1)$ (the number of nodes in $T_n$) times larger than $\rootedsubtrees(T_n)$, their logarithms have the same order.
\begin{example} As $n\to\infty$,
    \begin{equation}\label{ex:nsub:1}
        \El{\ln \unrootedsubtrees(T_n)} \sim \El{\ln \rootedsubtrees(T_n)} \sim n^β \varpi(\ln n),
    \end{equation}
    where $\varpi$ is a continuous $d$-periodic function bounded away from zero in the lattice case, and a positive constant in the nonlattice case. For split trees, i.e.\ $\beta=1$, 
and for simplicity assuming $s_0=s_1=0$ and $s=1$,
the average $\varpi_0$ (cf.\ \eqref{def:varpi0}) of $\varpi$ is bounded by
    \begin{equation}\label{ex:nsub:2}
\ln 2 ≤  
\varpi_0 ≤ \ln 2 + \sum_{k=2}^\infty \frac{\ln\kl{1+2^{-k}}}{k(k-1)μ} \approx 0.6931 + 0.1386\mu\inv;
    \end{equation}
\end{example}
\begin{proof}
    Equation \eqref{ex:nsub:1} is a direct consequence of \Cref{thm:mean}. 
For the upper bound in \eqref{ex:nsub:2},
the number of leaves in $T_n$ is exactly $n$, and 
every
subset of leaves maps injectively to a subtree containing the root and
exactly the leaves in the subset, so $\rootedsubtrees(T_n) ≥ 2^{n}$
and thus 
$\phi(T_n) \le \ln(1+2^{-n})$
\cite{wagner:additive}.
%The leaves each contribute $\ln(2)$, and the fringe subtrees of size $k$
%contribute at most $\ln(1+2^{-k})$. 
The statement \eqref{ex:nsub:2} then follows from \eqref{constants:general} 
and \Cref{thm:constants}.
\end{proof}

It follows from \cite[Theorem 4.2]{wagner:additive}  
or \cite[Theorem 1.14]{holmgren_limit_2015}
that 
for binary search trees, 
$(\ln\unrootedsubtrees(T_n)-\E[\ln\unrootedsubtrees(T_n)])/\sqrt n$
converges in distribution to $N(0,\sigma^2)$ for some $\sigma^2$ (given by a
rather complicated expression); this means asymptotic normality provided
$\sigma^2>0$, which seems to be likely but not yet shown.

\subsubsection{Independence number, domination number and similar functionals}\label{sec:indnum}
For a graph $G = (V,E)$, an \emph{independent set} is a set $U\subseteq V$, so that no nodes in $U$
are adjacent to each other.
The \emph{independence number} $\alpha(G)$ of a graph $G=(V,E)$ is then the maximal size of an independent set of $G$, i.e.
\begin{equation}\label{def:indnum}
    \alpha(G) := \max\mg{\abs{U}\mid U\subseteq V, e\not\subset U \,\forall e\in E}.
\end{equation}
The computation of $\alpha(G)$ for general graphs $G$ is NP-hard \cite{Karp_1972,Xiao_Nagamochi_2017}, but for a tree $T$ there is a simple algorithm based on the inequality
\begin{equation}
    \alpha(T) - 1 ≤ \max\mg{\abs{U}\mid \eps\notin U\subseteq T, e\not\subset U \,\forall e\in E} ≤ \alpha(T).
\end{equation}
Exactly one of the inequalities is sharp. If the left one is sharp, then the root $\eps$ is contained in all maximal independent sets of $T$ and $T$ is called \emph{essential}. A node $v$ is called essential if $T^v$ is essential. With a little bit of work (cf.\ \cite[Lemma 2.1]{Fuchs_Holmgren_Mitsche_Neininger_2021}), one can show the following:

\begin{lemma}
    A node is essential if and only if all of its children are not essential.
    The essential nodes of $T$ form an independent set $U$, with $\abs U = \alpha(T)$.  The toll function of $\alpha$ is therefore $\phi^\alpha(T) := \eins(\eps \text{ is essential})$.
\end{lemma}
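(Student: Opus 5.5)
The plan is strong induction on the height of $T$, using the inequality displayed just before the lemma. Write $\alpha_0(T)$ for the maximum size of an independent set of $T$ avoiding the root $\eps$. By definition $T$ is essential iff $\alpha_0(T)=\alpha(T)-1$, i.e.\ iff every maximum independent set contains $\eps$. The argument has three steps.

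\textbf{Step 1: two recursions.} An independent set avoiding $\eps$ is exactly a union of arbitrary independent sets of the fringe subtrees $T^i$, so
\[
\alpha_0(T)=\sum_{i}\alpha(T^i).
\]
An independent set containing $\eps$ is $\{\eps\}$ together with independent sets of the $T^i$ avoiding their roots, so the best such set has size $1+\sum_i\alpha_0(T^i)$. Hence
\[
\alpha(T)=\max\Bigl\{\textstyle\sum_i\alpha(T^i),\,1+\sum_i\alpha_0(T^i)\Bigr\}.
\]
For a leaf the sums are empty, giving $\alpha_0=0$ and $\alpha=1$, so a leaf is essential. This is consistent with the first claim, since a leaf has no children.

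\textbf{Step 2: the characterization.} For each child, $\alpha_0(T^i)=\alpha(T^i)-\eins(T^i \text{ essential})$. Substituting into the recursion, $1+\sum_i\alpha_0(T^i)=\sum_i\alpha(T^i)+1-\#\{\text{essential children}\}$. The root is essential iff the second option in the maximum is strictly larger, i.e.\ iff $1-\#\{\text{essential children}\}>0$. This holds iff no child is essential, which proves the first claim.

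\textbf{Step 3: the essential nodes form a maximum independent set.} Let $U$ be the set of essential nodes. By the first claim, an essential node has no essential child, so no edge joins two nodes of $U$ and $U$ is independent. To get $|U|=\alpha(T)$, prove by induction the stronger statement that $U(T)$ is a maximum independent set of $T$. If the root is not essential, the recursion gives $\alpha(T)=\sum_i\alpha(T^i)$, and $U(T)=\bigcup_i U(T^i)$ attains this by the induction hypothesis. If the root is essential, all children are non-essential, so $\alpha_0(T^i)=\alpha(T^i)$ and $\alpha(T)=1+\sum_i\alpha(T^i)$. Then $U(T)=\{\eps\}\cup\bigcup_i U(T^i)$ has size $1+\sum_i\alpha(T^i)$. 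It remains to check that $U(T^i)$ avoids the roots $i$, which holds because those roots are non-essential. So $U(T)$ is independent and maximum.

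Finally, since $|U|=\sum_{v\in T}\eins(v\text{ essential})$ and essentiality of $v$ depends only on $T^v$, we get $\alpha(T)=X(\phi^\alpha)$ with $\phi^\alpha(T)=\eins(\eps\text{ essential})$.

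The only delicate point is the bookkeeping in Step 3. The union of the $U(T^i)$ must be independent together with $\eps$, and this is exactly where non-essentiality of the children is used. Everything else is routine.
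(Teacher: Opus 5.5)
Your proof is correct and complete. The paper gives no argument of its own here beyond pointing to \cite[Lemma~2.1]{Fuchs_Holmgren_Mitsche_Neininger_2021} ``with a little bit of work'', and your recursions $\alpha_0(T)=\sum_i\alpha(T^i)$ and $\alpha(T)=\max\{\sum_i\alpha(T^i),\,1+\sum_i\alpha_0(T^i)\}$ are exactly that work. One small remark: Step~2 is a direct one-step computation and needs no induction; induction is only used in Step~3 to get $|U|=\alpha(T)$.
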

The independence number thus has a bounded toll function, and we can apply \Cref{thm:mean}, \Cref{thm:constants} and \Cref{prop:sum}.

\begin{example}[Independence number]
    There exists a continuous, $d$-periodic function $\varpi[\phi^\alpha]$ bounded away from zero, so that
    as $n\to\infty$,
    \begin{equation}
        \Ek*{\alpha(T_n)} = \varpi[\phi^\alpha](\ln n)n^β + o(n^β).
    \end{equation}
    If $s_0=s_1=0$ and $s=1$, the mean $\varpi_0[\phi^\alpha]$ of $\varpi[\phi^\alpha]$ is given by
    \begin{equation}
        \varpi_0[\phi^\alpha] = \varpi_0[\delta_1] + \frac1{μ}\sum_{k=2}^\infty \frac{\Gamma(k-β)}{k!}\Ek*{\phi^\alpha(T_k)}.
    \end{equation}
    For split trees ($β=1$), $\varpi_0[\delta_1] = 1$, and for $β<1$ it is given in \eqref{varpi-delta1}.
\end{example}

In \cite{Fuchs_Holmgren_Mitsche_Neininger_2021}, $\varpi[\phi^\alpha]$ is calculated explicitly for binary search trees and random recursive trees,
and it is shown that $\alpha(T_n)$ is asymptotically normal for these models. 
Earlier, \cite{janson:indnum} showed that for Crump--Mode--Jagers processes $\alpha(T_n) \sim \varpi[\phi^\alpha]$ a.s. Crump--Mode--Jagers processes encompass binary search trees, $m$-ary search trees, random recursive trees and preferential attachment trees. 
There are a few more graph parameters which are for trees affine functions of the independence number, e.g.\ vertex cover number, matching number or clique cover number, see \cite[Remark 1.7]{Fuchs_Holmgren_Mitsche_Neininger_2021}.

For a graph $G=(V,E)$ and a subset $D\subseteq V$ of nodes, we write $N(D)$ for the set of neighbors of $D$ (including $D$).
The set $D$ is called a \emph{dominating set} if $N(D) = V$. The minimal size of a dominating set is called the \emph{domination number} $D(G)$, which is also NP-hard on general graphs, see \cite[Theorem A.1]{kann} and \cite{Karp_1972}.
The dominating number of trees can be computed similarly to the independence number by assigning every subtree a `type', based on which of the inequalities in
\begin{align}
    D(T) &≤ \min\mg*{\abs{D} \mid \eps \in D \subseteq T,  N(D) = T}
    \notag \\&≤ \min\mg*{\abs D\mid D \subseteq T, N(D)\cup\mg{\eps} = T} + 1 ≤ D(T) + 1
\end{align}
is sharp.
If the first inequality is sharp (type F), a minimal dominating set cannot contain the root.
If the third inequality is sharp (type B), it is easier to dominate $T\ohne\mg\eps$ than to dominate $T$.
(It is therefore better to dominate the root from its parent.)
If the second inequality is sharp (type R), neither is the case, and we can choose a dominating set containing the root without penalties, see \cite[Proof of Theorem 1.4]{Fuchs_Holmgren_Mitsche_Neininger_2021}.

Cockayne, Goodman and Hedetniemi \cite{Cockayne_Goodman_Hedetniemi_1975} showed that the type of a node is determined by the types of its children: Parents of B have type R. If a node has at least one child of type R, but none of type B, it has type F. Else, all children are F and the node has type B.

Furthermore, \cite{Cockayne_Goodman_Hedetniemi_1975} show that nodes of type R form a minimal set dominating every node except possibly the root.
So for the bounded toll function $\phi^D(T) = \eins(T\text{ is of type R})$, we have $D(T_n) - X_n(\phi^D) \in\mg{0,1}$ and thus we can apply our results.

\begin{example}[Domination number]
    There exists a continuous, $d$-periodic function $\varpi[\phi^\alpha]$ bounded away from zero, so that
    as $n\to\infty$,
    \begin{equation}
        \Ek*{D(T_n)} = \varpi[\phi^D](\ln n)n^β + o(n^β).
    \end{equation}
     If $s_0=s_1=0$ and $s=1$, the mean $\varpi_0[\phi^D]$ of $\varpi[\phi^D]$ is given by
    \begin{equation}
        \varpi_0[\phi^D] = \frac1{μ}\sum_{k=2}^\infty \frac{\Gamma(k-β)}{k!}\Ek*{\phi^D(T_k)}.
    \end{equation}
\end{example}

For binary search trees and random recursive trees, \cite{Fuchs_Holmgren_Mitsche_Neininger_2021} show that the domination number is asymptotically normal. 
% The constant $\varpi[\phi^D]$ for random recursive trees was approximated for random recursive trees to be $\approx 0.3745$ \cite{Cooper_Zito_2009}.
% independent dominating set.

\subsection{Generalized Quicksort}\label{sec:quicksort}
As noted before, the fringe trees where \Cref{rem:beta} applies are just a slight generalization of $m$-ary search trees. The constants $\varpi[\fringephi]$ for $m$-ary search trees are well-known, in fact, it is shown in \cite{holmgren_multivariate_2017} that the number $X_n(\fringephi)$ of fringe trees of size $k$ in $m$-ary search trees is asymptotically normal for $m≤26$. 

The $m$-ary search tree is a model for a sorting algorithm, where $m-1$ elements are chosen at random and the rest of the elements is divided into $m$ sets depending on how they compare to these $m-1$ elements. The algorithm then recurses on each of the $m$ sets. Here, $m$ corresponds to $b$. If the elements to sort are chosen independently uniformly on $[0,1]$, this is a split tree with $s_0=m-1, s_1=0$ and the splitter is Dirichlet-$(1,\dots, 1)$-distributed, so that $V\sim B(1,m-1)$. For simplicity we assume $s=m-1$.

\begin{example}[$m$-ary search trees]\label{ex:mary}
    If in \Cref{rem:beta} $s_0=b-1$ and $s_1=0$ and thus $a=β=1$, we get the special case of $m$-ary search trees. Here, $\mu = \psi(b+1) - \psi(2) = H_b - 1$, where $H_b := \sum_{k=1}^b \frac1k$ is the $b$-th harmonic number, and \eqref{rem:beta:1} evaluates to
   \begin{equation}
        \label{rem:beta:2}
        \varpi[\fringephi] = 
\frac1{H_b-1}\cdot\frac{k!}{(k+2)!} = \frac1{(H_b-1)(k+1)(k+2)}.
    \end{equation}
\end{example}

In the generalized Quicksort of Hennequin \cite{Hennequin_1989,hennequin_1991}, instead of selecting just $m-1$ elements, $ma-1$ (where $a≥1$) elements are selected and sorted. Out of these elements, the $a$-th, $2a$-th, \dots, $(m-1)a$-th largest element are selected to compare all other elements with. This additional step makes sure that the sizes of the sets are more balanced and decreases $\mu$, since the splitter is instead $B(a,(m-1)a)$ Beta-distributed. All the selected elements have to be distributed using $s_1$ and $s_0$, hence $s_0+ms_1=ma-1=s$. % If the elements that were first selected are then distributed evenly into the subsets or retained in the root only affects $s_0$ and $s_1$.
If $b=2$, then this is known as median-of-$(2a-1)$ Quicksort.

\begin{example}[Generalized Quicksort of Hennequin \cite{Hennequin_1989,hennequin_1991}]\label{ex:hennequin}
    If in \Cref{rem:beta} $β=1$ and $a = \frac{s_0}{b-1}+s_1$ is an integer, then the tree corresponds to the generalized Quicksort of Hennequin.
    Here $\mu = H_{ba} - H_a$ and \eqref{rem:beta:1} evaluates to
    \begin{equation}
        \varpi[\fringephi] = \frac1{(H_{ba}-H_a)(a+k-s_1)(a+k-s_1+1)}
    \end{equation}
    for $k>s$ or $s\in\mg{ba-1,ba}$ and $k>s_1$.
\end{example}

\subsection{Examples of fractional fringe trees}

We close this example section with a few motivating examples for the study of fractional fringe trees.

\subsubsection{Number of maxima}\label{sec:maxima}
A point $(x_1,\dots, x_d)$ in $\RR^d$, $d≥1$, is said to \emph{dominate} another point $(y_1,\dots, y_d)$ if $x_i≥y_i$ for every $i=1,\dots, d$.
For a set of points $p_1,\dots, p_n$ on $\RR^d$, a point is called a \emph{(coordinatewise) maximum} if no other point in the set dominates it.
The study of these maxima has many applications, e.g.\ in graph drawing or heuristic search in artificial intelligence. See \cite{chen_efficient_2003} for a comprehensive overview over applications and algorithms.

A special case is the number of coordinatewise maxima if $p_1,\dots, p_n$ are independently uniformly distributed on the triangle which is the convex hull of $(0,0), (1,0), (0,1)$ in $\RR^2$.
This can be described with a fractional fringe tree:
Consider the point $p_0 = (x_0, y_0)$ where $m = x_0+y_0$ is maximal. This point must be a coordinate maximum, and all other points will have $x+y<m$. Using $p_0$, we can divide the triangle into four smaller areas as in \Cref{fig:coord-max}:
A strip $x+y>m$, in which no points will be found, two triangles, one with $x>x_0, x+y<m$ and one with $y>y_0, x+y<m$, in which further maxima will be, and a rectangle $x≤x_0, y≤y_0$, in which the points are dominated by $p_0$.

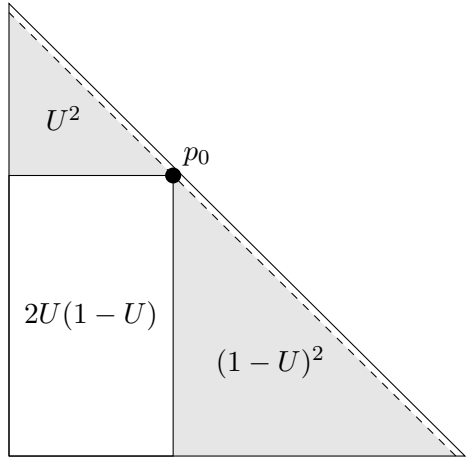
\begin{figure}
    \centering
    \begin{tikzpicture}[scale=6]
        \coordinate (p0) at (0.36,0.62);
        \path[fill=gray!20] (p0|-0,0) -- (p0) -- (0.98,0);
        \path[fill=gray!20] (0,0|-p0) -- (p0) -- (0,0.98);
        \draw (0,0)--(1,0)--(0,1)--(0,0);
        \draw[dashed] (0.98,0)--(0,0.98);
        \draw[draw=black] (0,0) rectangle (p0);
        \fill (p0) circle (0.5pt) node[above right] {$p_0$};
        \node  at ($0.5*(p0)$) {$2U(1-U)$};
        \node  at ($1/3*(p0) + 1/3*(0,0|-p0) + 1/3*(0, 1)$) {$U^2$};
        \node  at ($1/3*(p0) + 1/3*(p0|-0,0) + 1/3*(1, 0)$) {$(1-U)^2$};
    \end{tikzpicture}
    \caption{The three areas in which the point with maximal sum splits up the triangle.}\label{fig:coord-max}
\end{figure}
The important observation now is that the two triangles are scaled-down versions of the original triangle, and a point is a maximum if and only if it is either $p_0$ or a maximum in either of the triangles. Given $m$, the value $U:=\frac{x_0}m$ is uniformly distributed on $[0,1]$ because of symmetry and the other points are uniformly distributed on the convex hull of $(0,0), (0,m), (m,0)$. The sizes of the two smaller triangles are $U^2$ and $(1-U)^2$ of this triangle, so the maxima form a fractional split tree with $s=s_0=1$, $s_1=0$ and splitter $(U^2, (1-U)^2)$. Since $U^{2\cdot\frac12} + (1-U)^{2\cdot\frac12} = 1$ is non-random, $\beta=\frac12$ (see \eqref{def:beta}) and the variance is of order smaller than $\sqrt n$ by \Cref{cor:lln}.

This exponent of $\frac12$ is well known, in fact for all two-dimensional polygons, the order of the number of maxima can be either constant, $\ln n$ or $\sqrt n$ depending on whether there are edges in the upper-right corner, see \cite{golin_maxima_1993}. The distribution is fully described in \cite{bai_limit_2001}, where the $\sqrt n$ term can be reduced to this problem on the triangle.

The fringe trees in the tree have an interesting interpretation:
Order all the maxima by their $x$-value, forming a sequence $m_1,\dots, m_k$. This is also the in-order traversal of the tree.
Parent nodes have higher sums than child nodes, so a maximum $m_i$ is an ancestor of $m_j$ for $i<j$ if and only if $m_i ≥ \max\mg{m_i,m_{i+1}\dots, m_j}$ 
(analogously for $j<i$) and the fringe tree of $m_i$ is the unique longest subsequence in which $m_i$ is the maximum. The size of the fringe tree can thus be considered the `influence' of the maximum.
\begin{theorem}
    Let $X_n(\fringephi)$ be the number of fringe trees of size $k$ in 
the tree of the coordinatewise maxima. Then, for $k≥1$,
    \begin{align}
        X_n(\fringephi) &= \frac{\Gamma\kl*{k+\frac12}}{(k+1)!}\sqrt n + o\kl*{\sqrt n} \quad\text{and} \label{ex:maxima:fringe} \\
        X_n(1) &= \sqrt{\pi n} + o\kl*{\sqrt n} \label{ex:maxima:nodes}
    \end{align}
    in probability, where $X_n(1)$ is the number of nodes, i.e., the number of coordinatewise maxima.
\end{theorem}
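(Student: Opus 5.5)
The plan is to recognise the tree of maxima as a fractional split tree to which \Cref{rem:beta} applies, and then pass from the mean to convergence in probability with \Cref{cor:lln}.

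\textbf{Parameters.} As described above, the tree has $b=2$, $s=s_0=1$, $s_1=0$ and split vector $(U^2,(1-U)^2)$ with $U$ uniform, so $\beta=\frac12$. A uniformly chosen component $V$ is distributed as $U^2$, by the symmetry $U\leftrightarrow 1-U$. Since $\Pk{U^2\le v}=\sqrt v$, the density of $V$ is $\frac1{2\sqrt v}$, so $V\sim B(\frac12,1)$. This is exactly $B(a,s_0+(b-1)s_1)$ with $a=\frac12$. The side condition $a\le \frac{s_0}{b-1}+s_1=1$ holds, with strict inequality, consistent with $\beta<1$. Because $\ln V=2\ln U$ has a continuous law, we are in the nonlattice case, so all $\varpi$'s are constants.

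\textbf{Fringe-tree constants.} By \eqref{rem:beta:mu},
\begin{equation*}
\mu=\psi(a+\beta+1)-\psi(a+\beta)=\psi(2)-\psi(1)=1 .
\end{equation*}
The hypothesis $s\le s_0+bs_1$ of \Cref{rem:beta} reads $1\le 1$, and $k\ge s_1=0$ is automatic, so \eqref{rem:beta:1} applies for every $k\ge1$. It gives
\begin{equation*}
\varpi[\fringephi]=\frac{\Gamma(k+\frac12)}{\Gamma(k+2)}=\frac{\Gamma(k+\frac12)}{(k+1)!},
\end{equation*}
which is the constant in \eqref{ex:maxima:fringe}.

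\textbf{Number of nodes.} For the toll function $1$ we have $\phi_k=1$ for $k\ge1$. \Cref{prop:sum} then gives $\varpi[1]=\sum_{k\ge1}\Gamma(k+\frac12)/(k+1)!$. Write each term as a Beta integral divided by $\Gamma(\frac32)=\frac{\sqrt\pi}2$:
\begin{equation*}
\frac{\Gamma(k+\frac12)}{\Gamma(k+2)}=\frac{B(k+\frac12,\frac32)}{\Gamma(\frac32)}=\frac{1}{\Gamma(\frac32)}\int_0^1 t^{k-\frac12}(1-t)^{\frac12}\,\mathrm dt .
\end{equation*}
By monotone convergence, summing the geometric series inside the integral gives
\begin{equation*}
\frac{1}{\Gamma(\frac32)}\int_0^1 t^{\frac12}(1-t)^{-\frac12}\,\mathrm dt=\frac{B(\frac32,\frac12)}{\Gamma(\frac32)}=\frac{\pi/2}{\sqrt\pi/2}=\sqrt\pi .
\end{equation*}
This yields the mean in \eqref{ex:maxima:nodes}.

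\textbf{From means to convergence in probability.} Since $U+(1-U)=1$, we have $\sum_i V_i^\beta=1$ a.s. Hence \Cref{cor:lln} gives $(X_n-\Ek{X_n})/\sqrt n\to0$ in $L_2$, and therefore in probability. Combined with $\Ek{X_n}=\varpi\sqrt n+o(\sqrt n)$ from \Cref{thm:mean}, this gives both statements.

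\textbf{Main obstacle.} The only delicate point is the hypothesis of \Cref{thm:limit} that $\phi(v)$ is a function of $n_{v1},\dots,n_{vb}$. For $\phi\equiv1$ this is trivial. For $\fringephi$, however, $n_v$ is not determined by the children's counts in a fractional tree, since balls can vanish. I will avoid this by re-indexing the sum over parents:
\begin{equation*}
X_n(\fringephi)=\eins(n=k)+X_n(\tilde\phi),\qquad \tilde\phi(T):=\sum_{i=1}^2\eins(n_i=k).
\end{equation*}
The new toll $\tilde\phi$ depends only on the children's counts and is bounded, so \Cref{cor:lln} applies to it. The extra term $\eins(n=k)$ is $O(1)$, so $X_n(\fringephi)$ obeys the same weak law with the same constant.
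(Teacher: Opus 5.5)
Your proposal is correct and follows essentially the paper's route. You identify $V\sim B(\frac12,1)$, apply \Cref{rem:beta} with $\mu=\psi(2)-\psi(1)=1$, get $\varpi[1]=\sqrt\pi$ by summing the fringe constants through a Beta integral (one of the two routes the paper indicates), and pass to convergence in probability with \Cref{cor:lln}, which applies because $U+(1-U)=1$.

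Your re-indexing through $\tilde\phi$ is valid but unnecessary: in the recursion \eqref{eq:xn-recursion} the toll $\fringephi(T_n)=\eins(n=k)$ is deterministic, and the paper explicitly lists $\fringephi$ among the admissible tolls of \Cref{thm:limit}. The positivity of $\varpi$ that \Cref{thm:limit} requires is immediate from the constants you computed.
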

\begin{proof}
    Here, $V\sim B\kl*{\frac12,1}$, so we can apply \Cref{rem:beta} to calculate $\varpi[\fringephi]$. Note that $\mu = \psi(2) - \psi(1) = 1$
    and \eqref{ex:maxima:fringe} follows from \eqref{rem:beta:1}. For \eqref{ex:maxima:nodes}, $\varpi[1]$ can be immediately obtained from \Cref{thm:s0} plus \eqref{rem:beta:moment} in the proof of \Cref{rem:beta} or by summing up \eqref{ex:maxima:fringe}
as in \eqref{eq:sum:hyper}.
\end{proof}
Note that \eqref{ex:maxima:nodes} is known much more precisely: In \cite{bai_limit_2001} it is shown that
\begin{equation}
    \Ek*{X_n(1)} = \frac{\sqrt{\pi}n!}{\Gamma\kl*{n+\frac12}} - 1,
\end{equation}
the variance is estimated to $\Ok*{n^{-K}}$ precision for any $K>0$ and asymptotic normality of $X_n(1)$ is shown.

\subsubsection{Partial Search in Quadtrees}
A (point) quadtree (short for quadrant tree) is a data structure to store a set of points $\mg{x_1,\dots, x_n} \subset \RR^d$ in a way that allows for efficient searches in space.
The structure is build recursively as follows: The first element $x_1$ is chosen as the \emph{pivot element} and stored in the root of the tree. All other elements are compared coordinate-wise with this pivot and sorted into $2^d$ sets, labelled by $\mg{0,1}^d$. An element $y\in \RR^d$ is sorted into the set
with label $\kl[\big]{\eins(y_1>(x_1)_1),\dots, \eins(y_d>(x_1)_d)}$. The procedure is repeated on the $2^d$ sets, which form the subtrees of the root, until no elements are left.

If we choose $x_1,\dots, x_n$ i.i.d.\ uniformly on $[0,1]^d$, this forms a split tree with $b=2^d$, $s=s_0=1$, $s_1=0$ and splitters
$V_I = \prod_{i=1}^d \kl*{U_i I_i + (1-U_i)(1-I_i)}$ for $I\in\mg{0,1}^d$, with $U_1,\dots, U_d$ i.i.d.\ distributed on $[0,1]^d$. If one now wants to find an element $y$, one can compare the pivot with the element and then continue to the set with label $\kl[\big]{\eins(y_1>(x_1)_1),\dots, \eins(y_d>(x_1)_d)}$ until the element is found.

Something interesting happens if we only know a few of the coordinates of the element $y$; without loss of generality assume that we know the first $s<d$ coordinates $y_1,\dots, y_s$. This is called a partial search, and we have to continue to the $2^{d-s}$ sets with labels $I$ for which $I_i = \eins(y_i>(x_1)_i)$ for $i≤s$. Because the splitting depends on $y$, this is not a fractional split tree, and even if we choose $y$ to be uniformly random, it still would not be one because of the branches then depend on each other. However, since the independence of the branches does not affect the mean, we still get the result that
\begin{example}
    Let $T_n$ be the nodes visited on a partial search on a $d$-dimensional quadtree, with $s<d$ coordinates given and chosen uniformly at random. Let $\phi_n$ be a toll function with $\phi_n = \Ok*{n^{β-δ}}$ for some $δ>0$,
    where $0 < \beta < 1$ is the solution to the equation
    \begin{equation}\label{ex:partial-search:beta}
        2^d = (β+1)^{d-s}(β+2)^s.
    \end{equation}
    Then there exist a constant $\varpi_{d,s}[\phi] > 0$
    \begin{equation}
        \Ek*{X_n(\phi)} = \varpi_{d,s}[\phi]n^β + o(n^β).
    \end{equation}
\end{example}
\begin{proof}
    This is a simple application of \Cref{thm:mean}, the only thing left to explain is the formula \eqref{ex:partial-search:beta}. Because we chose the coordinate uniformly, the splitter of a branch is of the form $\prod_{i=1}^{s} \widehat U_i \prod_{i=s+1}^d U_i$, where $\widehat U_i$ is the size-biased version of $U_i$. The reason for the size biasing is that the probability that a uniformly chosen coordinate falls into an interval scales linearly with the size of the interval. When the interval has size $U_i \sim B(1,1)$, that means that the interval a random coordinate is in has size $\widehat U_i \sim B(2,1)$. The equation for $\beta$ in \eqref{def:beta} then becomes
    \begin{equation}
        2^{s-d} = \Ek*{V^β} = \Ek*{U^β}^{d-s}\Ek*{\widehat U^β}^{s} =  \kl*{β+1}^{s-d}\fracc2{β+2}^s.
    \end{equation}
    Dividing both sides by $2^s$ and inverting gives \eqref{ex:partial-search:beta}.
\end{proof}
In the case where $\phi_n = 1$ is the toll function counting nodes, this has
been shown by \cite{Chern_Hwang_2003}, with the constant $\varpi_{d,s}[1]$
explicitly given and the error term improved to $\Ok*{1+n^{\Re β_2}}$, where
$β_2$ is the complex solution to \eqref{ex:partial-search:beta} with the
second-largest real part ($β$ itself having the largest real part).

For partial search, $\sum_{I\in \mg{0,1}^{d-s}} V_I^β$ is random, so we expect that $X_n(\phi)$ has variance of order $n^{2β}$. However, we cannot apply \Cref{thm:limit} because the branches are dependent if the coordinates $y_1,\dots, y_s$ are chosen at random.
An alternative approach to taking random coordinates $y_1,\dots, y_s$ is to consider instead the function mapping $(y_1,\dots, y_s)$ to the number of nodes. This was done by Broutin, Neininger and Sulzbach \cite{Broutin_Neininger_Sulzbach_2013, broutin_partial_2012}, who show that the variance is indeed of order $n^{2β}$ and this process converges when scaled down by $n^{β}$.

\subsubsection{Random Disk Triangulation}
\textcite{Curien_LeGall_2011} studied a random recursive triangulation of the closed unit disk $\disk := \mg{z\in \CC\mid \abs z≤1}$ as follows:
\begin{itemize}
    \item Start with a empty set $L_0 := \emptyset$
    \item In the $n$-th step, draw two random elements uniformly $x,y$ from the unit circle $S^1 := \mg{z\in \CC\mid \abs z=1}$ independently.
    \item If the chord $\overline{xy} := \mg{λx+(1-λ)y \mid λ\in[0,1]}$ does not intersect with any chord already in $L_{n-1}$, then $L_n :=L_{n-1} \cup \overline{xy}$, else $L_n := L_{n-1}$.
\end{itemize}
If we continue this procedure infinitely to some closed set $L_\infty := \overline{\cup_{n\in\NN} L_n}$, we obtain an infinite triangulation of $\disk$, which is where the name of this construction comes from.

The chords in $L_n$ can also be described using a split tree. Let $\overline{x_1y_1}, \dots, \overline{x_n y_n}$ be the chords that are randomly drawn in each step.
 When the first chord $\overline{x_1y_1}$ is added, it splits $S^1$ into two segments,
with relative length $U$ and $1-U$, where $U\sim\unif[0,1]$, and every subsequent chord must have both end points in the same segment, or it would intersect with $\overline{x_1y_1}$. Since the points are drawn uniformly, each chord has a $U^2$ chance to have both points in the first segment and a $(1-U)^2$ chance to have both in the second segment. All other chords are discarded. The chords on the two segments now behave independently and in the same way as on $S^1$, so we have a fractional split tree with splitters $(U^2, (1-U)^2)$, $s_0=s=1$ and $s_1=0$. This is the same fractional split tree as in the number of maxima from \Cref{sec:maxima}.

\section{Asymptotic mean}\label{sec:proof}

In this section, we prove \Cref{thm:mean}. The outline of the proof is as follows: We start by restricting ourselves to toll functions $\phi$ where $\phi_n$ is constant for $n>s$. In \Cref{sec:upper-bound} we derive an upper bounds on $\Ek*{X_n}$. Then, we prove \Cref{thm:mean} for those toll functions in \Cref{sec:constant-phi}. In the following section \Cref{sec:other-toll}, we extend this to general toll functions. In the last Section \ref{sec:lower-bound}, we show that the constants $\varpi$ in \Cref{thm:mean} are bounded away from zero.
As we will be only be concerned with the mean in this section, we do not need to care for the difference between $\phi(T_n)$ and its expectation $\phi_n$; and might implicitly assume that $\phi(T_n) = \phi_n$ only depends on the number of balls.

\subsection{Upper bound on the mean}\label{sec:upper-bound}
We start by proving that $\Ek*{X_n} = \Ok*{n}$. We later will sharpen this to $\Ek*{X_n} = \Ok*{n^β}$.

\begin{lemma}\label{lem:change}
    Assume that $\phi_n$ is constant for $n>s$.
    For $n,m \in \NN$,
    \begin{equation}
        \Ek*{X_{m+n}} = \Ek*{X_m} + \Ok*{n}.
    \end{equation}
\end{lemma}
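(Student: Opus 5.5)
The plan is to build $T_{m+n}$ and $T_m$ on one probability space and bound the expected difference of the additive functionals by $O(n)$. I will use the same split vectors $\V^{(u)}$ for all $u\in[b]^*$ and add the balls sequentially. The natural way is to realize the multinomial splits in \eqref{def:nv} or \eqref{def:nv-frac} by routing balls one at a time. Each ball that reaches an internal node $v$ is retained (as one of the $s_0$ balls) or sent to a child; that child is chosen either deterministically (for the $s_1$ redistributed balls) or by an independent draw from $\V^{(v)}$, which in the fractional case may also delete the ball.

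The first concrete step is to check that the routing is monotone. Removing the last $n$ balls from the input should only decrease every $n_v$, so that $n^{(m)}_v\le n^{(m+n)}_v$ for all $v$. In the second step I would count affected nodes. Since $\phi_n$ is constant for $n>s$, a node contributes differently to $X_{m+n}$ and $X_m$ only if $n_v$ differs and at least one of the two values lies in $\{0,1,\dots,s\}$. Consequently
\[
\abs{X_{m+n}-X_m}\le C\,\#\{v:\ n^{(m)}_v\ne n^{(m+n)}_v,\ \min(n^{(m)}_v,n^{(m+n)}_v)\le s\},
\]
where $C=2\max_{k\le s+1}\abs{\phi_k}$.

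The third step is to charge each such node to an extra ball. Take a node $v$ with $n^{(m)}_v\le s<n^{(m+n)}_v$, or with both values at most $s$ but different. At least one of the $n$ new balls must end up in the subtree of $v$, and it ends up in some node of that subtree. Each new ball, however, can be ``blamed'' for all the ancestors on its path, and paths may be long. What saves the bound is that only the lowest nodes of the path are affected. If $n^{(m)}_v\le s$ in the smaller tree, then $v$ is a leaf or absent there. Among the nodes where both trees have more than $s$ balls, the contributions coincide. So only the topmost node of each newly created branch is of the type $\min\le s$.

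The main obstacle is this last step. When a node is first split in the larger tree, the extra ball triggers a redistribution of up to $s+1$ balls, and those balls can cascade. A cleaner way to get $O(n)$ is therefore to control the new part directly. The new nodes form a subtree hanging below the leaves of $T_m$, and I would bound its total size in expectation by $O(n+\text{number of affected leaves})$, which is at most $O(n)$. For this, apply the bound $\E[\#\text{nodes of }T_k]=O(k)$ to each fringe tree grown from a former leaf with $j\le s$ old balls plus $i$ new ones. The bound $O(k)$ is the case of linear node count: the number of internal nodes is at most about $k/(s+1-s_0-bs_1)$ when that denominator is positive, and otherwise it is handled via the decay $\P(V_{(1)}=1)<1$. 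Equivalently, I would try induction on $n$, proving $\E[X_{m+1}]-\E[X_m]=O(1)$ uniformly in $m$ by following the single new ball down to the leaf it reaches. There, the created subtree is a copy of $T_{k}$ with $k\le s+1$, whose expected functional is a bounded constant. That handles the cascade because a leaf holds at most $s$ balls.
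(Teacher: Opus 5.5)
Your final route is exactly the paper's proof: add one ball at a time and follow it to the node with $j\le s$ balls where it stops (or until it vanishes). The fringe tree there becomes a fresh copy of $T_{j+1}$ with $j+1\le s+1$, so the expected change is bounded uniformly in $m$, using boundedness of $\phi$ and finiteness of $\E X_{s+1}$ (for $s_0=s_1=0$, the paper gets the latter from the geometric chain of empty nodes). Drop the earlier charging detour, whose claim that only the topmost node of a new branch has $\min(n^{(m)}_v,n^{(m+n)}_v)\le s$ is false, since every newly created node has $n^{(m)}_v=0$; it is the per-ball induction that actually handles the cascade.
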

\begin{proof}
    It suffices to prove the statement for $n=1$ and iterate.
    Adding a ball to an internal node $v$ either increases the number of balls in one of the subtrees of $v$ or the new ball disappears. Since $\phi_n$ is constant for $n>s$, the mean of $\phi(v)$ is unchanged. When a ball reaches a leaf $l$ with $m$ balls, the cost of the leaf changes from $\phi_m$ to $\phi_{m+1}$ if $m<s$, else the leaf reaches $s + 1$ balls and new nodes are created. If $s_0 > 0$ or $s_1>0$, at most $b$ new nodes are created. In the case of $s_0=s_1=0$, additionally a geometrically distributed number of empty internal nodes is created for every step that all $s + 1$ balls have in common. The number of new nodes created is bounded in expectation, and since $\phi$ is also bounded, the expected increase of $X_n$ is bounded by a constant.
\end{proof}

Let $B ≥ 1$ and define $P$ as the set of nodes $v \in [b]^*$ with $M^n_v≥B$. Then define $R := P[b] \setminus P$, i.e.\ the children of $P$ that are not in $P$ themselves. Therefore, $\abs R ≤ b\abs P$ and $M^n_v < B$ for all $v\in R$. Nodes in $R$ have mass lower than $B$, but their parents have mass at least $B$, and every node not in $P$ is the descendant of exactly one node in $R$. We will later vary $B$ and note that all estimates will hold uniformly in $B≥1$ except if explicitly stated.
The natural $\sigma$-algebra for $P$ and $R$ is 
\begin{equation}\label{def:gc}
    \Gc := \sigma(M^1_v \mid v\in [b]^*),
\end{equation}
the $\sigma$-algebra of all masses. When conditioning on $\Gc$, the term $n'_v$, defined in \eqref{def:ns} is binomially $\Bin(n, M^1_v)$ distributed as noted before, and we can consider the $\Gc$-measurable set $R$ as constant.
These two facts will be continuously used throughout the proofs of the following lemmata.

\begin{lemma}\label{lem:r}
     In the nonlattice case, as $n\to\infty$, 
    \begin{equation}\label{lem:r:statement}
        \Ek*{\abs P} \sim \frac1{\mu\beta}\fracc nB^\beta.
    \end{equation}
    In the lattice case with $d = \gcd\kl{\ln V}$,
    \begin{equation}\label{lem:r:recursive}
        \Ek*{\abs P} \sim \frac d{\mu\kl{1-e^{-\beta d}}}e^{\beta\floord{\ln\frac nB}}.
    \end{equation}
    Because $\abs R≤ b\abs P$, we have $\Ek*{\abs R} = \Ok*{\fracc nB^β}$ in both cases.
\end{lemma}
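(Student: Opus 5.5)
The plan is to write $\Ek*{\abs P}$ as a renewal function and then apply the renewal theorem. The nodes $v\in[b]^*$ with positive mass form a branching random walk in which each node $v$ has position $S_v:=-\ln M^1_v$. Since $P=\{v: M^n_v\ge B\}$ and $M^n_v=nM^1_v$, we have $v\in P$ if and only if $S_v\le t:=\ln\frac nB$. Taking expectations and summing over generations gives the many-to-one formula
\begin{equation*}
    \Ek*{\abs P}=\sum_{m\ge0}\E\sum_{\abs v=m}\eins(S_v\le t).
\end{equation*}

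The first step is to tilt by $e^{-\beta S_v}=(M^1_v)^\beta$. By \eqref{def:beta}, $\E\sum_{\abs v=m}(M^1_v)^\beta=1$ for every $m$. Iterating \eqref{eq:v-bias} along a path of length $m$ then gives
\begin{equation*}
    \E\sum_{\abs v=m} f(S_v)=\Ek*{e^{\beta W_m}f(W_m)},
\end{equation*}
where $W_m=\sum_{j=1}^m(-\ln\widehat V_j)$ is a random walk whose i.i.d.\ steps are distributed as $-\ln\widehat V$. These steps are positive and have mean $\mu\in(0,\infty)$ by \eqref{def:mu}. Consequently
\begin{equation*}
    \Ek*{\abs P}=\int_{[0,t]}e^{\beta y}\,\mathrm dU(y)=e^{\beta t}\int_{[0,t]}e^{-\beta(t-y)}\,\mathrm dU(y)=e^{\beta t}\,(U*G)(t),
\end{equation*}
where $U=\sum_m\P_{W_m}$ is the renewal measure and $G(x)=e^{-\beta x}\eins(x\ge0)$, using the convolution notation \eqref{def:convolution}.

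The second step applies the key renewal theorem. The function $G$ is directly Riemann integrable, being monotone and integrable on $[0,\infty)$.

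\emph{Nonlattice case.} The step distribution $-\ln\widehat V$ is nonlattice because $\widehat V$ is absolutely continuous with respect to $V$ with a positive density on $\{V>0\}$, so it has the same support. The theorem therefore gives
\begin{equation*}
    (U*G)(t)\to\frac1\mu\int_0^\infty e^{-\beta x}\,\mathrm dx=\frac1{\mu\beta},
\end{equation*}
which is \eqref{lem:r:statement}.

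\emph{Lattice case.} Here $-\ln\widehat V\in d\ZZ$, and $d$ is again the maximal span for the same support reason. The lattice renewal theorem gives $U(\{kd\})\to d/\mu$, so
\begin{equation*}
    (U*G)(t)\sim\frac d\mu\sum_{j\ge0}e^{-\beta(\restd t+jd)}=\frac d{\mu(1-e^{-\beta d})}e^{-\beta\restd t}.
\end{equation*}
Multiplying by $e^{\beta t}$ yields $e^{\beta\floord t}$ and hence \eqref{lem:r:recursive}. In both cases $\Ek*{\abs R}\le b\Ek*{\abs P}=\Ok*{(n/B)^\beta}$.

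The step needing the most care is the many-to-one identity. It must be checked against the definition of $\widehat V$ through \eqref{def:v-bias}, with $V=V_I$ and $I$ uniform, including the factor $b$. Independence of the split vectors along a path gives the product structure, and nodes with zero mass contribute nothing since they are never in $P$. Beyond that, one must remember that the asymptotics are as $t=\ln\frac nB\to\infty$, and that the stated uniformity in $B$ concerns only the $\Ok{\cdot}$ bound. That bound follows from the renewal-function estimate $U([y,y+1])\le C$ for all $y$, which gives $(U*G)(t)\le C'$ for all $t\ge0$ and hence $\Ek*{\abs P}\le C'(n/B)^\beta$ uniformly.
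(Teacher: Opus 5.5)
Your proof is correct and is essentially the paper's argument. The paper derives the excessive renewal equation $U(t)=\eins(t\ge0)+(b\P_{-\ln V}*U)(t)$ by conditioning on the root's splitter, tilts by $V^\beta$ to the proper renewal equation driven by $-\ln\widehat V$, and applies the key renewal theorem with $e^{-\beta t}\eins(t\ge0)$; your $(U*G)(t)$ is exactly the paper's tilted $\widehat U(t)=e^{-\beta t}U(t)$, and your many-to-one expansion is the same argument unrolled (your explicit checks that tilting preserves the lattice span and that the $O\bigl((n/B)^\beta\bigr)$ bound is uniform in $B$ are details the paper leaves implicit).
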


\begin{proof}
Define $U(t)$ as the expected number of nodes $v$ with $M_v^n ≥ ne^{-t}$ or equivalently $M_v^1 ≥ e^{-t}$. Thus 
    $\Ek{\abs P} = U\kl*{\ln\frac nB}$. Then, by conditioning on a random splitter $V$ of the root,
    \begin{equation}\label{lem:r:u}
        U(t) = \eins(t≥0) + b\Ek*{U(t+\ln V)} = \eins(t≥0) + (b\P_{-\ln V} * U)(t).
    \end{equation}
    This has the form of a renewal equation;  cf.\ \Cref{sec:rt}.
Because $b\P_{-\ln V}$ has total mass $b>1$, this is a so-called
\emph{excessive} renewal equation, which can be shifted to a \emph{proper}
renewal equation using a probability measure by size-biasing $V$ with
$e^{\beta\ln V} = V^\beta$, which gives us the probability measure $\P_{-\ln
  \widehat V}$, see \eqref{def:v-bias}
and \Cref{sec:rt}.
The function $e^{-\beta t}\eins(t\ge0)$ is directly Riemann integrable,
and thus,     
the key renewal theorem \Cref{thm:krt} applies and shows that $\widehat U(t)$ converges for $t\to \infty$ in the nonlattice case, such that
    \begin{equation}\label{ut-nonlattice}
        U(t) \sim e^{\beta t} \frac1\mu \int_0^\infty e^{-\beta u}\mathrm du = \frac{e^{\beta t}}{\mu\beta}.
    \end{equation}
    In the lattice case, we instead have
    \begin{equation}\label{ut-lattice}
        U(t) \sim e^{\beta t} \frac1\mu \sum_{u=0}^\infty d e^{-\beta (du+\restd t)} = \frac{de^{\beta\floord t}}{\mu(1-e^{-\beta d})}.
    \end{equation}Note that \eqref{ut-nonlattice} is the limit of \eqref{ut-lattice} for $d\to0$. Plugging in $t=\ln\frac nB$ proves \eqref{lem:r:statement} and \eqref{lem:r:recursive}.
\end{proof}

The term $\sum_{r\in R}\abs{n''_r}$ (defined before \eqref{def:nss}) accounts for the effects of balls retained via $s_0$ or redistributed via $s_1$ in $P$.
By recursively expanding \eqref{def:nss}, we obtain
\begin{equation}\label{eq:nss}
 \E \sum_{r\in R}\abs{n''_r} ≤ 2(s+1)\E \abs{P} = \Ok*{n^\beta B^{-\beta}}.
\end{equation}

For fractional split trees with $β<1$, \Cref{lem:change} is not strong enough, so we improve the bound:

\begin{lemma}\label{lem:order}
    Assume that $\phi_n$ is constant for $n>s$.
    For $n,m \in \NN$,
    \begin{equation}
        \Ek*{X_{m+n}} = \Ek*{X_m} + \Ok*{n^\beta}.
    \end{equation}
    In particular, $\Ek*{X_n} = \Ok*{n^\beta}$.
\end{lemma}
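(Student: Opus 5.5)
We want $\E X_{m+n}-\E X_m=O(n^\beta)$ uniformly in $m$. The plan is to couple $T_m$ and $T_{m+n}$ by adding the $n$ extra balls one by one (or jointly), and to use the mass decomposition with the cutoff sets $P$ and $R$ defined from masses $M_v^n$ with $B=1$. The point is that only nodes reached by the $n$ extra balls can change their contribution.

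The coupling is as follows. Realize both trees from the same split vectors $\V^{(u)}$. The balls of $T_{m+n}$ are the balls of $T_m$ plus $n$ extra balls. Each extra ball travels down the tree, choosing child $i$ with probability $V_i$ or disappearing. Conditional on $\Gc$, the number of extra balls that reach $v$ is dominated by $\Bin(n,M^1_v)$ plus the boundary correction $n''_v$ (the retained and redistributed balls $s_0,s_1$). Since $\phi_k$ is constant for $k>s$, a node $v$ whose ball count changes only matters if its count crosses the threshold $s$ or it is a leaf/new node. As in \Cref{lem:change}, each extra ball entering a subtree $T^v$ changes $\E X$ restricted to that subtree by at most a constant $C$. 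Hence the difference $\E X_{m+n}-\E X_m$ is bounded by a constant times the expected number of ``active'' subtrees hit by extra balls.

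To bound this, split the nodes along $P$ and $R$ (defined with masses $M^n_v\ge 1$). For $v\in P$, the internal-node contributions are unchanged except for nodes with at most $s$ balls. However, every node of $T_{m+n}$ at which something changes either lies in $P$ or is in the subtree of some $r\in R$ receiving at least one extra ball. The changes at nodes in $P$ contribute $O(\E|P|)=O(n^\beta)$ by \Cref{lem:r}. For $r\in R$, apply \Cref{lem:change} to the subtree at $r$: the change there is at most $C\,\E[N_r]$, where $N_r$ is the number of extra balls reaching $r$. Now $\E[N_r\mid\Gc]\le M^n_r+|n''_r|$, so summing over $R$ gives
\[
\E\sum_{r\in R}N_r\le \E\sum_{r\in R}M^n_r+\E\sum_{r\in R}|n''_r|.
\]
The second term is $O(n^\beta)$ by \eqref{eq:nss}. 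For the first term, note that $M^n_r<1$ for $r\in R$, so $\sum_{r\in R}M_r^n\le|R|$, and $\E|R|\le b\,\E|P|=O(n^\beta)$.

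The main obstacle is making the ``only subtrees hit by extra balls change'' step rigorous. The difficulty is the interplay between balls retained and redistributed via $s_0,s_1$ and the fact that extra balls at a node in $P$ can reshuffle which original balls go where, since the multinomial is over $n_v-s_0-bs_1$. I would handle this by adding balls one at a time, so that each addition follows a single path and is exactly the one-step coupling in the proof of \Cref{lem:change}. Then $\E X_{m+n}-\E X_m=\sum_{j<n}(\E X_{m+j+1}-\E X_{m+j})$. Each step's increment is bounded by $C$ times the probability that the new ball reaches a node at which the change is nonzero. Such a node is one with at most $s$ balls, hence one of mass-order $O(1)$ at that stage. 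Summing these probabilities over $j$ reproduces the $\sum_{r\in R}$ bound above. Finally, taking $m=0$ gives $\E X_n=O(n^\beta)$.
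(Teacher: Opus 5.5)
Your proposal is essentially the paper's proof. You define $P$ and $R$ through the masses $M^n_v$ of the $n$ added balls, and bound the change at nodes of $P$ by a constant times $\E\abs{P}$. You then bound the change in the subtree of each $r\in R$ via \Cref{lem:change} by the expected number of added balls reaching $r$, which gives $\E\sum_{r\in R}(M^n_r+\abs{n''_r})\le B\,\E\abs{R}+\Ok{n^\beta}$, exactly as in the paper.

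The one-ball-at-a-time refinement in your last paragraph is not needed for this. Its claim that summing over $j$ reproduces the same $R$-bound is also imprecise, since there the relevant masses would be $M^{m+j}_v$, not $M^n_v$. This does not affect the main line of your argument, which is correct.
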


\begin{proof}
    Consider adding $n$ balls to an existing fractional split tree with $m$ balls. 
    Note that the number of additional balls that reach a fixed node $r$ is also made up by a binomial part $n'_r$ and a remainder term $n''_r$.
    Since $\phi_l$ is constant for $l>s$, the term $\norm\phi := \max_l \phi_l - \min_l \phi_l$ is finite. Define $P$ and $R$ as before in terms of the added balls, so e.g.\ $P := \mg{v\in [b]^* \mid M^n_v≥B}$.
    Because $\norm\phi$ is finite, the expected costs of nodes in $P$ changes at most by $\norm\phi\cdot\abs P$ when adding balls.
    By \Cref{lem:change} applied to the number of nodes $X_n(1)$, adding a ball to a subtree increases the number of nodes by at most a constant in expectation, so
    \begin{equation}
        \abs[\big]{\Ek{X_{m+n} - X_m}} \le \norm\phi\Ek[\big]{\abs P} + \Ok*{\E \sum_{r\in R} \kl{n_r' + \abs{n_r''}}}.
    \end{equation}
    By \Cref{lem:r}, both $\Ek{\abs P}$ and $\Ek{\abs R}$ are of order $\Ok*{n^\beta}$, and by \eqref{eq:nss},
    the sum of $\abs{n_r''}$ also has order $\Ok*{n^\beta}$. For the sum of $n_r'$, use the $\sigma$-algebra $\Gc$ defined before in \eqref{def:gc}. The set $R$ is $\Gc$-measurable and $n'_r$ is binomially $\Bin(n, M^1_r)$ distributed conditional on $\Gc$. Therefore,
    \begin{equation}\label{pf:ns-sum}
        \Ek*{\sum_{r \in R} n_r' \given \Gc} = \sum_{r \in R} \Ek*{n_r' \given \Gc} = \sum_{r \in R} M^n_r ≤ \abs{R} B.
    \end{equation}
    We take expectations and apply \Cref{lem:r} again to finish the proof.
\end{proof}

We will later use the argument in \eqref{pf:ns-sum} multiple times without explicit mention of $\Gc$.

\subsection{Proof of Theorem~\ref{thm:mean} for a bounded special case}\label{sec:constant-phi}

We continue to assume that $\phi_n$ is constant for $n>s$ and prove \Cref{thm:mean} for this case in this subsection.
By recursing until we hit a node in $R$, we can write $\E X_n$ as
\begin{equation}\label{r-recursion}
    \E X_n = \phi_{s+1}\E \abs{P} + \E \sum_{r\in R} X_r = \E \sum_{r\in R} X_r + \Ok*{n^\beta B^{-\beta}},
\end{equation}
where $X_r$ is the sum of costs of nodes that are descendants of $r$. Recall that $n_r$ denotes the number of balls in the subtree of $r$. Given $n_r$ for $r\in R$, the expectation of $X_r$ is by \Cref{lem:order} of order $\Ok*{n_r^\beta}$. This $n_r$ is typically close to $M^n_r$, and in the following lemma we show that the contribution of $r\in R$ where $n_r$ is far from $M^n_r$ is negligible.

\begin{lemma}\label{lem:wenige-extrem}
    Define $R' := \mg*{r\in R \mid \abs{n_r-M^n_r} < B^{0.6}}$, the nodes with a number of balls in the subtree close to their weight. This set
    contains most nodes in $R$, in the sense that
    \begin{equation}\label{eq:rs}
        \Ek*{\abs R-\abs{R'}} = \Ok*{n^\beta B^{-β-0.2}}.
    \end{equation}
    The contribution of subtrees of nodes in $R\setminus R'$ to $\Ek{X_n}$ is also negligible, in the sense that
    \begin{equation}\label{lem:wenige-extrem:eq}
        \Ek*{
            \sum_{r\in R\setminus R'} n_r^\beta
        } = \Ok*{n^\beta B^{-(0.2 \minv \beta)}}.
    \end{equation}
    Because of \Cref{lem:order} and \eqref{r-recursion}, this means that if $\phi_n$ is constant for $n>s$,
    \begin{equation}\label{rs-recursion}
        \Ek*{X_n} = \Ek*{\sum_{r\in R'} X_r} + \Ok*{n^\beta B^{-0.2 \minv \beta}}.
    \end{equation}
\end{lemma}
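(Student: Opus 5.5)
The plan is to condition on $\Gc$, under which $R$ is fixed and $n'_r\sim\Bin(n,M^1_r)$, and to use the decomposition $n_r = n'_r + n''_r$. If $r\notin R'$, then $\abs{n_r-M^n_r}\ge B^{0.6}$. Hence either $\abs{n'_r-M^n_r}\ge \frac12 B^{0.6}$ or $\abs{n''_r}\ge\frac12B^{0.6}$, and the two cases are handled separately.

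\emph{Binomial part.} For $r\in R$ we have $M^n_r<B$, so $\Vark{n'_r\given\Gc}\le M^n_r< B$. Chebyshev's inequality then gives $\Pk{\abs{n'_r-M^n_r}\ge\frac12B^{0.6}\given\Gc}\le 4B^{1-1.2}=4B^{-0.2}$. Summing over $r\in R$ and using $\Ek{\abs R}=\Ok{(n/B)^\beta}$ from \Cref{lem:r}, the expected number of such $r$ is $\Ok{n^\beta B^{-\beta-0.2}}$.

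\emph{Remainder part.} Markov's inequality gives $\eins(\abs{n''_r}\ge\frac12B^{0.6})\le 2B^{-0.6}\abs{n''_r}$. Combined with \eqref{eq:nss}, the expected number of such $r$ is $\Ok{n^\beta B^{-\beta-0.6}}$. Together the two parts give \eqref{eq:rs}.

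For \eqref{lem:wenige-extrem:eq}, I will bound $n_r^\beta\le \kl{M^n_r+\abs{n'_r-M^n_r}+\abs{n''_r}}^\beta$. Since $\beta\le 1$, the map $x\mapsto x^\beta$ is subadditive, so this is at most $B^\beta + \abs{n'_r-M^n_r}^\beta + \abs{n''_r}^\beta$. Each term is multiplied by the indicator $\eins(r\notin R')$ and handled as follows.
\begin{itemize}
\item The term $B^\beta\eins(r\notin R')$ contributes, by \eqref{eq:rs}, $\Ok{n^\beta B^{-0.2}}$.
\item For the term $\abs{n''_r}^\beta\le \abs{n''_r}+1$ (more precisely, $\abs{n''_r}^\beta\le\abs{n''_r}$ when $\abs{n''_r}\ge1$), the sum over $R$ is $\Ok{n^\beta B^{-\beta}}$ by \eqref{eq:nss}.
\item The term $\abs{n'_r-M^n_r}^\beta\eins(\cdot)$ is bounded via Cauchy--Schwarz or Hölder conditional on $\Gc$. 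For this I will use $\Ek{\abs{n'_r-M^n_r}^{2}\given\Gc}\le B$ together with the conditional probability bound above, obtaining $\Ek{\abs{n'_r-M^n_r}^\beta\eins\given \Gc}\le B^{\beta/2}\cdot(4B^{-0.2})^{1-\beta/2}\le 4B^{\beta-0.2}$. Here I use $\beta/2\le \beta$ and $B\ge1$, and I could equally have simply bounded $\abs{n'_r-M^n_r}^\beta\le$ the second moment to the power $\beta/2$. Multiplying by $\Ek{\abs R}$ gives $\Ok{n^\beta B^{-0.2}}$.
\end{itemize}
In the unfavourable subcase where the indicator is triggered by $n''_r$ instead, I will just use $\abs{n'_r-M^n_r}^\beta\le 1+\abs{n'_r-M^n_r}$ together with $\Pk{\cdot}$ bounded by $2B^{-0.6}\abs{n''_r}$, and independence-free conditioning; the crude bound $\Ek{\abs{n'_r-M^n_r}\given\Gc}\le B^{1/2}$ against the $B^{-0.6}$ factor still yields a negative power of $B$. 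Collecting these, all contributions are $\Ok{n^\beta B^{-(0.2\minv\beta)}}$, which is \eqref{lem:wenige-extrem:eq}.

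Finally, \eqref{rs-recursion} follows from \eqref{r-recursion} as follows. Conditional on $n_r$, the subtree of $r$ is a copy of $T_{n_r}$, so by \Cref{lem:order} (with $m=0$) we get $\Ek{X_r\given n_r}=\Ok{n_r^\beta}$. The terms with $r\in R\setminus R'$ are therefore bounded by \eqref{lem:wenige-extrem:eq}, and the $\Ok{n^\beta B^{-\beta}}$ error from \eqref{r-recursion} is absorbed.

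The main obstacle is the bookkeeping for the mixed term, where the deviation of $n'_r$ and the indicator are correlated and $n''_r$ is not independent of $n'_r$. I expect to handle it cleanly via Hölder's inequality under $\Gc$, using only second moments of $n'_r$ and first moments of $n''_r$.
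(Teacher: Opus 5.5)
Your proof of \eqref{eq:rs}, your handling of the $B^\beta$ and $\abs{n''_r}^\beta$ terms, and your derivation of \eqref{rs-recursion} agree with the paper. The same holds for the part of the binomial term where $\abs{n'_r-M^n_r}\ge B^{0.6}/2$: your Hölder bound and the paper's pointwise bound $\abs{x}^\beta\eins(\abs x\ge c)\le c^{\beta-2}x^2$ give the same $2^{2-\beta}B^{0.6\beta-0.2}$ per node.

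The gap is the cross case: $r\notin R'$ with $\abs{n'_r-M^n_r}<B^{0.6}/2\le\abs{n''_r}$. There you bound $\E[\abs{n'_r-M^n_r}\,\eins(\abs{n''_r}\ge B^{0.6}/2)\mid\Gc]$ by $B^{1/2}\cdot 2B^{-0.6}\abs{n''_r}$. This factors the expectation of a product as if $n''_r$ were $\Gc$-measurable or conditionally independent of $n'_r$. It is neither. Under the coupling both are functions of the same ball allocations; for instance, $n'_r=0$ forces $n''_r=n_r\ge0$. Moreover, \eqref{eq:nss} only controls first moments of $n''_r$. If you do this step legitimately, e.g.\ by conditional Cauchy--Schwarz and then Cauchy--Schwarz over $r\in R$, the replacement $\abs{n'_r-M^n_r}^\beta\le1+\abs{n'_r-M^n_r}$ is too lossy. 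It only yields order $B^{0.2}\cdot n^\beta B^{-\beta}$. That is not $\Ok{n^\beta B^{-\min(0.2,\beta)}}$ for $\beta<0.4$, and it does not even decay in $B$ for $\beta\le0.2$. Similarly, your aside that one could ``equally'' drop the indicator and use $\E[\abs{n'_r-M^n_r}^\beta\mid\Gc]\le B^{\beta/2}$ only gives $\Ok{n^\beta B^{-\beta/2}}$, again too weak for $\beta<0.4$.

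The missing observation is simple. In the cross case, $\abs{n'_r-M^n_r}^\beta<\abs{n''_r}^\beta\le\abs{n''_r}$, since $n''_r$ is a nonzero integer there. So this case is absorbed into $\E\sum_{r\in R}\abs{n''_r}=\Ok{n^\beta B^{-\beta}}$. This is exactly how the paper organizes the argument. It bounds $n_r^\beta\le B^\beta+2\max\{\abs{n'_r-M^n_r},\abs{n''_r}\}^\beta$ and notes that for $r\notin R'$ the larger deviation is at least $B^{0.6}/2$. Each deviation then only appears with its own indicator, and no mixed term arises.

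Your announced Hölder repair also works, provided you keep the power $\beta$ rather than passing to $1+\abs{n'_r-M^n_r}$. With $A_r:=\{\abs{n''_r}\ge B^{0.6}/2\}$, conditionally on $\Gc$ you get $\E[\abs{n'_r-M^n_r}^\beta\eins(A_r)\mid\Gc]\le B^{\beta/2}\bigl(2B^{-0.6}\E[\abs{n''_r}\mid\Gc]\bigr)^{1-\beta/2}$. Applying Hölder over $r\in R$ and over the outer expectation then gives $\Ok{n^\beta B^{-0.6-0.2\beta}}$. That route is correct but more work than the max trick.
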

\begin{proof}
    We start by proving \eqref{eq:rs}. As before, we can write $n_r = n'_r + n''_r$, where $n'_r \sim \Bin(n, M^1_r)$ and $n''_r$ accounts for the effects of $s_0$ and $s_1$. If $r\in R\setminus R'$, then $\abs{n_r-M^n_r}≥B^{0.6}$. Recall $n_r=n_r'+n_r''$ by \eqref{def:nss}, so either $\abs{n_r'-M^n_r} ≥ B^{0.6}/2$ or $\abs{n_r''} ≥ B^{0.6}/2$. Using Markov bounds,
    \begin{align}
        \Ek*{\abs R-\abs{R'}} &= \Ek*{\sum_{r\in R} \eins\mg*{\abs{n_r-M^n_r} > B^{0.6}}}
        \nonumber \\&≤ \Ek*{\sum_{r\in R} \eins\mg*{\abs{n'_r-M^n_r} > \frac{B^{0.6}}2} + \eins\mg*{\abs{n''_r} > \frac{B^{0.6}}2}}
        \nonumber \\&≤ \Ek*{\sum_{r\in R} 4\kl*{n_r'-M^n_r}^2B^{-1.2} + 2\abs{n''_r} B^{-0.6}}.
    \end{align}
    Now, we will use the fact that $n_r'$, conditional on $\Gc$ (see \eqref{def:gc}), is binomial $\Bin(n, M^1_r)$-distributed and that its conditional variance $\Ek*{(n_r'-M^n_r)^2\given \Gc}$ is bounded by $M^n_r ≤ B$. Thus, arguing as in \eqref{pf:ns-sum}
    \begin{align}
        \Ek*{\abs R-\abs{R'}} &≤ \Ek[\big]{\abs{R}}\, 4B^{-0.2} + 2B^{-0.6}\Ek*{\sum_{r\in R}\abs{n''_r}}
        \nonumber \\&= \Ok*{n^\beta B^{-\beta-0.2}} + \Ok*{n^\beta B^{-\beta-0.6}},
        \label{eq:rs-proof}
    \end{align}
    where the last equality follows from \Cref{lem:r} and \eqref{eq:nss}.

    To show \eqref{lem:wenige-extrem:eq}, we see that $n_r^β ≤ B^β + \abs{n'_r-M^n_r}^β + \abs{n''_r}^β$ for $r\in R$. If $\abs{n_r-M^n_r}$ is larger than $B^{0.6}$, then $\abs{n_r'-M^n_r}$ or $\abs{n_r''}$ must be larger than $B^{0.6}/2$. If $\abs{n_r'-M^n_r}$ is larger than $\abs{n_r''}$, we bound $n_r^β ≤ B^β + 2\abs{n_r'-M^n_r}^β$, else we bound $n_r^β ≤ B^β + 2\abs{n_r''}^β$, so
    \begin{multline}\label{lem:wenige-extrem:aufteilung}
        \E \; {
            \sum_{\mathclap{r\in R\setminus R'}} n_r^\beta
        }
        ≤ 
        B^\beta \E\abs*{R\setminus R'}
        + 2\Ek*{
            \sum_{r\in R} \abs*{n''_r}^\beta \eins\mg*{\abs{n_r''} ≥ \frac{B^{0.6}}2}
        }
        \\
        + 2\Ek*{
            \sum_{r\in R} \abs*{n'_r-M_r^n}^\beta \eins\mg*{\abs{n_r'-M^n_r} ≥ \frac{B^{0.6}}2}
        }
        .
    \end{multline}
    The first term $B^\beta\E\abs*{R\setminus R'}$ is $\Ok*{n^β B^{-0.2}}$ because of \eqref{eq:rs}.
    For the second term in \eqref{lem:wenige-extrem:aufteilung}, we do not need the indicator and $\beta$, because
    \begin{equation}
        \Ek*{
            \sum_{r\in R} \abs*{n''_r}^\beta \eins\mg*{\abs{n_r''} ≥ \frac{B^{0.6}}2}
        }
        ≤ \Ek*{
            \sum_{r\in R} \abs*{n''_r}
        }
        % ≤ 2s\E\abs{P}
        = \Ok*{n^βB^{-β}},
    \end{equation}
    again using \eqref{eq:nss}.
    For the third term in \eqref{lem:wenige-extrem:aufteilung},
    we use the Markov inequality to be able to again bound in terms of the conditional variance $\Vark{n'_r\given \Gc}≤ M^n_r≤B$:
    \begin{align}
        \MoveEqLeft \Ek*{
            \sum_{r\in R} \abs*{n'_r-M_r^n}^\beta \eins\mg*{\abs{n_r'-M^n_r} ≥ \frac{B^{0.6}}2}
        }
        \nonumber \\
        &≤ \Ek*{ 
            \sum_{r\in R} \frac1{\abs*{n'_r-M_r^n}^{2-\beta}} \eins\mg*{\abs{n_r'-M^n_r} ≥ \frac{B^{0.6}}2} \abs*{n'_r-M_r^n}^2
        }
        \nonumber \\
        &≤ \Ek*{\sum_{r\in R}
        2^{2-β}B^{0.6\beta-1.2} \abs{n_r'-M^n_r}^2
        }
        \nonumber \\
        &≤ \Ek*{\sum_{r\in R}
        2^{2-β}B^{0.6\beta-0.2}
        } = \Ok*{n^\beta B^{-0.2-0.4\beta}}
    \end{align}
    using that $2-β>0$ and \Cref{lem:r}.
    We bounded all terms in \eqref{lem:wenige-extrem:aufteilung} by either $\Ok*{n^\beta B^{-0.2}}$ or $\Ok*{n^\beta B^{-\beta}}$, finishing the proof.
\end{proof}

\begin{lemma}\label{lem:rzeta}
    Choose $\eps>0$ such that $\eps\inv$ is an integer, let $B = \eps^{-30}$
    and define the set $R_{\zeta B}$ for $\zeta\in S := \mg{\eps^2,2\eps^2, \dots, 1}$ as
    \begin{equation}\label{def:rzeta}
        R_{\zeta B} := \mg[\big]{r\in R\mid M^n_r \in [\zeta B-\eps^2 B, \zeta B)}. % chktex 9
    \end{equation}
    Then, there exists for every $\zeta\in S$ a $d$-periodic (constant in the nonlattice case) function $\varpi_\zeta ≥ 0$, such that
    \begin{equation}
        \abs*{R_{\zeta B}} = \kl*{\varpi_\zeta\kl*{\ln\frac nB} + o_\eps(1)} \fracc nB^\beta \quad \text{for } \frac nB \to \infty.
    \end{equation}
    We use the notation $o_\eps(\cdot)$ instead of $o(\cdot)$ because the convergence does not hold uniformly in $\eps$.
    The sum of the functions fulfills 
    \begin{equation} \label{eq:rzeta-upper-bound}
        \sum_{\zeta\in S} \varpi_\zeta(q) ≤ \frac{be^{bd}}{\beta\mu}
    \end{equation} for all $q\in \RR$.
    Furthermore, \begin{equation} \label{eq:rzeta-lower-bound}
        \inf_q \sum_{\substack{\zeta\in S\\\zeta≥e^{-d}}}\varpi_\zeta(q) > 0.
    \end{equation}
\end{lemma}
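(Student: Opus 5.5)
The plan is to treat $\E\abs{R_{\zeta B}}$ (which is what $\abs{R_{\zeta B}}$ in the statement should be read as) exactly as $\E\abs P$ was treated in \Cref{lem:r}: write it as a renewal equation and apply the key renewal theorem \Cref{thm:krt}. Put $t:=\ln\frac nB$ and measure masses in units of $B$, so that the root has mass $e^t$. Define $W_\zeta(t)$ as the expected number of nodes $r\ne\eps$ with $e^tM^1_r\in[\zeta-\eps^2,\zeta)$ whose parent $u$ satisfies $e^tM^1_u\ge 1$; then $W_\zeta(\ln\frac nB)=\E\abs{R_{\zeta B}}$. Conditioning on the split vector of the root, which contributes its own children when $t\ge0$ and otherwise recurses into the $b$ subtrees, gives
\begin{equation*}
  W_\zeta(t)=g_\zeta(t)+b\,\E\bigl[W_\zeta(t+\ln V)\bigr],\qquad
  g_\zeta(t):=\eins(t\ge0)\,\E\sum_{i=1}^b\eins\bigl(e^tV_i\in[\zeta-\eps^2,\zeta)\bigr).
\end{equation*}
As in \Cref{lem:r}, multiplying by $e^{-\beta t}$ turns this into a proper renewal equation driven by $\P_{-\ln\widehat V}$, with inhomogeneity $\widehat g_\zeta(t)=e^{-\beta t}g_\zeta(t)$.

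The first technical point is that $\widehat g_\zeta$ is directly Riemann integrable. I would argue this as follows. The function $g_\zeta$ is a difference of the two bounded functions $t\mapsto\E\sum_i\eins(V_i\ge(\zeta-\eps^2)e^{-t})$ and $t\mapsto\E\sum_i\eins(V_i\ge\zeta e^{-t})$, each monotone in $t$, so $g_\zeta$ is locally Riemann integrable. Moreover $0\le\widehat g_\zeta(t)\le b\,e^{-\beta t}\eins(t\ge0)$, which gives a summable majorant over unit intervals. The key renewal theorem then yields the two cases:
\begin{itemize}
\item in the nonlattice case, $W_\zeta(t)=e^{\beta t}\bigl(\frac1\mu\int_0^\infty\widehat g_\zeta+o(1)\bigr)$;
\item in the lattice case, $W_\zeta(t)=e^{\beta t}\bigl(\varpi_\zeta(t)+o(1)\bigr)$ with the $d$-periodic function $\varpi_\zeta(t):=\frac d\mu\sum_{k\in\ZZ}\widehat g_\zeta(t-kd)$.
\end{itemize}
Inserting $t=\ln\frac nB$ gives the claimed asymptotics. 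The error is $o_\eps(1)$ because $g_\zeta$, and hence the rate of convergence, depends on $\eps$.

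For \eqref{eq:rzeta-upper-bound}, note that the intervals $[\zeta-\eps^2,\zeta)$, $\zeta\in S$, partition $[0,1)$. Hence $\sum_{\zeta\in S}g_\zeta(t)\le b\,\eins(t\ge0)$, and so $\sum_\zeta\varpi_\zeta$ is at most $b$ times the limit function of $U(t)e^{-\beta t}$ from \Cref{lem:r}. In the lattice case this is $\frac{bd}{\mu(1-e^{-\beta d})}e^{\beta(\floord t-t)}\le\frac{b e^{\beta d}}{\beta\mu}$, using $1-e^{-x}\ge xe^{-x}$; this is at most $\frac{be^{bd}}{\beta\mu}$. In the nonlattice case the bound is $\frac b{\beta\mu}$ directly.

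The main obstacle is the uniform lower bound \eqref{eq:rzeta-lower-bound}, and I would split it by case.
\begin{itemize}
\item \emph{Lattice case.} Let $\zeta_0$ be the smallest grid point with $\zeta_0\ge e^{-d}$. Then $\zeta_0-\eps^2<e^{-d}$, so the union of the intervals with $\zeta\ge e^{-d}$ contains $[e^{-d},1)$. Because $\P(V_{(1)}=1)<1$, there are an $i$ and a $j\ge1$ with $p:=\P(V_i=e^{-jd})>0$. For any $t$, pick the unique $k\in\ZZ$ with $t-kd\in[(j-1)d,jd)$. Then $t-kd\ge0$ and the child mass $e^{t-kd}e^{-jd}$ lies in $[e^{-d},1)$. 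Therefore $\sum_{\zeta\ge e^{-d}}\varpi_\zeta(t)\ge\frac d\mu e^{-\beta jd}p>0$, uniformly in $t$.
\item \emph{Nonlattice case.} Here $d=0$, so only $\zeta=1$ remains and the quantity is the constant $\frac1\mu\int_0^\infty e^{-\beta t}\,\E\sum_i\eins\bigl(e^tV_i\in[1-\eps^2,1)\bigr)\,\mathrm dt$. For each $V_i\in(0,1)$ the set of admissible $t$ is an interval of length $-\ln(1-\eps^2)>0$ inside $[0,\infty)$. Since $\P(V_{(b)}>0)>0$ and $V_i<1$ with positive probability, this integral is strictly positive for each fixed $\eps$, which is all that is claimed.
\end{itemize}
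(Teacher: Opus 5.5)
Your proposal is correct and is essentially the paper's proof. The paper also uses the renewal equation for $\E\abs{R_{\zeta B}}$ with inhomogeneity $b\,\P(e^tV\in[\zeta-\eps^2,\zeta))\eins(t\ge0)$, the key renewal theorem after tilting by $V^\beta$, the partition of $[0,1)$ for the upper bound (which, as in the paper, really gives $be^{\beta d}/(\beta\mu)$), and the lattice structure of $\ln V$ for the lower bound; the only difference is that you keep a single atom $e^{-jd}$ where the paper sums over all atoms.

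Two small justifications should be tightened. First, an atom of some $V_i$ in $(0,1)$ follows from $\P(V_{(b)}>0)>0$ together with $b\ge2$, not from $\P(V_{(1)}=1)<1$ alone, since for fractional trees that condition still permits all $V_i=0$ on $\{V_{(1)}<1\}$. Second, in the nonlattice case the admissible set of $t$ is $[\max(0,\ln(1-\eps^2)-\ln V_i),-\ln V_i)$, which has positive length but may be shorter than $-\ln(1-\eps^2)$; the conclusion is unaffected.
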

\begin{proof}
    This proof will be a slight refinement of the proof of \Cref{lem:r}.
    For a uniformly random node $v\in[b]^k$ at depth $k$, the mass $M^1_v$ is the product of $k$ independent copies of the random variable $V$, a uniformly random element of $\V$. Calling this product $Y_k$, a uniformly chosen child $u$ of this node has mass $Y_k V$, with $V$ independent of $Y_k$. The child $u$
    is in $R_{\zeta B}$ if and only if $nY_k ≥ B$ (its parent is in $P$) and $nY_k V \in [\zeta B-\eps^2 B, \zeta B)$.
    Applying $-\ln(\cdot)$ and writing $q := \ln\frac nB$, this is equivalent to $-\ln Y_k -q ≤ 0$ and $-\ln Y_k-\ln V -q \in {(-\ln \zeta, -\ln(\zeta-\eps^2)]}$. In the case of $\zeta = \eps^2$, we set $\ln(\zeta-\eps^2) := -\infty$.
    Note that this only depends on $n$ and $B$ via $q = \ln \frac nB$, so we can define a function $Z(q)$ such that $Z\kl*{\ln\frac nB}=\E\abs[\big]{R_{\zeta B}}$ as follows:% chktex 9
    \begin{equation}\label{def:z}
        Z(q) := \sum_{k=0}^\infty b^{k+1} \Pk*{-\ln Y_k-q ≤ 0,\,-\ln Y_k -q - \ln V \in \left(-\ln \zeta, -\ln\kl*{\zeta - \eps^2}\right]}. % chktex 9
    \end{equation}
    Using the conditional expectation on the first independent copy of $V$ in $Y_k$, so that $-\ln Y_{k+1}-q$ has the same distribution as $-\ln Y_k - (q + \ln V)$, we can write the sum \eqref{def:z} as a recursion
    \begin{equation}
        Z(q) = b\Pk*{-q - \ln V \in \left(-\ln \zeta, -\ln\kl*{\zeta - \eps^2}\right]} % chktex 9
        \eins\kl*{q≥0}
         + b\Ek*{Z(q+\ln V)}.
    \end{equation}
    Defining
    \begin{equation}\label{def:g-zeta}
        G_\zeta(q) := b\Pk*{-q - \ln V \in \left(-\ln \zeta, -\ln\kl*{\zeta - \eps^2}\right]}\eins\kl*{q≥0}, % chktex 9
    \end{equation}
    we see that $Z(q)$ fulfills the recursion equation $Z = G + \kl*{b\P_{-\ln V} * Z}$.
    We will use the same size-biasing as in the proof of \Cref{lem:r}.
    Since $G_\zeta(q)$ is bounded and continuous almost everywhere, $G_\zeta(q)e^{-\beta q}$ is directly Riemann integrable, so the key renewal theorem
    \Cref{thm:krt} shows that in the nonlattice case, for $q=\ln\frac nB \to \infty$,
    \begin{equation}
        Z(q) \sim \frac {e^{\beta q}}\mu\int_{0}^\infty G_\zeta(u)e^{-\beta u}\mathrm du =: \fracc nB^\beta \varpi_\zeta \quad\text{ as }q\to\infty
    \end{equation}
    with $\varpi_\zeta$ constant.
    Note that $\varpi_1>0$ since $G_1(u) = b\Pk*{-\ln V\in (u,u-\ln(1-\eps^2)]} > 0$ % chktex 9
    for at least a small interval, showing \eqref{eq:rzeta-lower-bound}.

    In the lattice case, define the $d$-periodic function
    \begin{equation}\label{def:varpi-zeta}
        \varpi_\zeta\kl*{q} := \frac d\mu\sum_{u=0}^\infty G_\zeta(ud + \restd q)e^{-\beta(ud+\restd q)},
    \end{equation}
    then \Cref{thm:krt} implies that $\E\abs[\big]{R_{\zeta B}} = Z(q) = \fracc nB^{β}\kl*{\varpi_\zeta\fracc nB + o(1)}$ as $q = \ln\frac nB\to\infty$.
    On the other hand, summing $\varpi_\zeta$ up over all $\zeta\in S$ yields for $q≥0$ (i.e.\ $n≥B$)
    \begin{align}
       \sum_{\zeta\in S} \varpi_\zeta(q) &=   \frac1\mu\sum_{\zeta\in S}\sum_{u=0}^\infty G_\zeta(ud+\restd q)de^{-\beta (ud+\restd q)}
        \nonumber \\&= \frac d\mu \sum_{u=0}^\infty e^{-\beta ud-\beta\restd q}b\sum_{\zeta\in S}\Pk*{-\ln V -ud-\restd q\in \left(-\ln \zeta, -\ln\kl*{\zeta - \eps^2}\right]} % chktex 9
        \nonumber \\&≤ \frac {db}\mu \sum_{u=0}^\infty e^{-\beta ud}
        = \frac{db}{\mu(1-e^{-\beta d})}
        ≤ \frac{be^{βd}}{\beta\mu},
    \end{align}
    and an analogous calculation can be done in the nonlattice case as well, with an integral over $e^{-\beta u}$ in place of the sum, proving \eqref{eq:rzeta-upper-bound}.

    Back to the lattice case, summing instead over $\zeta\in S$ with $\zeta≥e^{-d}$, by \eqref{def:varpi-zeta} and \eqref{def:g-zeta},
    \begin{align}
        \sum_{\mathclap{\substack{\zeta\in S\\\zeta≥e^{-d}}}} \varpi_\zeta(q)
        &= \frac d\mu \sum_{u=0}^\infty e^{-\beta(ud+\restd q)}b\sum_{\mathclap{\substack{\zeta\in S\\\zeta≥e^{-d}}}}\Pk*{-\ln V-ud-\restd q \in \left(-\ln \zeta, -\ln\kl*{\zeta - \eps^2}\right]} % chktex 9
        \nonumber \\& ≥ \frac{db}\mu  \sum_{u=0}^\infty e^{-\beta (u+1)d} \Pk[\big]{-\ln V \in (ud+\restd q,(u+1)d+\restd q]} % chktex 9
        \nonumber \\& = \frac{db}{\mu} \sum_{u=0}^\infty e^{-\beta (u+1)d} \Pk[\big]{-\ln V = (u+1)d}
        \nonumber \\& = \frac{db}{\mu} \Ek*{V^β \eins(V \notin \mg{0,1})} > 0.
    \end{align}
    The inequality holds because the intervals $(-\ln\zeta, -\ln(\zeta-\eps^2)]$ fill out at least the interval $(0,d]$. % chktex 9
    The third line is because of $-\ln V \in d\ZZ$. In the last line, $V$ cannot be $0$ since then $-\ln V$ would be infinite, and it cannot be $1$ because the summation starts at $u=0$.
\end{proof}

We can now prove \Cref{thm:mean} in the case that $\phi_n$ is constant for $n>s$.

\begin{proposition}\label{lem:constant-phi}
    If $\phi_n$ is constant for $n>s$, there exists a bounded, continuous, $d$-periodic function $\varpi$, such that
 \begin{equation}\label{res:constant-phi}
     \Ek{X_n} = \varpi(\ln n) n^\beta + o(n^β).
 \end{equation}
 In the nonlattice case, $\varpi$ is a constant.
\end{proposition}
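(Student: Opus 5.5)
The plan is to combine \eqref{rs-recursion} with \Cref{lem:rzeta}. We approximate $\Ek{X_n}$ by a sum over mass classes $R_{\zeta B}$, replace the cost of each subtree by a value depending only on $\zeta B$, and then let $\eps\to0$ through a Cauchy-type argument.

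Fix $\eps$ with $\eps^{-1}\in\NN$ and set $B=\eps^{-30}$. By \eqref{rs-recursion}, $\Ek{X_n}=\Ek{\sum_{r\in R'}X_r}+\Ok{n^\beta B^{-0.2\minv\beta}}$. For $r\in R'$ we have $\abs{n_r-M^n_r}<B^{0.6}$. Conditional on $\Gc$ and on $n_r$, the subtree at $r$ is a copy of $T_{n_r}$, so $\Ek{X_r\mid n_r}=\Ek{X_{n_r}}$. If moreover $r\in R_{\zeta B}$, then $n_r$ lies within $\eps^2B+B^{0.6}\le 2\eps^2 B$ of $\zeta B$. \Cref{lem:order} then gives
\[
\abs[\big]{\Ek{X_{n_r}}-\Ek{X_{\lfloor\zeta B\rfloor}}}=\Ok*{(\eps^2B)^\beta}.
\]
Summing over $r$ and using $\E\abs R=\Ok{(n/B)^\beta}$, the error is $\Ok{\eps^{2\beta}n^\beta}$. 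The terms with $r\in R\setminus R'$ are reinserted at cost $\Ok{n^\beta B^{-\beta-0.2}}\cdot\Ok{B^\beta}$ by \eqref{eq:rs}. Hence
\[
\Ek{X_n}=\sum_{\zeta\in S}\Ek{X_{\lfloor\zeta B\rfloor}}\,\E\abs{R_{\zeta B}}+\Ok*{\eps^{c}n^\beta}
\]
for some $c>0$. Here $\Ek{X_{\lfloor\zeta B\rfloor}}=\Ok{B^\beta}$ by \Cref{lem:order}.

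Next we insert \Cref{lem:rzeta}: $\E\abs{R_{\zeta B}}=(\varpi_\zeta(\ln n-\ln B)+o_\eps(1))(n/B)^\beta$. Because $S$ is finite for fixed $\eps$, this yields
\[
\Ek{X_n}n^{-\beta}=F_\eps(\ln n)+o_\eps(1)+\Ok{\eps^c},\qquad F_\eps(x):=B^{-\beta}\sum_{\zeta\in S}\Ek{X_{\lfloor\zeta B\rfloor}}\varpi_\zeta(x-\ln B).
\]
Each $F_\eps$ is $d$-periodic (constant in the nonlattice case). It is bounded uniformly in $\eps$ by \eqref{eq:rzeta-upper-bound} together with $\Ek{X_m}=\Ok{m^\beta}$.

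\textbf{Passing to the limit.} For two values $\eps,\eps'$ and any $x$, choose $n\to\infty$ along $\ln n\in x+d\ZZ$; in the nonlattice case take any $n$. The display above then gives $\abs{F_\eps(x)-F_{\eps'}(x)}\le C(\eps^c+\eps'^c)$. So $F_\eps$ converges uniformly to some $\varpi$ as $\eps\to0$. Then
\[
\limsup_n\,\sup\abs{\Ek{X_n}n^{-\beta}-\varpi(\ln n)}\le C\eps^c
\]
for every $\eps$, which proves \eqref{res:constant-phi}. Boundedness and periodicity pass to the limit, and constancy holds in the nonlattice case.

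\textbf{Main obstacle: continuity.} $\varpi_\zeta$ from \eqref{def:varpi-zeta} is typically discontinuous in $q$, so $F_\eps$ need not be continuous. My first approach is a perturbation argument in the limit. By \Cref{lem:order}, $\abs{\Ek{X_{n'}}-\Ek{X_n}}=\Ok{\abs{n'-n}^\beta}$. With $n'=\lfloor e^hn\rfloor$ this gives
\[
\abs{\varpi(\ln n+h)e^{h\beta}-\varpi(\ln n)}\le C(e^h-1)^\beta+o(1).
\]
Taking $n\to\infty$ along appropriate subsequences (which are dense modulo $d$ in the lattice case) yields $\abs{\varpi(x+h)-\varpi(x)}=\Ok{h^\beta}$ uniformly in $x$. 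Therefore $\varpi$ is Hölder continuous, and in particular continuous.
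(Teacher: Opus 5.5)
Up to the passage to the limit your argument coincides with the paper's: the $R'$/$R_{\zeta B}$ approximation, inserting Lemma~\ref{lem:rzeta}, and the Cauchy argument in the nonlattice case. In the lattice case, however, there are two genuine gaps.

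\textbf{First gap: the uniform limit.} The step ``choose $n\to\infty$ along $\ln n\in x+d\mathbb{Z}$'' is not available. The set $\{\ln n \bmod d\}$ is countable, so for all but countably many $x$ no such $n$ exists. If moreover no power $e^{dk}$ with $k\neq 0$ is rational, every residue class contains at most one $\ln n$; this happens, for example, for the split vector $(\gamma^{-1},\gamma^{-2})$ with $\gamma$ the golden ratio. What your estimate really gives is $|F_\varepsilon-F_{\varepsilon'}|\le C(\varepsilon^c+\varepsilon'^c)$ on the dense set $\{\ln n \bmod d:\ n\ge N\}$. Since $F_\varepsilon$ is discontinuous, density alone does not yield uniform closeness. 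The missing ingredient, which the paper uses, is the structure of $F_\varepsilon$. Because $-\ln V\in d\mathbb{Z}$, each $G_\zeta$, and hence $F_\varepsilon$, is a right-continuous step-type function with finitely many jumps per period, located at $\ln B+\ln(k\varepsilon^2)+d\mathbb{Z}$. This lets you approach every $x$ from the right within the dense set.

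\textbf{Second gap: continuity.} Your continuity argument is circular. The relation $\mathbb{E}X_n=\varpi(\ln n)n^\beta+o(n^\beta)$ only involves $\varpi$ at the countably many points $\ln n$, so your increment bound from Lemma~\ref{lem:order} holds only between such points. Passing to ``subsequences'' to reach arbitrary $x$ and $x+h$ presupposes exactly the continuity you want to prove. Continuity has to come from the construction of $\varpi$ as a uniform limit.

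The paper does this number-theoretically. If $\varpi$ were discontinuous at $t$, then $\varpi_B$ would jump at $t$ for all large $\varepsilon^{-1}\in\mathbb{N}$. That forces $e^{t-dj}=k\varepsilon^{-28}$ for some $j$, so every prime would divide some integer $e^{t-dj}$. This is impossible, because any such integer only has prime factors of two fixed ones, $e^{t-dj_1}$ and $e^{t-dj_2}$.

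Your idea can be repaired by applying it to $F_\varepsilon$ rather than to $\varpi$. Take $n$ with $\ln n$ and $\ln(n+1)$ straddling a jump point of $F_\varepsilon$ modulo $d$. Then $|a_{n+1}-a_n|\to 0$ (from Lemma~\ref{lem:order}), together with $|a_n-F_\varepsilon(\ln n)|\le C\varepsilon^c+o_\varepsilon(1)$, shows that every jump of $F_\varepsilon$ is $O(\varepsilon^c)$. Hence the uniform limit is continuous, after which your H\"older bound follows legitimately. This repair, however, again needs the one-sided continuity and finite-jump structure of $F_\varepsilon$, which your proposal never establishes.
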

\begin{proof}
    We use the same notation as in \Cref{lem:rzeta}, in particular $B = \eps^{-30}$ and $R_{ζB}$. Note that O-notation in this proof is to be understood as $n\to\infty$ uniformly in $\eps$ and any fixed nodes.
    % In view of \Cref{lem:order}, we only need to show that $\Ek{X_n}n^{-\beta}$ is Cauchy sequence in the nonlattice case.
    Remember $R':= \mg*{r\in R \mid \abs{n_r-M^n_r} < B^{0.6}}$ from \Cref{lem:wenige-extrem}, which shows that $R'$ contains most of the nodes and balls in $R$. For $\zeta\in S$, a node $r$ in $R'\cap R_{\zeta B}$ has $\abs{n_r-M^n_r} < B^{0.6}$ and $\abs{M^n_r-\zeta B} ≤ \eps^2 B$, so $\abs{n_r - \zeta B} < B^{0.6} + \eps^2 B$ and $\Ek*{X_r\given r\in R'\cap R_{\zeta B}} = \Ek*{X_{\zeta B}} + \Ok*{B^\beta\eps^{2\beta} + B^{0.6\beta}}$ by \Cref{lem:order}. Because $\eps\inv$ is an integer, the term $\zeta B$ is an integer, too. Hence, using \eqref{rs-recursion} and \Cref{lem:r},
    \begin{align}
        \Ek*{X_n}
         &= \Ek*{\sum_{r\in R'} X_r} + \Ok*{n^\beta B^{-0.2\minv \beta}}
        \nonumber \\&= \Ek*{\sum_{\zeta\in S}\sum_{r\in R' \cap R_{\zeta B}} X_r} + \Ok*{n^\beta B^{-0.2\minv \beta}}
        \nonumber \\&= \Ek*{\sum_{\zeta\in S}\sum_{r\in R' \cap R_{\zeta B}} \Ek{X_{\zeta B}}} + \Ok*{\E\abs{R'}\kl*{\eps^{2\beta} B^{\beta} + B^{0.6\beta}}} + \Ok*{n^\beta B^{-0.2\minv \beta}}
        \nonumber \\&= \sum_{\zeta\in S}\E \, \abs*{R' \cap R_{\zeta B}}\,\Ek{X_{\zeta B}}
        + \Ok*{n^\beta\kl*{
        \eps^{2\beta}
        + B^{-0.4\beta}
        +B^{-0.2\minv \beta}}
        }
    \end{align}
    Remember that $B=\eps^{-30}$, so $B^{-0.2} = \eps^6$ and $B^{-0.4\beta} = \eps^{12\beta}$, so we can summarize the error terms as $\Ok*{\eps^{2\beta}n^\beta}$. By \eqref{eq:rs}, $\Ek*{\abs{R}-\abs{R'}} = \Ok*{n^βB^{-β-0.2}}$ and $\Ek{X_{\zeta B}} = \Ok*{B^\beta}$ by \Cref{lem:order}, so we can remove the intersection with $R'$:
    \begin{equation}
        \Ek*{X_n}
         = \sum_{\zeta\in S}\E \, \abs*{R_{\zeta B}}\,\Ek{X_{\zeta B}} + \Ok*{n^\beta B^{-0.2}} + \Ok*{\eps^{2\beta}n^\beta}.
    \end{equation}
    By \Cref{lem:rzeta}, $\abs*{R_{\zeta B}} = \kl{\varpi_\zeta\kl*{\ln\frac nB} + o_\eps(1)}\fracc nB^\beta$; remember that $o_\eps(1)$ stands for a term that converges to $0$ as $n\to\infty$ with a convergence speed depending on $\eps$; in contrast to $o(1)$, which is uniform in $\eps$. Hence,
    \begin{equation}
        \Ek*{X_n}
         = \sum_{\zeta\in S}\kl*{\varpi_\zeta\kl*{\ln\frac nB} + o_\eps(1)}\,\Ek{X_{\zeta B}}\fracc nB^\beta  + \Ok*{\eps^{2\beta}n^\beta}.
    \end{equation}
    For fixed $\eps$, this sum goes over finitely many terms, and $\Ek{X_{\zeta B}} = \Ok*{B^\beta}$ by \Cref{lem:order},
    \begin{equation}\label{nn-closeness}
        \Ek*{X_n}
         = \sum_{\zeta\in S}\varpi_\zeta\kl*{\ln\frac nB}\,\Ek{X_{\zeta B}}\fracc nB^\beta  + \Ok*{\eps^{2\beta}n^\beta} + o_\eps\kl*{n^\beta}.
    \end{equation}

    In the nonlattice case, the functions $\varpi_\zeta$ are constant. Thus, for $n'\in \NN$ with $n'>n$,
    \begin{equation}\label{pf:nonlattice-cauchy}
        \abs*{\Ek*{X_n}n^{-\beta} - \Ek*{X_{n'}}\kl{n'}^{-\beta}} = \Ok*{\eps^{2\beta}} + o_\eps\kl*{1}.
    \end{equation}
    For $\eps$ sufficiently small and $n$ large the right-hand side of \eqref{pf:nonlattice-cauchy} is small, so $\Ek{X_n}n^{-\beta}$ is a Cauchy sequence converging to some limit $\varpi$, showing the result \eqref{res:constant-phi}.

    In the lattice case, define the $d$-periodic function
    \begin{equation}
        \varpi_B(t) := \sum_{\zeta\in S}\varpi_\zeta\kl*{t-\ln B} \Ek{X_{\zeta B}}B^{-\beta},
    \end{equation}
    so that by \eqref{nn-closeness}
    \begin{equation}\label{varpi-b-closeness}
        \Ek*{X_n} = \varpi_B(\ln n)n^\beta + \Ok*{\eps^{2\beta}n^\beta} + o_\eps\kl*{n^\beta}.
    \end{equation}
    We will show that the functions $\varpi_B$ for $B\to\infty$ form a Cauchy sequence in the uniform metric. 
    Then define the limit as $\varpi := \lim_{B\to\infty}\varpi_B$ and \eqref{varpi-b-closeness} would conclude the proof of \eqref{res:constant-phi}.
    
    By the definition of $\varpi_\zeta$ in \eqref{def:varpi-zeta},
    \begin{equation}\label{def:varpi-b}
        \varpi_B(t) = \frac d\mu \sum_{\zeta\in S}\sum_{u=0}^\infty G_\zeta(ud + \restd{t-\ln B})e^{-\beta(ud+\restd{t-\ln B})}
        \Ek{X_{\zeta B}}B^{-\beta}.
    \end{equation}
    Recall the fact that $\ln V\in d\ZZ$ and the definition of $G_\zeta(t)$ in \eqref{def:g-zeta} as
    \begin{equation}\label{def:g-zeta:rep}
        G_\zeta(t) = b\Pk*{-t - \ln V \in \left(-\ln \zeta, -\ln\kl*{\zeta - \eps^2}\right]} % chktex 9
        \eins\kl*{t≥0},
    \end{equation}
    from which we see that $G_\zeta$ is a step function with $d$-periodic jumps at $\ln \zeta+d\ZZ$ and at $\ln(\zeta-\eps^2)+d\ZZ$ if $\zeta\ne\eps²$.
    Therefore, $\varpi_B(t)e^{\beta\restd{t}}$ is also a step function with jumps at $\ln(\zeta) + \ln B + d\ZZ$ for every $\zeta\in S$ and at $d\ZZ$.
    For $\eps'<\eps$ and $B':=(\eps')^{-30}$ it therefore suffices to check that $\varpi_B(t)$ and $\varpi_{B'}(t)$ are close on the dense set $\mg*{\restd{\ln n}\mid n≥N}$ for some $N$ sufficiently large. This follows from \eqref{varpi-b-closeness} because
    \begin{equation}
        \varpi_B(\ln n) = \Ek{X_n}n^{-\beta} + \Ok{\eps^{2\beta}} + o_\eps(1) = \varpi_{B'}(\ln n) + \Ok{\eps^{2\beta}} + o_\eps(1).
    \end{equation}
    To show that $\varpi$ is continuous, note that if $\varpi$ were discontinuous at some fixed point $t\in \RR$, then $\varpi_B$ for all large $B$ must also have a jump at $t$. Fix $\eps\inv\in\NN$ and $B=\eps^{-30}$. 
    %The functions $G_\zeta$ jump at $\ln \zeta +d\ZZ$ for $\zeta\in \mg{\eps^2,2\eps^2,\dots, 1}$ by \eqref{def:g-zeta:rep}, and the function $\varpi_B$ moves these jumps by $\ln B$, see \eqref{def:varpi-b}. Therefore, 
    As noted before, $\varpi_B$ jumps only at $\ln B + \ln(k\eps^2) + dj$ for $j\in\ZZ$ and $1≤k≤\eps^{-2}$.
    
    So if $t$ is a jump of $\varpi_B$, there must exist $k≤\eps^{-2}$ and $j\in \ZZ$ such that $t=\ln B + \ln(k\eps^2) + dj$.
    Therefore, $e^{t-dj} = kB\eps^2 = k\eps^{-28}$ is a natural number and a multiple of $\eps\inv$. %Hence, if a prime $p$ divides $\eps\inv$ (write $p\mid\eps\inv$) and $\varpi_B$ is discontinuous at $t$, then $p \mid e^{t-dj}$ for some $j$.
    If $\varpi$ were discontinuous at $t$, this would hold for all large $\eps\inv \in\NN$, so in particular for every prime $p$ there exist a $j\in\ZZ$ such that $p$ divides $e^{t-dj}$, which we write $p\mid e^{t-dj}$.%, or else we could choose $\eps\inv$ as multiples of $p$.

    These primes cannot all divide $e^{t-dj}$ for the same $j$, so assume that for $j_1\ne j_2$ both $e^{t-dj_1}$ and $e^{t-dj_2}$ are natural numbers.
    Any other $j\in\ZZ$ can be written as $j = łj_1+(1-ł)j_2$ with $ł\in\QQ$, so that $e^{t-dj} = \kl*{e^{t-dj_1}}^ł\kl*{e^{t-dj_2}}^{1-ł}$. This implies that $e ^{t-dj}$, if integer, can only have prime factors of $e^{t-dj_1}$ and $e^{t-dj_2}$. Consequently, there are only finitely many primes $p$ where $p\mid e^{t-dj}$ for some $j\in \NN$, contradicting the previous paragraph and showing that $\varpi$ is continuous.
    %and thus $\varpi_B$ is continuous at $t$ for all $\eps\inv$ with $p \mid \eps\inv$, implying that $\varpi$ is also continuous at $t$.
    % It seems easy to show that no $t$ is equal to $\restd{\ln(i\eps^2 B)} = \restd{\ln i-28\ln \eps}$ for some $i≤\eps^{-2}$ for all large integer $\eps\inv$, but we instead show that the jumps in $\varpi_B$ are of order $\eps^{2\beta}$. By careful analysis of \eqref{def:varpi-b}, we see that the jumps result from atoms of $-\ln V$ moving
\end{proof}
Note that we still did not show that $\varpi$ is bounded away from zero
when $\phi_n\ge0$ with some $\phi_n>0$,
which is equivalent to showing $\Ek{X_n} = \Omega\kl*{n^β}$. This will be done in \Cref{lem:lower-bound}.

\subsection{Extending to other toll functions}\label{sec:other-toll}

We already know by \Cref{lem:constant-phi} that \Cref{thm:mean} (except for the lower bound in \Cref{lem:lower-bound}) holds if $\phi_n$ is constant for $n>s$. We can first extend this to $\phi_n$ being constant for $n>B$ for some $B$. This is true because we can define a new `bucket' split tree $\widetilde T$ with the same $\V$, $s_0$ and $s_1$ as the original tree, but instead set $s$ to $B$ and define the toll function $\widetilde \phi$ by $\widetilde \phi(T_n) := \phi_{n} = \phi_{B+1}$ for $n>B$ and $\widetilde \phi(T_n) := \Ek*{X_n}$ for $n≤B$. Define $\widetilde X_n$ as the sum of the toll functions on the bucket split tree. Because the split tree can be generated by growing small split trees out of the buckets of the bucket split tree, $\Ek*{X_n} = \Ek{\widetilde X_n}$ for all $n$, and we can use \Cref{lem:constant-phi} to show \Cref{thm:mean} for this $\phi$. This is perhaps a bit clearer when looking at the recursion equation for $X_n$, which can be found later at \eqref{eq:xn-recursion}, where increasing $s$ corresponds to specifying more of the first values.

Furthermore, $X_n(\phi)$ is obviously linear in $\phi$. In the next step, we will show that the contribution from $\phi_n$ for $n$ large is negligible if $\phi_n = \Ok*{n^{\beta-\delta}}$ for $\beta≥\delta>0$.

\begin{lemma}\label{lem:tail}
    Recall $P$ from \Cref{sec:upper-bound}. For $\beta≥\delta>0$,
    \begin{equation}\label{eq:lem:tail:1}
        \E \sum_{v \in P} (M_v^n)^{\beta-\delta} = \Ok*{n^{\beta}B^{-\delta}}.
    \end{equation}
    Furthermore,
    \begin{equation}\label{eq:lem:tail:2}
        \E \sum_{v\in [b]^*} n_v^{\beta-\delta}\eins(n_v>2B+s_1) = \Ok*{n^{\beta}B^{-\delta}}.
    \end{equation}
\end{lemma}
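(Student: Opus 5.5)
For \eqref{eq:lem:tail:1} I would reuse the renewal setup from the proof of \Cref{lem:r}. Write $q:=\ln\frac nB$. A node $v$ lies in $P$ if and only if $M^1_v\ge e^{-q}$, and then
\begin{equation*}
(M^n_v)^{\beta-\delta}=B^{\beta-\delta}e^{(\beta-\delta)(q+\ln M^1_v)}.
\end{equation*}
Define
\begin{equation*}
W(t):=\E\sum_{v\in[b]^*} (M^1_v)^{\beta-\delta}e^{(\beta-\delta)t}\eins\bigl(M^1_v\ge e^{-t}\bigr).
\end{equation*}
Conditioning on the root splitter gives the renewal equation $W=G+b\P_{-\ln V}*W$ with $G(t)=e^{(\beta-\delta)t}\eins(t\ge0)$. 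After tilting by $e^{-\beta t}$, the forcing term becomes $e^{-\delta t}\eins(t\ge 0)$. This is directly Riemann integrable because $\delta>0$, so \Cref{thm:krt} yields $W(t)=\Ok{e^{\beta t}}$ in both the lattice and nonlattice cases. For small $t$ I would use the trivial bound, valid since $W$ is monotone and locally bounded. Then
\begin{equation*}
\E\sum_{v\in P}(M^n_v)^{\beta-\delta}=B^{\beta-\delta}W(q)=\Ok*{B^{\beta-\delta}(n/B)^\beta},
\end{equation*}
which is the claim. If $q<0$ then $P$ is empty and there is nothing to prove.

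For \eqref{eq:lem:tail:2} I would split the nodes $v$ with $n_v>2B+s_1$ according to whether $v\in P$ or $v\notin P$.

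\textbf{Case $v\in P$.} Write $n_v=n'_v+n''_v$ and use \eqref{eq:n''-upper}, which gives $n''_v\le s_1$. Hence $n_v\le n'_v+s_1$. Conditional on $\Gc$, $n'_v\sim\Bin(n,M^1_v)$, so by Jensen $\E[(n'_v)^{\beta-\delta}\mid\Gc]\le (M^n_v)^{\beta-\delta}$. Since $n_v\ge 2B$ forces $n'_v\ge 2B-s_1\ge1$ for large $B$, the additive $s_1$ costs only a constant factor, and this case reduces to \eqref{eq:lem:tail:1}.

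\textbf{Case $v\notin P$.} Each such $v$ is a descendant of a unique $r\in R$, and $n_v\le n_r$. I would bound
\begin{equation*}
\sum_{v\succeq r}n_v^{\beta-\delta}\eins(n_v>2B+s_1)\le \eins(n_r>2B+s_1)\,\sum_{v\succeq r,\;n_v>s} n_r^{\beta-\delta}.
\end{equation*}
The number of internal nodes in the subtree of $r$ has conditional expectation $\Ok{n_r^\beta}$ by \Cref{lem:order} applied to $\phi=1$. This gives the bound $\E\sum_{r\in R} n_r^{2\beta-\delta}\eins(n_r>2B+s_1)$.

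Now $M^n_r<B$, and $n''_r\le s_1$ by \eqref{eq:n''-upper}. So $n_r>2B+s_1$ forces $n'_r-M^n_r>B$, which is a large deviation of a binomial whose mean is below $B$. A Chernoff bound, conditional on $\Gc$, gives
\begin{equation*}
\E\bigl[(n'_r+s_1)^{2\beta-\delta}\eins(n'_r>2B)\mid\Gc\bigr]\le C e^{-cB},
\end{equation*}
with $C,c$ independent of $n$; this uses the fact that the tail of $\Bin(n,p)$ with $np<B$ above $2B$ is dominated by a Poisson tail. Summing over $R$ and using $\E\abs R=\Ok{(n/B)^\beta}$ from \Cref{lem:r} then gives $\Ok{n^\beta B^{-\beta}e^{-cB}}$, which is $\Ok{n^\beta B^{-\delta}}$ for all $B\ge1$ after adjusting constants.

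I expect the main obstacle to be the second case. Two points need care there. First, the count of subtree nodes must be justified conditionally on $n_r$: this rests on the subtree of $r$ given $n_r$ being distributed as $T_{n_r}$, together with \Cref{lem:order}. Second, the Chernoff estimate has to hold uniformly in $n$ and in the masses; I would handle this by comparison with a Poisson variable of mean $B$.
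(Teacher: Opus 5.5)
Your proof is correct and follows the paper's argument. For \eqref{eq:lem:tail:1} you use the same renewal equation, whose tilted forcing term is $e^{-\delta q}\eins(q\ge 0)$. For \eqref{eq:lem:tail:2} you use the same split: nodes $v\in P$ are handled via $n''_v\le s_1$ and Jensen, and subtrees of nodes $r\in R$ via the crude $\Ok{n_r^{\beta}}$ node count from \Cref{lem:order}. In that count, include all nodes of the subtree, not only those with $n_v>s$, so that your displayed inequality also holds when $2B+s_1<s$.

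The only real difference is the tail estimate on $R$. Instead of a Chernoff bound, the paper uses the cheaper second-moment bound $(n'_r)^{2\beta-\delta}\eins(n'_r>2B)\le 4(2B)^{2\beta-\delta-2}(n'_r-M^n_r)^2$ together with $\Var(n'_r\mid\Gc)\le M^n_r\le B$. This gives $\Ok{n^\beta B^{\beta-1-\delta}}=\Ok{n^\beta B^{-\delta}}$ because $\beta\le 1$. Your exponential bound $\Ok{n^\beta B^{-\beta}e^{-cB}}$ is stronger than needed but equally valid.
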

\begin{proof}
    The first equation \eqref{eq:lem:tail:1} can be shown similar to \Cref{lem:r}, by (re)defining for $q:=\ln\frac nB$ % chktex 36
    \begin{equation}\label{def:lem:tail:U}
        U(q) := \sum_{v \in [b]^\ast} \eins(M_v^1 ≥ e^{-q})(M_v^1)^{β-δ}e^{q(β-δ)} = 
        \E \sum_{v \in P} \fracc{M_v^n}B^{\mathrlap{\beta-\delta}}.
    \end{equation}
    Note that if $q<0$, then $P$ is empty and thus $U(q):=0$. 
    By splitting according to the first step, we get
    \begin{equation}
        U(q) = \fracc nB^{\mathrlap{\beta-\delta}} \;\eins(q≥0) + b\E \sum_{v \in P} \fracc{M_v^{nV}}B^{\mathrlap{\beta-\delta}} \;
        = e^{(β-δ)q}\eins(q≥0) \; + \Ek*{U(q-\ln(1/V))}.
    \end{equation}
    This is a renewal equation and the term $e^{-q\beta}e^{q\kl{\beta-\delta}}\eins(q≥0) = e^{-q\delta}\eins(q≥0)$ is directly Riemann integrable, so we can again use \Cref{thm:krt} to deduce $U(q) = \Ok*{e^{\beta q}} = \Ok*{n^βB^{-β}}$. Multiplying \eqref{def:lem:tail:U} with $B^{\beta-\delta}$ gives \eqref{eq:lem:tail:1}.
    For the second equation \eqref{eq:lem:tail:2}, we first prove using \eqref{eq:lem:tail:1} that
    \begin{align}
        \Ek[\bigg]{\sum_{v\in P} n_v^{\beta-\delta}} &≤ 
        \Ek*{ \sum_{v\in P} \Ek*{(n'_v)^{\beta-\delta}\given \Gc} + (n_v'')^+}
        \nonumber \\
        &≤ \Ek*{\sum_{v\in P} (M_v^n)^{\beta-\delta}} + \Ok*{n^{\beta}B^{-β}} =
        \Ok*{n^{\beta}B^{-\delta}}.
        \label{eq:lem:tail:3a}
    \end{align}
    Here, we used \Cref{lem:r} and the bound $n_v''≤s_1$
from \eqref{eq:n''-upper}.
%, which can be shown by a slight refinement of the coupling argument in
%\eqref{def:nss}: 
%    We see that this holds for children of the root by comparison of
%    \eqref{def:nv} with \eqref{def:ns}. Then, we assume that the $s_1$
%    additional balls are included in the $s_0+bs_1$ balls missing, so that
%    there again can be at most $s_1$ balls more in $n_v$ than in $n'_v$. 
%    The statement follows by iterating.

    For the rest of the tree outside $P$, which consists of nodes in $R$ and their descendants, we use a similar argument as in \Cref{lem:wenige-extrem}. First note that the contribution of a subtree with $m≥1$ balls is crudely bounded by $\Ok*{m^{2\beta-\delta}}$, since the tree has $\Ok*{m^β}$ nodes by \Cref{lem:order},
    that each contribute at most $\Ok*{m^{\beta-\delta}}$. Hence,
    \begin{equation}
        \E{\sum_{v \in [b]^*} n_v^{\beta-\delta}\eins(n_v>2B+s_1, M^n_v<B)}
        = \Ok*{\El{ \sum_{v \in R} (n_v' + s_1)^{2\beta-\delta}\eins(n_v'>2B)}}.
    \end{equation}
    If $n'_v>2B$, then $n'_v+s_1 ≤ n'_v(s_1+1)$, so the term $s_1$ only contributes a constant factor. Since $2β-δ<2$, we can use $(n'_v)^{2β-δ-2} ≤ (2B)^{2β-δ-2}$ to bound
    \begin{align}
        \Ek*{ \sum_{v \in R} (n_v')^{2\beta-\delta}\eins(n_v'>2B)}
        &= \Ok*{\Ek*{ \sum_{v \in R} (n_v')^{2}(2B)^{-2+2\beta-\delta}\eins(n_v'>2B)}}
        \nonumber \\
        &=  \Ok*{\Ek*{ \sum_{v \in R} \Ek*{(2n_v'-2M^n_v)^2\given\Gc} B^{-2+2\beta-\delta}}}
        \nonumber \\
        &= \Ok*{\Ek*{ \sum_{v \in R} B \cdot B^{-2+2\beta-\delta}}} = \Ok*{n^{\beta}B^{-\delta}},
        \label{eq:lem:tail:3}
    \end{align}
    using first the fact that $n_v'$ has conditional expectation $M^n_v$ and conditional variance of at most $M^n_v≤ B$ and then \Cref{lem:r}. Combining \eqref{eq:lem:tail:3a} and \eqref{eq:lem:tail:3} gives \eqref{eq:lem:tail:2}.
\end{proof}

\begin{proof}[Proof of \Cref{thm:mean}]
For a toll function with $\abs{\phi_n} ≤  C{n^{\beta-\delta}}$ for some $\delta>0$ and $C>0$, we can for all $B$ write $\phi_n$ as
\begin{equation}
\phi_n = \phi_n\eins(n>2B+s_1) + \phi_n\eins(n≤2B+s_1)
\end{equation}
Because the second part is zero for all large $n$, we can apply \Cref{lem:constant-phi} to it, giving us a limit function $\varpi_B$. The
absolute value of the first part is bounded by $Cn^{\beta-\delta}\eins(n>2B+s_1)$, so its contribution to $\Ek{X_n}$ is by \Cref{lem:tail} of the order $\Ok*{n^{\beta}B^{-\delta}}$. Therefore,
\begin{equation}\label{ot:approx}
    \Ek{X_n} = \varpi_B(\ln n)n^\beta + \Ok*{n^{\beta}B^{-\delta}}
    + o_B(n^β),
\end{equation}
where $o_B(n^β)$ is a term that is $o(n^β)$, but not uniformly in $B$.
It follows as in the proof of \Cref{lem:constant-phi} that $\varpi_B$ is a Cauchy sequence in the uniform metric, so we can define $\varpi := \lim_{B\to\infty}\varpi_B$ and show \Cref{thm:mean} for this case. Since $\varpi_B$ is continuous and $d$-periodic for every $B$, the same holds for $\varpi$. The lower bound of $\varpi$ will be proven in the next section, \Cref{sec:lower-bound}.
\end{proof}

This argument also can be used to show \Cref{prop:sum}:
\begin{proof}[Proof of \Cref{prop:sum}]
    First, observe that $\varpi[\phi]$ is linear in $\phi$, since $X_n(\phi)$ is linear in $\phi$. Indeed, if we have two toll functions $\phi$ and $\phi'$ with $\phi_n, \phi_n' = \Ok*{n^{β-δ}}$ for some $δ>0$ and $\alpha\in \RR$, then by \Cref{thm:mean}
    \begin{equation}
        X_n(\phi_n + \alpha\phi_n') = X_n(\phi_n) + \alpha X_n(\phi_n') = \kl[\big]{\varpi[\phi](\ln n) + \alpha\varpi[\phi'](\ln n)}n^β + o(n^β)
    \end{equation} 
    and thus $\varpi[\phi + \alpha\phi'] = \varpi[\phi] + \alpha\varpi[\phi']$. Note that the function $\varpi_B$ from \eqref{ot:approx} above is hence given as
    \begin{equation}
        \varpi_B = \varpi[\phi_n\eins\kl{n≤2B+s_1}] = \sum_{n=1}^{2B+s_1} \phi_n \varpi[\fringephi[n]].
    \end{equation}
    And after \eqref{ot:approx} we showed that $\varpi_B \to \varpi[\phi]$ uniformly, showing \Cref{prop:sum}. 
\end{proof}

\subsection{Lower bound on the mean}\label{sec:lower-bound}

\begin{lemma}\label{lem:lower-bound}
    If $\phi_n ≥ 0$ for all $n$ and $\phi_n > 0$ for some $n≥s_1$, then
    \begin{equation}
        \Ek{X_n(\phi)} = \Omega\kl*{n^β}.
    \end{equation}
    In particular, $\varpi$ from \Cref{thm:mean} is bounded away from zero.
\end{lemma}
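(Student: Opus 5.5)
Fix $m\ge s_1$ with $\phi_m>0$. Since all $\phi_n\ge0$, we drop every term except nodes whose subtree holds exactly $m$ balls:
\begin{equation*}
\Ek{X_n(\phi)} \ge \phi_m\, \E\sum_{v} \eins(n_v=m) = \phi_m\,\Ek{X_n(\fringephi[m])}.
\end{equation*}
So it suffices to show $\E\abs{\mg{v: n_v=m}} = \Omega(n^\beta)$. The plan is to produce, in expectation, order $n^\beta$ nodes of moderate (bounded) mass, and to show that each has probability bounded below by a constant $c>0$ of having a descendant $w$ with $n_w=m$. These descendants are distinct for distinct starting nodes, because the starting nodes form an antichain, namely the set $R$.

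\emph{Step 1.} Take $B$ a large fixed constant and use the set $R$ from \Cref{sec:upper-bound}. \Cref{lem:rzeta}, specifically \eqref{eq:rzeta-lower-bound}, gives $\E\abs{\mg{r\in R: M^n_r\ge e^{-d}B}}\ge c_1 (n/B)^\beta$ for large $n$. (In the nonlattice case we use $\varpi_1>0$.) \Cref{lem:wenige-extrem} gives $\E(\abs R-\abs{R'})=\Ok{n^\beta B^{-\beta-0.2}}$. Taking $B$ large, we obtain $\Omega(n^\beta)$ nodes $r\in R'$ with $n_r\in[e^{-d}B-B^{0.6},B+B^{0.6}]$, which for large $B$ is a bounded interval $[a,A]$ with $a>s$. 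Conditionally on $n_r=k$, the subtree $T^r_n$ is distributed as $T_k$.

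\emph{Step 2.} Show that for every $k\in[a,A]$ we have $p_k:=\Pk{\exists w\in T_k: n_w=m}>0$. Then $c:=\min_{a\le k\le A}p_k>0$, and combining with Step 1 gives $\Ek{X_n(\fringephi[m])}\ge c\cdot\Omega(n^\beta)$.

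Step 2 is the main obstacle, and it is exactly where $m\ge s_1$ is needed, since every child receives at least $s_1$ balls. Starting from $k>s$ balls, I would follow a path and at each split send as many balls as possible to a child $i$ with $\Pk{V_i>0}>0$. Given $\V$ with $V_i>0$, the multinomial \eqref{def:nv} (or \eqref{def:nv-frac}) puts every value in $\{s_1,\dots,k-s_0-(b-1)s_1\}$ for child $i$ with positive probability. Hence one step can reduce the count to any value in that range. Iterating, the count decreases step by step (using $\Pk{V_{(1)}=1}<1$ only to guarantee strict decrease is possible) until it hits $m$ exactly, provided $m$ lies in the reachable range.

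For $m>s$ this works directly by choosing child counts to land on $m$. For $s_1\le m\le s$, if $m\le k-s_0-(b-1)s_1$ at some node with $k>s$, we can land on $m$ directly. The edge case needs care: when $s_0=s_1=0$, a node with exactly $k=s+1$ balls can still produce any count from $0$ to $s+1$. I expect the delicate part to be verifying that the reachable set always contains $m$, case by case in $s_0,s_1$, using $bs_1\le s+1-s_0$. If this causes trouble, I would fall back to choosing $a$ and $A$ such that $n_r$ itself equals $m$ with positive probability whenever $m>s$, and handle $m\le s$ via one extra split.
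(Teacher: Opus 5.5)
Your argument works, after the two repairs below, and it is shorter than the paper's.

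\textbf{Comparison with the paper.} The paper first proves a \emph{uniform} bound $\Ek{X_k}\ge c_0$ for all $k\ge N_0$. It does this with a supercritical Galton--Watson tree of nodes with $V^{(v)}_i>\gamma$, on whose infinite branches some child crosses below a level $N$, plus an estimate that the crossing child lands in $[N-s,N]$ with conditional probability bounded below. Only then does it count nodes of $R'$ with mass at least $B^{0.7}$, each contributing at least $c_0$.

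You observe instead that, for one fixed $B$, the nodes you keep have sizes in a bounded window $[a,A]$. So you only need positivity of the finitely many $p_k$ with $a\le k\le A$. The dependence of $c=\min_k p_k$ on $B$ is harmless, because $c$ multiplies both the main term from Lemma~\ref{lem:rzeta} and the $\E\abs{R\setminus R'}$ error from \eqref{eq:rs}. Hence only their difference must be positive, at a single large $B$.

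This removes the Galton--Watson step entirely. It also makes explicit the claim ``$\Ek{X_n}>0$ for all large $n$'', which the paper only asserts. The paper's route buys a constant $c_0$ independent of $B$, which this lemma does not need.

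\textbf{Repair to Step~2.} Step~2 is easier than you fear, but only for large $k$. Do not try to prove $p_k>0$ for all $k>s$: that is false. For example, with $s_1=0$, $s_0\ge2$ and $k=m+1$, every child has at most $k-s_0<m$ balls. Since $a\to\infty$ with $B$, choose $B$ so large that $a\ge\max\{s+1,\,m+s_0+(b-1)s_1\}$, with $m\ge1$.

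Then fix $i$ with $\Pk{0<V_i<1}>0$. Such an $i$ exists: otherwise every $V_i\in\{0,1\}$ a.s.\ with at most one equal to $1$, contradicting $\Pk{V_{(1)}=1}<1$ for split trees and $\sum_i\Pk{V_i>0}>1$ for fractional ones. Note that $V_i>0$ alone is not enough, since $V_i=1$ makes the count of child $i$ deterministic. On that event, child $i$ of the root of $T_k$ has $s_1+\Bin(k-s_0-bs_1,V_i)$ balls, which equals $m$ with positive probability. One split suffices; no iteration or case analysis is needed.

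\textbf{Repair to Step~1 in the nonlattice case.} When $d=0$, the set $\{r\in R: M^n_r\ge e^{-d}B\}$ is empty. Moreover $\varpi_1$ alone depends on $\eps=B^{-1/30}$ and tends to $0$ as $B\to\infty$. So ``taking $B$ large'' against the $\Ok{n^\beta B^{-\beta-0.2}}$ error is not justified by $\varpi_1>0$. It survives only because $\varpi_1\gtrsim\eps^2=B^{-1/15}$, which you would have to prove.

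It is simpler to sum over all $\zeta\in S$ with $\zeta\ge2\eps^2$, as the paper does. The windows $(-\ln\zeta,-\ln(\zeta-\eps^2)]$ then cover $(0,-\ln\eps^2]$. This gives a lower bound on $\sum_\zeta\varpi_\zeta$ that is uniform in $\eps$, in both the lattice and nonlattice cases. The sizes $n_r$ still lie in the bounded window $[\eps^2B-B^{0.6},\,B+B^{0.6}]$, so the rest of your argument is unchanged.
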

\begin{proof}
    First, we show that $F(n) := \Ek{X_n} = \Theta(1)$. From the assumption on $\phi$ we know that $\Ek{X_n} > 0$ for all large $n$, but $\Ek{X_n}$ could still be converging to zero. To show that this is not the case, we fix some large $N$ and consider only the nodes 
with at most $N$ balls; by grouping these nodes according to their
nearest ancestor with more than $N$ balls, we obtain for $n>N$
    \begin{equation}
        F(n) = \Ek{X_n} \ge \Ek[\bigg]{\sum_{v\in[b]^\ast} \sum_{i=1}^b \eins(n_v>N, n_{vi}≤N) F(n_{vi})}.
    \end{equation}
    By choosing $N$ big enough, $c := \min\mg{F(N-s),F(N-s+1),\dots, F(N)}$
    is positive, and we can bound $\Ek{X_n}$ by, for any $\gamma>0$,
    \begin{align}
        \Ek{X_n} &≥ \sum_{v\in[b]^\ast} \sum_{i=1}^b c\Pk{N-s≤n_{vi}≤N<n_v}
        \nonumber \\ &\ge \sum_{v\in[b]^\ast} \sum_{i=1}^b c\Pk{n_{vi}≤N<n_v, V^{(v)}_i > \gamma}
        \Pk{N-s≤n_{vi}\given n_{vi}≤N<n_v, V^{(v)}_i > \gamma},
        \label{lb:sum}
    \end{align}
    where $V^{(v)}_i$ is the $i$-th splitter of $v$, i.e.\ $M^1_{vi}/M^1_v$.
    We claim that in \eqref{lb:sum} both the sum over the unconditioned probabilities and the conditioned probabilities are $\Theta(1)$
if $\gamma$ is small enough.

    For the sum of probabilities note first that $\lim_{\gamma\to 0}
    \sum_{i=1}^b \Pk*{V_i>\gamma} = \sum_{i=1}^b \Pk*{V_i>0} > 1$ by
    continuity from below and our assumptions on $\V$. So we can choose
    $\gamma>0$ such that $\sum_{i=1}^b \Pk*{V_i>\gamma} > 1$. Consider the set
    $\mathcal A$ of nodes $vi$ for $v\in[b]^\ast$, $i\in[b]$ such that
    $V^{(v)}_i>\gamma$. Every node has by the assumption on $\gamma$ in
    expectation more than 1 child in $\mathcal A$, and the numbers of children in $\mathcal A$ are independent and have the same distribution for every node. Defining the root to be in $\mathcal A$, the connected component of the root is thus a supercritical Galton-Watson tree and there is a positive probability that $\mathcal A$ contains an infinitely long branch, on which there is always one node $vi$ with $n_{vi} ≤ N < n_v$. Since this node also has $V^{(v)}_i>\gamma$ by virtue of being in $\mathcal A$, this proves that 
the sum of the unconditional probabilities in \eqref{lb:sum} is $\Theta(1)$.

    For the conditional probabilities in \eqref{lb:sum}, observe that additionally conditioning on $n_v$ removes the dependency on the node $v$. It is therefore enough to
    show that for any $1≤i≤b$,
    \begin{align}
        & \inf_{n>N}\Pk{N-s≤n_{i}\given n_{i}≤N, V_i > \gamma}
        \nonumber \\
        ={}& \inf_{n>N}\frac{
            \sum_{j=N-s-s_1}^{N-s_1} \Ek*{\binom{n-s_0-bs_1}{j}
             V_i^j(1-V_i)^{n-s_0-bs_1-j} \given 1\ge V_i>\gamma}
        }{
            \sum_{j=0}^{N-s_1} \Ek*{\binom{n-s_0-bs_1}{j} V_i^j
             (1-V_i)^{n-s_0-bs_1-j} \given 1 \ge V_i>\gamma}
        },
    \end{align}
    is positive. The binomial weights follow from \eqref{def:nv}.
    The term in the infimum is positive for any finite $n$ (this requires
    the fact that $s\ge s_0+bs_1-1$ for $n=N+1$), so it suffices to show that its liminf is positive. Since $V_i>\gamma$ is bounded from below, the dominating term is the one with $j=N-s_1$, and the fraction converges to 1, concluding the proof that $\Ek{X_n} = \Theta(1)$.
    
    % Condition on $\Gc$ and consider the branch $v_0,v_1,v_2,\dots$ of the tree so that $v_0=\eps$ and $v_{i+1}$ is the child of $v_i$ with the highest mass $V_{(1)}M_{v_i}^n$. In the case of ties, any choice will do.
    % Let $n>N_0$ and consider the last node $v$ on this branch that has $n_v>N_0$ and condition on $n_v$.
    % By conditioning on $V_{(1)}$ of this node, we find the probability that its left-most child $v1$ has exactly $n_{v1} = N_0$ balls in its subtree to be
    % \begin{equation}
    %     \frac{
    %         \Ek*{\binom{n_v}{N_0} V_{(1)}^{N_0}(1-V_{(1)})^{n_v-N_0} \given n_v}
    %     }{
    %         \Ek*{\sum_{j=0}^{N_0} \binom{n_v}{j} V_{(1)}^j (1-V_{(1)})^{n_v-j} \given n_v}
    %     } \longrightarrow \Pk{V_{(1)}>0} \text{ as } n_v \to \infty
    % \end{equation}
    % and positive. Therefore, the probability that the left-most branch has at least one node with $N_0$ balls is positive, and therefore $\inf_{n≥N_0}\Ek{X_n}>0$.

    We can thus choose $c_0>0$ and $N_0≥1$ such that $\Ek{X_n}≥c_0$ for all $n≥N_0$, set $\eps = N_0\inv$ and use $B=\eps^{-30} = N_0^{30}$ and $R_{\zeta B}$ from \Cref{lem:rzeta}.  Define
    \begin{equation}
        R'' := R' \cap \bigcup_{\mathclap{\substack{\zeta\in S\\\zeta≥B^{-0.3}+\eps²}}} R_{\zeta B},
    \end{equation}
    i.e.\ nodes $v \in R$ which have $M_v^n ≥ B^{0.7}$ and $\abs{n_v-M_v^n} < B^{0.6}$ and therefore $n_v≥B^{0.7}-B^{0.6}>N_0$. 
    Note that $B^{-0.3}=\eps^{9}$ is much smaller than $\eps^2$, so we can instead sum over $\zeta≥2\eps^2$.
    The subtrees of the nodes in $R''$ are independent conditional on $\Gc$ and started with at least $N_0$ balls, so that
    \begin{align*}
        \Ek{X_n} &≥ c_0\E\abs{R''} \\
                &≥ c_0\sum_{\mathclap{\substack{\zeta\in S\\\zeta≥2\eps²}}}\E\abs{R_{\zeta B}} - c_0\Ek{\abs{R\ohne R'}} \\
                &= c_0\sum_{\mathclap{\substack{\zeta\in S\\\zeta≥2\eps²}}}\E\abs{R_{\zeta B}} + \Ok*{n^\beta B^{-\beta-0.2}} \\
                &= c_0n^βB^{-β} \sum_{\mathclap{\substack{\zeta\in S\\\zeta≥2\eps²}}} \varpi_\zeta\kl*{\ln \frac nB} + \Ok*{n^\beta B^{-\beta-0.2}}
                + o_\eps\kl*{n^\beta}
        \numberthis\label{eq:nn-lower-bound}
    \end{align*}
    by \eqref{eq:rs} and \Cref{lem:rzeta}, where $o_\eps$ again means that the convergence speed is depending on $\eps$. By \eqref{eq:rzeta-lower-bound}, the sum in \eqref{eq:nn-lower-bound} is bounded away from zero for $B$ large, so $\Ek{X_n} ≥ \Theta(n^\beta B^{-β}) + \Ok*{n^β B^{-β-0.2}} + o_\eps\kl*{n^\beta}$ and  the statement follows by choosing $B$ sufficiently large.
\end{proof}

\section{Contraction Method}\label{sec:contraction}
\begin{proof}[Proof of \Cref{thm:limit}]
We prove \Cref{thm:limit} with the contraction method in the form \cite{Wild_Nebel_Neininger_2015}, see also \cite{Roesler2001} and \cite{neininger_multivariate_2001} for a more general introduction. Define $X^{(i)}_n$ for $i=1,\dots, b$ as copies of $X_n$, independent of each other and the splitting $\V$ in the first step. Then $X_n$ fulfills the recursion
\begin{equation}
    \label{eq:xn-recursion}
    X_n \overset d= \sum_{i=1}^b X^{(i)}_{I_n^{(i)}} + \phi(T_n) \quad\text{for $n>s$},
\end{equation}
where $I_n^{(i)} = n_i$ is the number of balls that fall into the $i$-th subtree and $X^{(i)}_n$ are copies of $X_n$ that are independent of each other and of $I_n^{(i)}$. The precise distribution of $I_n^{(1)}, \dots, I_n^{(b)}$ is dependent on the specifics of the split tree, but is up to an error of $s$, the multinomial $(n;V_1,\dots, V_k)$ distribution given $\V$.

Rescaling \eqref{eq:xn-recursion} to $Y_n := (X_n-\Ek{X_n})/(\varpi(\ln n)n^{\beta})$, we obtain the recursion
\begin{align}
    Y_n &\overset d= \sum_{i=1}^b Y^{(i)}_{I_n^{(i)}}\fracc{I_n^{(i)}}{n}^\beta \frac{\varpi(\ln I_n^{(i)})}{\varpi(\ln n)}
    + \frac1{n^{\beta}\varpi(\ln n)}\kl*{\sum_{i=1}^b \Ek*{X_{I_n^{(i)}}\given I_n^{(i)}} - \Ek*{X_n} + \phi(T_n) -\Ek*{\phi(T_n)}}
    \nonumber \\&=: \sum_{i=1}^b Y^{(i)}_{I_n^{(i)}}\fracc{I_n^{(i)}}{n}^\beta\frac{\varpi(\ln I_n^{(i)})}{\varpi(\ln n)} + b_n
    \label{yn-recursion}
\end{align}
for $n>s$ and $Y^{(i)}_n$ are again independent copies of $Y_n$. In order to use the contraction method, we have to show
\begin{enumerate}[label= (\Alph*)]
    \item\label{cond:a} convergence $\fracc{I_n^{(i)}}{n}^\beta\frac{\varpi(\ln I_n^{(i)})}{\varpi(\ln n)} \overset{L_2}\to V_i^\beta $ and $b_n \overset{L_2}\to \kl*{\sum_{i=1}^b V_i^\beta}-1$ in $L_2$,
    \item\label{cond:b} that $\sum\limits_{i=1}^b \Ek*{V_i^{2\beta}} < 1$ for the limit $V_i^\beta$,
    \item\label{cond:c} and convergence $\sum\limits_{i=1}^b \Ek*{\fracc{I_n^{(i)}}{n}^{2\beta}\eins\mg{I_n^{(i)}≤ m \text{ or } I_n^{(i)}=n}} \to 0$ for all $m\in\NN$.
\end{enumerate}
Note that our Condition \ref{cond:c} is from \cite[Theorem 4.1]{neininger_multivariate_2001} instead of \cite{Wild_Nebel_Neininger_2015}, where $I_n^{(i)} < n$ is assumed.

For Condition \ref{cond:a}, note that we have almost sure convergence $I^{(i)}_n/n \to V_i$ due to the strong law of large numbers and that $I^{(i)}_n/n ≤ 1$ is bounded. Thus, we also have $L^2$ convergence
\begin{equation}\label{cm:v-convergence}
\fracc{I^{(i)}_n}{n}^β \to V_i^β.
\end{equation}

In the nonlattice case, $\varpi$ is just a constant, so $\varpi(\ln I^{(i)}_n)/\varpi(\ln n) = 1$. In the lattice case,
we apply the logarithm, and see that $\ln(I^{(i)}_n) - \ln(n) - \ln V_i$ converges to zero a.s. Since $\varpi$ is uniformly continuous, we know that \begin{equation}\label{cm:varpi-distance}
\varpi\kl*{\ln I^{(i)}_n} - \varpi\kl*{\ln(n) + \ln V_i} \longrightarrow 0 \quad \text{ a.s.}
\end{equation}
Because $\varpi$ is $d$-periodic and $\ln V_i \in d\ZZ$, we have $\varpi\kl*{\ln(n) + \ln V_i} = \varpi\kl*{\ln(n)}$ a.s.,
and we can use the fact that $\varpi$ is bounded away from zero to divide \eqref{cm:varpi-distance} by $\varpi(\ln n)$ to conclude that
\begin{equation}\label{cm:varpi-fraction}
    \frac{\varpi\kl[\big]{\ln I^{(i)}_n}}{\varpi\kl[\big]{\ln n}} \longrightarrow 1 \quad \text{ a.s.}
\end{equation}
Since $\varpi$ is also bounded, the convergence also holds in $L_2$. Equations \eqref{cm:v-convergence} and \eqref{cm:varpi-fraction} together show the first part of Condition \ref{cond:a}.

For $b_n$ note first that the term $\kl[\big]{\phi(T_n) - \Ek*{\phi(T_n)}}n^{-β}$ converges to zero in the $L^2$ norm because of the moment condition $\Ek{\phi(T_n)^2} = o(n^{2β})$. Applying \Cref{thm:mean} to \eqref{yn-recursion} yields
\begin{equation}
    b_n = \sum_{i=1}^b \frac{\varpi\kl[\big]{\ln I^{(i)}_n} \kl[\big]{ I^{(i)}_n}^β}{\varpi\kl[\big]{\ln n} n^β} - 1 + o(1)
    \longrightarrow \kl[\bigg]{\sum_{i=1}^b V_i^β} - 1
\end{equation}
in the $L_2$ norm using \eqref{cm:v-convergence} and \eqref{cm:varpi-fraction}.

Condition \ref{cond:b} follows from that $\sum_{i=1}^b \Ek*{V_i^{2\beta}} ≤ \sum_{i=1}^b\Ek*{V_i^{\beta}} =  1$ with equality if and only if $V_i\in (0,1)$ a.s.\ for all $i$. We have ruled this out in the definition of $\V$.

The value in the expectation of the technical Condition \ref{cond:c} is bounded by $\fracc mn^{2\beta} + \sum_{i=1}^b\Pk{I_n^{i} = n}$ and goes to zero if and only if $s_0+s_1>0$ or $\Pk{V=1}=0$. That leaves the special case $\Pk{V=1}>0$ and $s_0=s_1=0$, which we will show later in this section.

By the contraction method \cite[Section 4.1]{Wild_Nebel_Neininger_2015}, $Y_n$ converges in $\ell_2$ to the unique solution $Y$ among all centered random distributions with second moments of the recurrence
\begin{equation}
    Y \overset d= \sum_{i=1}^b V_i^\beta Y^{(i)} + \sum_{i=1}^b V_i^\beta - 1,
\end{equation}
where $Y^{(i)}$ are independent copies of $Y$. Adding 1 to both sides gives \eqref{y-recursion}:
\begin{equation}
    Y + 1 \overset d= \sum_{i=1}^b V_i^\beta (Y^{(i)} + 1).
\end{equation}
The variance of $Y$ can be calculated by conditioning \eqref{y-recursion} on $\V$:
\begin{align}
    \Var(Y) &= \Ek*{\Vark*{\sum_{i=1}^b V_i^\beta (Y^{(i)} + 1)\given \V}} + \Vark*{\Ek*{\sum_{i=1}^b V_i^\beta (Y^{(i)} + 1)\given \V}}
    \nonumber \\
    &= \Ek*{\sum_{i=1}^b V_i^{2\beta}}\Var(Y) + \Vark*{\sum_{i=1}^b V^\beta_i},
    \label{cm:var-y}
\end{align}
using independence of $Y^{(i)}$ and $\V$.
The expectation $\Ek*{\sum_{i=1}^b V_i^{2\beta}}$ is smaller than 1 because $\Pk*{V_{(1)} = 1} < 1$, and solving for $\Var(Y)$ gives \eqref{var-y}.

From \eqref{var-y} is it obvious that $\Var(Y) = 0$ if and only if $ \Vark*{\sum_{i=1}^b V^\beta_i} = 0$. Since we know the expectations of both variables, we can instead state that $Y=0$ a.s.\ if and only if $\sum_{i=1}^b V^\beta_i = 1$ a.s.

This completes the proof of \Cref{thm:limit} except in the special case
 $\Pk{V=1}>0$ and $s_0+s_1=0$ treated below.
\end{proof}

\begin{proof}[Proof of \Cref{thm:limit} if\/ $\Pk{V=1}>0$ and\/ $s_0+s_1=0$]
The case $\Pk{V=1}>0$ and $s_0=s_1=0$ can be resolved as follows: We condition on the first time where $V_{(1)} = \max_{i=1}^b V_i < 1$. Instead of \eqref{eq:xn-recursion}, we obtain the similar recursion
\begin{equation}\label{eq:x-recursion-alter}
    X_n \overset d= \sum_{i=1}^b X^{(i)}_{\widetilde I_n^{(i)}} + \widetilde\phi(T_n),
\end{equation}
where $\widetilde I_n^{(i)}$ are the number of balls that fall into the $i$-th subtree conditional on $V_{(1)}<1$, and $\widetilde\phi(T_n)$ is
\begin{equation}
    \widetilde\phi(T_n) := \sum_{v\in T_n} \eins(M_v^n = n)\phi(T_n^v).
\end{equation}
By the Minkowski inequality and the fact that $T_n^v$ is distributed as $T_n$ conditional on $M_v^n=n$,
\begin{align}
    \lnorm*2{\widetilde\phi(T_n)} &≤ \sum_{k=0}^\infty \lnorm[\bigg]2{\sum_{\substack{v\in T_n\\\abs{v}=k}} \eins(M_v^n=n)\phi(T_n^v)}
    \nonumber \\ &= \sum_{k=0}^\infty \lnorm[\bigg]2{\sum_{\substack{v\in T_n\\\abs{v}=k}} \eins(M_v^n=n)}\lnorm*2{\phi(T_n)}.
\end{align}
Since the probability of a node $v\in[b]^*$ having a child $i\in[b]$ with $M_{vi}^n = M_v^n$ is ${\Pk{V_{(1)}=1}} < 1$,
this yields
\begin{equation}\label{cm:widetilde-phi}
    \lnorm*2{\widetilde\phi(T_n)} ≤ \sum_{k=0}^\infty \Pk{V_{(1)}=1}^{k/2}\lnorm*2{\phi(T_n)} = \Ok{n^{β-δ}}.
\end{equation}
Define $(\widetilde V_1,\dots, \widetilde V_b)$ as $(V_1,\dots, V_b)$ conditioned on $V_{(1)}<1$. We can see that \eqref{eq:x-recursion-alter} is \eqref{eq:xn-recursion} with $(V_1,\dots, V_b)$ replaced by $(\widetilde V_1,\dots, \widetilde V_b)$ and $\phi$ replaced by $\widetilde\phi$.
Because of \eqref{cm:widetilde-phi}, 
and $\widetilde V_{(1)} < 1$ by the conditioning, 
we can therefore 
apply the previous case of the theorem,
which gives convergence of $\frac{X_n-\Ek{X_n}}{n^β\varpi(\ln n)}$ to a limit $\widetilde Y$ which fulfills the recursion 
\begin{equation}\label{y-recursion-alt}
    \widetilde Y+1 \overset d= \sum_{i=1}^b \widetilde V_i^β(\widetilde Y^{(i)} + 1),
\end{equation}
where $\widetilde Y^{(i)}$ are i.i.d.\ copies of $\widetilde Y$ independent
of $\widetilde V_1,\dots, \widetilde V_b$.
(Note that $\widetilde V_1,\dots, \widetilde V_b$ satisfy \eqref{def:beta}
with the same $\beta$ as $V_1,\dots,V_b$.)
However, since the recursion \eqref{y-recursion} reduces to just $Y + 1
\overset d= Y+1$ in the case of $V_{(1)} = 1$, one can see that the two
recursion equations \eqref{y-recursion}  and \eqref{y-recursion-alt} are equivalent and thus $Y \overset d= \widetilde Y$.
\end{proof}

\begin{proof}[Proof of \Cref{rem:multiple-convergence}]
We can repeat the same argument, replacing $\fracc{I_n^{(i)}}{n}^\beta\frac{\varpi(\ln I_n^{(i)})}{\varpi(\ln n)}$ with a diagonal matrix with $\fracc{I_n^{(i)}}{n}^\beta\frac{\varpi[\phi^{(i)}](\ln I_n^{(i)})}{\varpi[\phi^{(i)}](\ln n)}$ on the diagonal and $b_n$ with a vector, where $\varpi$ is replaced by $\varpi[\phi^{(i)}]$ for the $i$-th entry. Since the contribution of $\varpi$ cancels, we see that we get the vector with $Y$ on every entry as result. Alternatively, we can remark that the proof used in \cite{neininger_multivariate_2001} actually gives $L^2$ convergence to $Y$ and thus naturally extends to a multivariate version.
\end{proof}

\section{Calculation of the limit function \texorpdfstring{$\varpi$}{}}\label{sec:constants}

In this section we prove \Cref{thm:constants}. Again, we only care for the
mean and thus 
we can replace the possibly random $\phi(T_n)$ by $\phi_n$ and assume
that $\phi(T)$ depends only on $|T|$.
For a toll function $\phi_n$ with $\phi_n = \Ok*{n^{β-δ}}$ for some $δ>0$, we want to find the limit function $\varpi := \varpi[\phi]$ from \Cref{thm:mean}, which states that
\begin{equation}
    \Ek*{X_n(\phi)} = \varpi[\phi](\ln n)n^β + o(n^β).
\end{equation}

\subsection{Poissonization}\label{sec:poisson}

Consider for a Poisson variable $\poissonn \sim \operatorname{Poi}(λ)$ with $λ>0$ the split tree with $\poissonn$ many balls, which we call $\poisson T_λ := T_{\poissonn}$, where we assume that $\poissonn$ is independent of $T_n$ for all $n$.
From this tree, we can define a poissonized version of $X_n$, defined as $\poisson X_λ := X_{\poissonn}$. This has the same asymptotic mean as $X_n$:

\begin{lemma}\label{lem:poisson-mean}
    If $\phi_n = \Ok*{n^{β-δ}}$ for some $δ>0$, then \Cref{thm:mean} applies, and
    \begin{equation}
        \Ek*{\poisson X_{n}} = \Ek*{X_n} + o(n^β) = \varpi(\ln n)n^β + o(n^β).
    \end{equation}
\end{lemma}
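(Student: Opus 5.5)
We need to show $\E X_{\poissonn[n]} = \E X_n + o(n^\beta)$, where $\poissonn[n]\sim\operatorname{Poi}(n)$ is independent of the trees. The plan is to condition on $\poissonn[n]=m$, write $\Ek{\poisson X_n} = \sum_m \Pk{\poissonn[n]=m}\Ek{X_m}$, and compare $\Ek{X_m}$ with $\Ek{X_n}$ separately for $m$ near $n$ and for $m$ far from $n$. The second equality in the statement then follows from \Cref{thm:mean}.

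\textbf{Near range, $\abs{m-n}\le n^{1/2+\eta}$.} By \Cref{thm:mean}, $\Ek{X_m} = \varpi(\ln m)m^\beta + o(m^\beta)$. Uniformly on this window, $m^\beta = n^\beta(1+o(1))$ and $\ln m - \ln n = O(n^{\eta-1/2})\to0$. Since $\varpi$ is bounded and uniformly continuous (it is continuous and $d$-periodic), $\varpi(\ln m) = \varpi(\ln n)+o(1)$ uniformly. The $o(m^\beta)$ error is uniform in $m$ simply because it is a single sequence evaluated at $m\ge n/2$. Hence $\Ek{X_m} = \Ek{X_n} + o(n^\beta)$ uniformly over the window. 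Averaging against the Poisson weights therefore costs only $o(n^\beta)$, because these weights sum to at most $1$.

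\textbf{Far range.} We need a crude polynomial bound $\abs{\Ek{X_m}} = O(m^\beta+1)$ for all $m$. This follows from \Cref{thm:mean}, since $\varpi$ is bounded; alternatively, for bounded $\phi$ it follows from \Cref{lem:order}, and in general from \Cref{lem:tail} together with \Cref{lem:order}. Standard Chernoff bounds give $\Pk{\abs{\poissonn[n]-n}>n^{1/2+\eta}} \le \exp(-cn^{2\eta})$ for fixed small $\eta$. The far-range contribution is then at most
\[
\El{(\poissonn[n]^\beta+1)\,\eins\bigl(\abs{\poissonn[n]-n}>n^{1/2+\eta}\bigr)}
\le \El{(\poissonn[n]+1)^2}^{1/2}\exp(-cn^{2\eta}/2) = o(1).
\]
The same super-polynomially small factor also disposes of the term $\Ek{X_n}\cdot\Pk{\text{far}}$.

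\textbf{Main point to check.} The only delicate step is justifying the uniformity claims. We must make sure that the $o(m^\beta)$ from \Cref{thm:mean} and the continuity modulus of $\varpi$ are used uniformly over the window, which they are, as argued above. We must also confirm that the polynomial bound on $\abs{\Ek{X_m}}$ holds for every $m$, not merely asymptotically. For small $m$, $\Ek{X_m}$ is finite since the tree with $m$ balls has finite expected size, and these finitely many values are absorbed into the constant.
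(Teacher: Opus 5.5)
Your proof is correct and follows essentially the same route as the paper: condition on $\poissonn[n]$, insert \Cref{thm:mean}, and use the concentration of the Poisson variable together with boundedness and uniform continuity of $\varpi$. The only difference is technical. You split into a window $\abs{m-n}\le n^{1/2+\eta}$ plus a Chernoff tail, whereas the paper controls the same errors globally with Jensen and Cauchy--Schwarz (e.g.\ $\E\abs{\poissonn[n]-n}^\beta\le n^{\beta/2}$); your explicit handling of the uniformity of the $o(m^\beta)$ term is, if anything, slightly more careful than the paper's.
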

\begin{proof}
    Condition $\poisson X_{n}$ on $\poissonn[n]$. Then, by \Cref{thm:mean},
    \begin{align}
        \Ek*{\poisson X_{n}} &= \Ek*{\poissonn[n]^β \varpi(\ln \poissonn[n]) + o(\poissonn[n]^β)}
        \nonumber \\&= \Ek{\poissonn[n]^β}\varpi(\ln n) + \Ek*{\poissonn[n]^β (\varpi(\ln \poissonn[n])-\varpi(\ln n))} + o\kl*{\Ek{\poissonn[n]}^β}
        \nonumber \\&= n^β\varpi(\ln n) + \Ok*{\Ek*{\abs{\poissonn[n]-n}^β}} + \Ek*{\poissonn[n]^β (\varpi(\ln \poissonn[n])-\varpi(\ln n))} + o(n^β)
        \nonumber \\&= n^β\varpi(\ln n) + \Ok*{\Ek*{\abs{\poissonn[n]-n}^2}^{β/2}} + \Ek*{\poissonn[n]^2}^{β/2} \Ek*{(\varpi(\ln \poissonn[n])-\varpi(\ln n))^2}^{1/2}
        \nonumber \\&\phantom= \qquad +o(n^β)
        \nonumber \\& = n^β \varpi(\ln n) + o(n^β)
        \nonumber \\& = \Ek*{X_n} + o(n^β).
    \end{align}
    We used that $\varpi$ is bounded and continuous and that $\poissonn[n] \sim \Poi(n)$ is concentrated around $n$; on the fourth line we used the Jensen and Cauchy--Schwarz inequalities. 
\end{proof}

Since $n = \poissonn$ is Poisson distributed, we have by a nice property of Poisson variables that if we condition on the splitter $\V$, then $n'_1,\dots, n'_b \sim \operatorname{Mult}(n;V_1,\dots, V_b)$ (see \eqref{def:ns}) are also Poisson distributed and independent, with parameters $λV_1,\dots, λV_b$.

In the rest of this subsection we assume that 
$s_0=s_1=0$. Then the random variables $n_1,\dots, n_b$ coincide with
$n'_1,\dots, n'_b$ for $n>s$, see \eqref{def:nv} and \eqref{def:ns}. 
We also assume that
\begin{equation}%\tag{A1} 
\label{cond:noleaves1}
    \phi_n = 0 \text{ for all } n≤s,
\end{equation}
allowing us to ignore the case $n≤s$, leading to the recursion
\begin{equation}
    \Ek*{\poisson X_λ} = \Ek*{\phi(\poisson T_λ)} + \sum_{i=1}^b \Ek*{\poisson X_{λV_i}},
\end{equation}
which we can alternatively write using the definition
\begin{equation}\label{def:fl}
    f(λ) := \mu\inv \Ek*{\phi(\poisson T_λ)} = \mu\inv \sum_{n=1}^\infty \phi_n e^{-λ}\frac{λ^n}{n!},
\end{equation}
as
\begin{equation}\label{const:xl-recursion}
    \Ek*{\poisson X_λ} = μf(λ) + b\Ek*{\poisson X_{λV}}.
\end{equation}
Once we define $Z(q) = \Ek*{\poisson X_{e^{q}}}$ and $G(q) =  μf(e^q)$, this becomes the renewal equation $Z = G + (b\P_{-\ln V} * Z)$. Note that $G$ and $Z$ are functions on $\RR$, not just on $[0,\infty)$. By the same technique as before in \Cref{lem:r}, we need to check that
\begin{equation}\label{const:g-bound}
    G(q)e^{-\beta q} = e^{-βq}μf(e^q)
\end{equation}
is continuous and directly Riemann integrable. 
Assume $\phi_n=\Ok*{n^{\beta-\delta}}$
with $0<\delta\le\beta$.
It is easy to see from
\eqref{def:fl} that $f$, when considered as a function on $\CC$, is well-defined and holomorphic at every $λ\in\CC$, and therefore continuous in particular.
For positive $q$, we know that there exist constants $C>0$ such that
\begin{equation}\label{const:g-bound-big}
    \Ek*{\phi(\poisson T_{e^{q}})} ≤ C\Ek*{\poissonn[e^{q}]^{β-δ}} 
≤ C\bigl(\Ek*{\poissonn[e^{q}]}\bigr)^{β-δ} = Ce^{q(β-δ)},
\end{equation}
and thus the term in \eqref{const:g-bound} is bounded by $Ce^{-qδ}$ for positive $q$. For negative $q$, we use Condition \eqref{cond:noleaves1} that $\phi_n = 0$ for $n≤s$ and therefore the terms with $n≤s$ in \eqref{def:fl} disappear. Hence, $f(λ) = \Ok*{λ^{s+1}}$ for $λ\in [0,1]$ and
\begin{equation}\label{const:g-bound-small}
    e^{-βq}μf(e^q) = \Ok*{e^{q(s+1-β)}}
\end{equation}
for negative $q$.

In the nonlattice case, the key renewal \Cref{thm:krt} implies 
\begin{align}
    \Ek*{\poisson X_{λ}} = Z(\ln λ) &= λ^β\frac{1}{µ}\int_{-\infty}^{\infty}e^{-βq}\mu f\kl{e^{q}}\mathrm dq + o(λ^β)
    \nonumber \\&= λ^β\int_{0}^{\infty}f(\nu)\nu^{-1-β}\mathrm d\nu + o(λ^β)
    \label{nonlattice-varpi}
\end{align}
with a change of variables. This integral is known as the Mellin transform
(see e.g.\ \cite{mellin} and \cite[Appendix B.7]{Flajolet_Sedgewick_2009}) of $f$, defined for $z\in \CC$ as
\begin{equation}\label{def:mellin}
    \mellin f(z) = \int_0^\infty f(x)x^{z-1} \mathrm dx
\end{equation}
if the integral is defined. Hence, by \Cref{lem:poisson-mean} and \eqref{nonlattice-varpi},
\begin{equation}\label{nonlattice-mellin}
    \varpi = \mellin f(-β).
\end{equation}
We will show later in \eqref{pf:mellin-dk} that $M_k = \mellin{f}$ if $\phi=\delta_k$, so that \eqref{constants:nonlattice} follows from \eqref{nonlattice-mellin}.

In the lattice case, \Cref{thm:krt} and \eqref{const:xl-recursion} yield
\begin{equation}
    \Ek*{\poisson X_{λ}} = Z(\ln λ) = λ^β\frac{d}{µ}\sum_{u\in\ZZ} e^{-β(du+\restd{\ln λ})}f\kl*{e^{du+\restd{\ln λ}}} + o(λ^β)
\end{equation}
and thus
\begin{equation}
    \varpi(x) = \frac{d}{µ}\sum_{u\in \ZZ} e^{-β(du+\restd{x})}f\kl*{e^{du+\restd{x}}}.
    \label{lattice-sum}
\end{equation}
% By shifting the summation index by $\floord{x}$, we see that we can replace $\restd x$ by $x$ in \eqref{lattice-sum}.
The $m$-th coefficient of the Fourier series of $\varpi$ is hence given by
\begin{align}
%    \MoveEqLeft 
\frac{1}{µ}\int_0^d  e^{-\frac{2\pi im}dx}\varpi(x) \mathrm dx
&=
\frac{1}{µ}\int_0^d \sum_{u\in \ZZ}  e^{-β(du+x)-\frac{2\pi im}dx}f\kl*{e^{du+x}} \mathrm dx
    \nonumber \\&= \frac{1}{µ}\sum_{u\in \ZZ}\int_{ud}^{(u+1)d}  e^{-βx-\frac{2\pi im}dx}f\kl*{e^{x}} \mathrm dx
    \nonumber \\&= \frac{1}{µ}\int_{-\infty}^\infty  e^{-βx-\frac{2\pi im}dx}f\kl*{e^{x}} \mathrm dx
    \nonumber \\&= \frac{1}{µ}\int_{0}^\infty  u^{-β-\frac{2\pi im}d-1}f\kl*{u} \mathrm du
    \nonumber \\&= \frac{1}{µ}\mellin f\kl*{-β-\frac{2\pi im}d}.
    \label{fourier-coeffs}
\end{align}
Under the assumption that $M_k = \mellin{f}$ if $\phi=\delta_k$
(shown below), 
this would show \eqref{constants:lattice}.

\subsection{Constants for fringe trees when \texorpdfstring{$s_0+s_1=0$}{s0+s1=0}}\label{sec:fringe}

The restriction that $\phi$ has no mass on leaves can be lifted in two ways. 
First, note that we can always move mass `up the tree' by defining
\begin{equation}\label{def:parent-phi}
    \phi''(T) = \sum_{i=1}^b \phi(T^i).
\end{equation}
The new toll function $\phi''$ has $\phi''_n = 0$ for $n≤s$ and the difference between $X(\phi)$ and $X(\phi'')$ is then just $\phi(T_n)$, which is just of order $n^{β-δ}$. This works for all toll functions, but $\varpi[\phi'']$ is harder to calculate because of the additional sum in \eqref{def:parent-phi}, cf.\ \cite[Example~3.17]{janson:trie}.

The second way is to consider the split tree $T'_n$ with the same splitter $\V$ and $s=s_0=s_1=0$. The equivalent of $n_v$ in the tree $T'_n$ is $n'_v$ as defined in \eqref{def:ns}, as can be seen from the fact that \eqref{def:ns} and \eqref{def:nv} coincide if $s=s_0=s_1=0$. 
Note that if $β=1$, this tree $T'_n$ has infinitely many nodes with exactly 1 ball, so we have to make sure that either $β<1$ or any toll function $\phi'$ we use on $T'_n$ has $\phi'_1 = 0$.
 (Note that elsewhere in the paper we assume $s\ge1$, to avoid this problem.)

Just like in \Cref{sec:other-toll}, $T_n$ is now the ``bucket'' version of $T'_n$ and can be obtained from $T'_n$ by removing all children of nodes $v$ with $n'_v≤s$. We denote this operation by $\bucket s{\cdot}$, so that $T_n = \bucket s{T'_n}$.

If a node has $s$ or fewer balls, we have to subtract the toll function of its children, so we define $\phi'$ as
\begin{equation}\label{def:leaves-phi}
    \phi'(T) = \phi(\bucket s{T}) - \eins(n_\eps≤s)\sum_{i=1}^b \phi(\bucket s{T^i}),
\end{equation}
where $n_\eps$ is the number of balls in the tree $T$. If either $\phi_1=0$ or $\beta<1$, the sum defining $X_n(\phi')$ on $T_n'$ has only finitely many nonzero terms and is equal to $X_n(\phi)$ on $T_n$, because
\begin{align}
    \sum_{v\in T'_n} \phi'((T'_n)^v) &= \sum_{v\in T'_n} \kl*{\phi(\bucket s{(T'_n)^v}) - \eins(n'_v≤s) \sum_{i=1}^b \phi\kl*{\bucket s{(T'_n)^{vi}}}}
    \nonumber \\ &
    = \sum_{v\in T'_n} \phi\kl*{\bucket s{(T'_n)^v}} -
    \sum_{\mathclap{\substack{\eps \ne v\in T'_n \\ n'_{\operatorname{parent}(v)}≤s}}} \phi\kl*{\bucket s{(T'_n)^{v}}}
    \label{pf:two-sum-row} \\ &
    = \phi\kl*{\bucket s{T'_n}} + \sum_{\mathclap{\substack{\eps \ne v\in T'_n \\ n'_{\operatorname{parent}(v)}>s}}} \phi\kl*{\bucket s{(T'_n)^v}}
    \nonumber \\ &
    = \sum_{v\in T_n} \phi(T_n^v).
    \label{pf:bucket-back}
\end{align}
In the last line \eqref{pf:bucket-back} we used the fact the fringe trees $\mg{T_n^v\::\:v\in T_n}$ are exactly given by $\bucket s{(T'_n)^v}$ for nodes $v$ that don't have a parent with less than $s$ nodes.

The calculation above fails if $\beta=1$ and $\phi_1>0$ because the two sums in \eqref{pf:two-sum-row} become infinite. If $\beta=1$ and the toll function is $\fringephi[1]$ defined in \eqref{def:phi-k}, something interesting happens instead: Since a node with one ball has exactly one child with one ball for $s=0$, we have $\fringephi[1]'(T'_1) = 0$. Thus, writing $n'_v$ for the equivalent of $n_v$ for $v \in T'_n$,
\begin{align}
    \sum_{v\in T'_n} \fringephi[1]'((T'_n)^v) &= \sum_{v\in T'_n} \eins(n'_v=1) - \eins(n'_v≤s) \sum_{i=1}^b \eins(n'_{vi} = 1)
    \nonumber \\ &
    = -\sum_{v\in T'_n}\eins(1<n'_v≤s)\sum_{i=1}^b \eins(n'_{vi} = 1).
    % \nonumber \\ &
    % = -\sum_{\mathclap{\eps \ne v\in T'_n}} \eins(n_v'=1, 1 < n'_{\operatorname{parent}(v)}≤s).
\end{align}
Hence $-\fringephi[1]'$ is counting the number of nodes $v$ with $n'_v=1$ whose parents have between $2$ and $s$ nodes. For $\beta=1$, every ball in $T_n'$ will eventually end up in an infinite chain of nodes with $n'_v=1$. The ball will form a single-ball leaf in $T_n$ if and only if the last node before the chain has more than $s$ balls, so that it is an internal node. As $-\fringephi[1]'$ is counting the number of balls that don't form their own single-ball leaves, we obtain
\begin{equation}\label{pf:leaf-correction}
    X_n(\fringephi[1]) = n + \sum_{v\in T_n'} \fringephi[1]'((T'_n)^v).
\end{equation}
This correction term of $n$ can alternatively be seen by writing the infinite sums in \eqref{pf:two-sum-row} as limits over the depth $\abs{v}$ of nodes, and seeing that the remainder term
\begin{equation}
    -\lim_{m\to\infty} \sum_{\abs v=m}\eins(n'_{\operatorname{parent}(v)}≤s, n_v'=1)
\end{equation} converges to $-n$.

% Compare this with $\fringephi[1]''$ as defined in \eqref{def:parent-phi},
% \begin{align}
%     X_n(\fringephi[1])
%     &= \sum_{v\in T_n} \fringephi[1]''(T_n) 
%     = \sum_{v\in T_n} \eins(n_v>s) \sum_{i=1}^b \eins(n_{vi}=1)
%     \nonumber \\ &= \sum_{v\in T_n'} \eins(n'_v>s) \sum_{i=1}^b \eins(n'_{vi}=1)
%     \nonumber \\ &= \sum_{v\in T_n'} \eins(n'_v>1) \sum_{i=1}^b \eins(n'_{vi}=1) + \sum_{v\in T_n'} \fringephi[1]'((T'_n)^v)
%     \nonumber \\ &= n + \sum_{v\in T'_n} \fringephi[1]'((T'_n)^v)
% \end{align}

\begin{proof}[Proof of \Cref{thm:constants}]
Remember $\fringephi$ from \eqref{def:phi-k} as the toll function counting fringe trees with exactly $k$ balls. 
For $k>s$, we can directly use the calculations in \Cref{sec:poisson},
noting that for $\phi=\fringephi$, the function $f$ defined 
in \eqref{def:fl} is given by
\begin{equation}
    f_k(\nu) := µ\inv\Ek*{\delta_k(\poisson T_{\nu})} = \mu\inv\Pk*{\poissonn[\nu] = k} = \mu\inv e^{-\nu} \frac{\nu^k}{k!}.
\end{equation}
Its Mellin transform (see \eqref{def:mellin}) is given by,
for $\Re z>-k$,
\begin{equation}
    \mellin{f_k}(z) = \frac{1}{\mu} \int_0^\infty e^{-\nu}\frac{\nu^{k-1+z}}{k!}\mathrm d\nu
    = \frac{\Gamma(k+z)}{\mu k!} = M_k(z),
\end{equation}
with $M_k(z)$ from \eqref{constants:fk}.
Together with \eqref{nonlattice-mellin} and \eqref{fourier-coeffs}, this shows \eqref{constants:nonlattice} and \eqref{constants:lattice} for $k>s$.

For $k≤s$ and $k>β$, we use \eqref{def:leaves-phi} to instead consider the toll function $\delta_k'$ on the tree $T'_n$ that has $s=0$.
As shown in \eqref{pf:bucket-back}, this has the same expectation and we can use \Cref{sec:poisson}. Note that that section also works for $s=0$, except for \eqref{const:g-bound-small}, 
where we (for $f_k$) replace $s$ by $k-1$.
The function $f$ from before \eqref{def:mellin} is for $\delta_k'$ given as
\begin{equation}
    f_k(λ) := \frac1{µ}\Ek*{\fringephi'(T'_{\poissonn})} = \frac1{µ}e^{-λ}\frac{λ^k}{k!} - \frac b{µ} \sum_{j=k}^s e^{-λ}\frac{λ^j}{k!(j-k)!}\Ek*{V^k(1-V)^{j-k}}.
\end{equation}
Therefore,
\begin{align}
    \mellin{f_k}(z) &= \frac{1}{\mu} \int_0^\infty
\biggl(
        e^{-\nu}\frac{\nu^{k-1+z}}{k!}
        - \frac b\mu \sum_{j=k}^s e^{-\nu}\frac{\nu^{j-1+z}}{k!(j-k)!}\Ek*{V^k(1-V)^{j-k}}
\biggr)    
\mathrm d\nu
    \nonumber \\&
    = \frac{\Gamma(k+z)}{\mu k!} - \frac{b}{\mu}\sum_{j=k}^s \Ek*{V^k(1-V)^{j-k}}\frac{\Gamma(j+z)}{k!(j-k)!},
    \label{pf:mellin-dk}
\end{align}
or with a change of summation index,
\begin{equation}\label{varpi-small-fringe}
    \mellin{f_k}(z) = \frac{\Gamma(k+z)}{\mu k!} - \frac{b}{\mu}\sum_{j=0}^{s-k} \Ek*{V^k(1-V)^{j}}\frac{\Gamma(k+j+z)}{k!j!}
    = M_k(z).
\end{equation}
Again, this shows \eqref{constants:nonlattice} and \eqref{constants:lattice} for $k>β$ using \eqref{nonlattice-mellin} and \eqref{fourier-coeffs}.

The remaining case for $M_k$ is $k=β=1$. By \eqref{pf:leaf-correction}, we know that we have to correct \eqref{varpi-small-fringe} by a fixed value $n$, which gives 
an additional constant 1 for $\varpi$. 
Note that the first term in \eqref{varpi-small-fringe} and the sum term for $j=0$ cancel, so
in the nonlattice case
\begin{equation}
    \varpi\br{\fringephi[1]} = 1 - \frac{b}{\mu}\sum_{j=1}^{s-1} \Ek*{V(1-V)^{j}}\frac{\Gamma(1+j-β)}{j!} = M_1(-1),
\end{equation}
as in \eqref{constants:f1}. This is also the 0-th Fourier coefficient in the lattice case.

Because the function $\Gamma(a+bi)$ goes to 0 exponentially fast for $a\in\RR$ fixed and $b \to \pm\infty$,
the Fourier coefficients go to 0 exponentially fast and thus $\varpi[\fringephi]$ is smooth for every $k$.
% Note that this is also the limit of \eqref{varpi-small-fringe} for $z=-1$ as $k\to 1$ because $-b\dd k\Ek{V^k} = -b\Ek{V\ln V} = \Ek{-\ln \widehat V} = \mu$. The fixed factor of $n$ does not influence the other Fourier coefficients, showing \eqref{constants:lattice} also for this special case.
% The constant for the number of leaves is now just the sum of \eqref{varpi-small-fringe} over $k=1,\dots, s$ and the number of balls remaining in the tree can be calculated in the same way.

% The only thing left to show for \Cref{thm:constants} is \eqref{constants:general}, which in the nonlattice case is
% \begin{equation}
%     \varpi\br\phi = \mellin*{\sum_{k=1}^\infty \phi_k\fringephi}(-β) = \sum_{k=1}^\infty\phi_k \mellin{\fringephi}(-β) = \sum_{k=1}^\infty\phi_k \varpi\br{\fringephi}.
% \end{equation}
% The sum can be exchanged with the Mellin transform because we know by \eqref{nonlattice-mellin} that the sums/integrals converge absolutely. The same can be done for the lattice case using
% \eqref{lattice-sum}.
\end{proof}

\subsection{Approximation if \texorpdfstring{$s_0+s_1>0$}{s0+s1>0}}\label{sec:approximation}

Even if $s_0+s_1>0$, there is a simple way to approximate $\varpi$: The recursion
\begin{equation}
    \Ek{X_n} = \phi_n + \sum_{i=1}^b \Ek{X_{n_i}} \quad \text{for }n>s
\end{equation}
can be rewritten to
\begin{equation}
    \Ek{X_n} = \phi_n + \sum_{i=1}^b \Ek{X_{n_i}-X_{n_i'}}  + \sum_{i=1}^b \Ek{X_{n_i'}}, \quad \text{for }n>s
\end{equation}
using \eqref{def:nv} and \eqref{def:ns}, which, defining
\begin{equation}\label{def:s0phi}
    \widehat \phi_n := \phi_n + \sum_{i=1}^b \Ek{X_{n_i}-X_{n_i'}}
\end{equation}
is the recursion of $X_n(\widehat\phi)$ for a split tree $T'_n$ with the same splitter $\V$ and $s$, but with $s_0=s_1=0$. So, if $\widehat\phi_n = \Ok*{n^{β-δ}}$, we can apply \Cref{thm:constants} and \Cref{prop:sum} and show that
\begin{equation}\label{approx:nonlattice}
    \varpi[\phi] = \sum_{n=1}^\infty M_n(-β)\widehat \phi_n
\end{equation}
in the nonlattice case and
\begin{equation}\label{approx:lattice}
    \varpi[\phi](x) = \sum_{n=1}^\infty \sum_{m=1}^\infty M_n\kl*{-β - \frac{2\pi imx}d} \widehat \phi_n e^{2\pi im x/d}
\end{equation}
in the lattice case, with the functions $M_n$ defined in \eqref{constants:fk} and \eqref{constants:f1}.

This is not very useful to determine $\varpi\br\phi$ accurately because having a formula for $\widehat\phi_n$ is essentially the same as knowing $\Ek{X_n}$. This is especially obvious when $s_1=0$, where
\begin{equation}
    \widehat\phi_n = \phi_{n+s_0} + \Ek{X_n} - \Ek*{X_{n+s_0}}.
\end{equation}
However, $\widehat\phi_n$ can usually be very easily bounded, e.g., for $\delta_k$ or the number of nodes it is known that adding a node changes $X_n(\phi)$ by at most 1. Therefore $\abs{\widehat \phi_n - \phi_n} ≤ \sum_{i=1}^b \Ek{\abs{n_i-n_i'}} ≤ s_0+2bs_1$.
Then, one can calculate $\widehat \phi_n$ explicitly for small $n≤N$, e.g.\ by listing all possible trees for $T_n$, and estimate \eqref{approx:nonlattice} in the nonlattice case by
\begin{equation}\label{approx:approximation}
   \varpi[\phi] = \sum_{n=1}^N M_n(-β)\widehat \phi_n + \sum_{n=N+1}^\infty M_n(-β)\phi_n \pm (s_0+2bs_1)\sum_{n=N+1}^\infty M_n(-β),
\end{equation}
where by $a = b \pm c$ we mean that $\abs{a-b}≤c$. The last two terms can be explicitly calculated, and the last term is of order $\Ok*{N^{-β}}$.
For nonlattice split trees ($β=1$) and $N>s$, the last term is
exactly $\frac{s_0+2bs_1}{N\mu}$, see \Cref{rem:split-fringe}. So in order to approximate $\varpi[\phi]$ up to an error of $0.01$, you would have to simulate $\widehat \phi_n$ up to $n=100(s_0+2bs_1)/\mu$.

For general $\phi_n$, we can at least say that $\widehat \phi_n$ is of the same order:
\begin{lemma}\label{lem:approx}
    Let $\phi$ be a toll function such that $\phi_n-\phi_{n-1} = \Ok{n^{β-δ}}$ for some $δ\in(0,1)$ and assume $s_0+s_1>0$. Then
    \begin{equation}\label{deltan-0}
        \Ek*{X_n(\phi)-X_{n-1}(\phi)} = \Ok*{n^{β-δ}}
    \end{equation}
    and thus also $\widehat\phi_n = \phi_n + \Ok*{n^{β-δ}}$.
\end{lemma}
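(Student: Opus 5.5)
We plan to couple $T_n$ and $T_{n-1}$ so that they differ by adding a single ball, and to bound the expected change $\Delta_n := \Ek{X_n(\phi)-X_{n-1}(\phi)}$ by summing over the nodes that this extra ball affects. Concretely, we realise $T_{n-1}$ and $T_n$ from the same split vectors, with the extra ball following a random path from the root. The toll $\phi(v)$ can change only at nodes $v$ on the path of the ball. Where the new ball reaches a leaf or empties out, it can also cause at most a bounded expected number of new nodes to appear, by the argument of \Cref{lem:change}. For a node $v$ on the path with $n_v$ balls, the change of $\phi_{n_v}$ is $\phi_{n_v}-\phi_{n_v-1} = \Ok{n_v^{β-δ}}$.

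The difficulty is that, because $s_0+s_1>0$, adding one ball does not simply shift $n_u$ by one along a single path. The multinomial in \eqref{def:nv} is applied to $n_v-s_0-bs_1$, so in the coupling of \eqref{def:nv} for $n_v$ versus $n_v-1$ the difference propagates to exactly one child only when $n_v>s$ in both trees. The node where the parent switches from leaf to internal contributes $\Ok{s}$ balls of change, but that happens once per path. We will therefore write
\begin{equation}
\Delta_n = \E\sum_{k\ge0} \kl*{\phi_{n_{v_k}}-\phi_{n_{v_k}-1}} + \Ok*{1},
\end{equation}
where $v_0=\eps,v_1,\dots$ is the path of the added ball, which stops when the ball stays at a node or disappears.

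The main step is to bound $\E\sum_k n_{v_k}^{β-δ}$. The path $v_k$ is obtained by choosing child $i$ with probability roughly $V_i$, i.e.\ size-biased by $V$, and not by $V^\beta$. Hence $n_{v_k}\approx n\prod_{j\le k}\widehat V^{(1)}_j$, where $\widehat V^{(1)}$ is the size-biased variable with $b v$ in place of $bv^\beta$; for $\beta<1$ the path also dies with positive probability at each step. So the sum behaves like $n^{β-δ}\sum_k \Ek{\sum_i V_i^{1+β-δ}}^k$. Since $1+β-δ>β$ and $\Ek{\sum_i V_i^\beta}=1$, we have $\Ek{\sum_i V_i^{1+β-δ}}<1$ by the argument of Condition~\ref{cond:b}, using $\Pk{V_{(1)}=1}<1$. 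The series is therefore geometric and gives $\Ok{n^{β-δ}}$. To make this rigorous we will condition on $\Gc$, replace $n_{v_k}$ by $n'_{v_k}+s_1$ using \eqref{eq:n''-upper}, and apply Jensen conditionally, $\Ek{(n'_{v})^{β-δ}\mid\Gc}\le (M^n_v)^{β-δ}$. The probability that the ball goes down to $v$ is $M^1_v$ up to the $s_0,s_1$ corrections, which only shorten the path. This yields $\E\sum_v M^1_v\,(M^n_v)^{β-δ}=n^{β-δ}\sum_k \Ek{\sum_i V_i^{1+β-δ}}^k$.

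We expect the main obstacle to be controlling the leaves and the extra nodes created when the added ball triggers a split, since their number is not bounded when $s_0=s_1=0$. In that case, however, the geometric chain from \Cref{lem:change} has bounded expectation, and the assumption $s_0+s_1>0$ makes the new-node count at most $b$. We will also have to handle the case where $\phi$ itself jumps at the leaf/internal transition, which costs at most $\max_{m\le s+1}\abs{\phi_m}=\Ok{1}$. Finally, once $\Ek{X_m-X_{m-1}}=\Ok{m^{β-δ}}$ holds for all $m$, we sum over $m$ between $n_i'$ and $n_i$; since $\abs{n_i-n_i'}$ has bounded expectation by \eqref{def:nss}, and using that increments are uniformly $\Ok{n^{β-δ}}$, \eqref{def:s0phi} gives $\widehat\phi_n=\phi_n+\Ok{n^{β-δ}}$.
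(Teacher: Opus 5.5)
Your path sum is the unrolled form of the paper's one-level inequality $\Delta(n)\le\abs{\phi_n-\phi_{n-1}}+\sum_{i}\Ek{V_i\Delta(n_i)}$. For $\delta\le\beta$ it goes through:
\begin{itemize}
\item internal nodes on the path have $n_v>s\ge s_1$, so $n'_v\ge1$ and the shift by $s_1+1$ costs only a constant factor;
\item Jensen applies to the concave function $x^{\beta-\delta}$;
\item the final split costs $\Ok{1}\subseteq\Ok{n^{\beta-\delta}}$.
\end{itemize}

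The gap is the regime $\delta>\beta$. The hypothesis $\delta\in(0,1)$ permits it whenever $\beta<1$, and the paper uses it right afterwards, letting $\delta\to1$ to get the error $\Ok{N^{-1+\epsilon}}$ in \eqref{approx:approximation}. There the target $\Ok{n^{\beta-\delta}}$ tends to $0$, and two of your steps fail.
\begin{itemize}
\item Your decomposition $\Delta_n=\E\sum_k(\phi_{n_{v_k}}-\phi_{n_{v_k}-1})+\Ok{1}$ only yields $\Delta_n=\Ok{1}$. You would have to show that the probability that the added ball survives down to a node with boundedly many balls, where the leaf change or split happens, is itself $\Ok{n^{\beta-\delta}}$. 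Heuristically it is of order $n^{\beta-1}$, since for $\beta<1$ the ball is lost with positive probability at every level, but your proposal contains no such estimate.
\item The bound $\Ek{(n'_v)^{\beta-\delta}\mid\Gc}\le(M^n_v)^{\beta-\delta}$ is false for a negative exponent. There $x\mapsto x^{\beta-\delta}$ is convex, and $n'_v=0$ has positive probability. You would need an inverse-moment bound for a binomial forced to be at least one. You would also need a lower bound on $n_v$ in terms of $n'_v$, which your coupling does not supply at depth: only $n''_v\le s_1$ holds, while the deficit $-n''_v$ can grow along the path by \eqref{def:nss}.
\end{itemize}
Your last step has the same sign issue. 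Bounding the increments for $m$ between $n'_i$ and $n_i$ by $\Ok{n^{\beta-\delta}}$ uses that $m^{\beta-\delta}$ is increasing.

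The paper sidesteps all of this by not unrolling. It proves $\sup_{m}\Delta(m)/m^{\beta-\delta}<\infty$ by induction from the one-level inequality, valid for $n\ge s+2$. The contraction factor is $\sum_i\Ek{(n_i/n)^{\beta-\delta}V_i}\to\sum_i\Ek{V_i^{1+\beta-\delta}}<1$.
\begin{itemize}
\item Every leaf and splitting effect is confined to the finitely many base values $m\le s+1$. There $\Delta(m)=\Ok{1}=\Ok{m^{\beta-\delta}}$ whatever the sign of $\beta-\delta$.
\item These base-case effects are discounted by the weights $V_i(n_i/n)^{\beta-\delta}$ as they propagate upward.
\item Only one exact shifted binomial $n_i$ occurs at each step, so no accumulated $n''$ deficits arise.
\end{itemize}
This is also where the paper uses $s_0+s_1>0$: it gives $n_i<n$, so the supremum over $m<n$ suffices. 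You use the assumption instead to bound the number of new nodes. If you want to keep your route, you must add the two missing estimates above for the case $\delta>\beta$.
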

\begin{proof}
    The difference $X_n(\phi)-X_{n-1}(\phi)$ is the effect of adding the $n$-th ball. If $n-1>s$, then this ball goes to the $i$-th child of the root with probability $V_i$. We hence have the recursion
    \begin{equation}\label{deltan-1}
        \Delta(n) := \abs[\big]{\Ek*{X_n(\phi)-X_{n-1}(\phi)}} ≤ \abs{\phi_n-\phi_{n-1}}
        + \sum_{i=1}^b \Ek*{\Delta(n_i)V_i} 
    \end{equation}
    for $n≥s+2$. This recursion is amenable to the contraction method: Scaling by $n^{β-δ}$,
    \begin{align}
        \frac{\Delta(n)}{n^{β-δ}} &≤ \sum_{i=1}^b \Ek*{\frac{\Delta(n_i)}{n_i^{β-δ}}\fracc{n_i}n^{β-δ}V_i} + \Ok1 
        \nonumber \\&≤ \Ok1 + \sup_{m<n}\frac{\Delta(m)}{m^{β-δ}} \sum_{i=1}^b \Ek*{\fracc{n_i}n^{β-δ}V_i}
        \nonumber \\&= \Ok1 + \sup_{m<n}\frac{\Delta(m)}{m^{β-δ}} \kl*{ \sum_{i=1}^b \Ek*{V_i^{1+β-δ}} + o(1)},
        \label{deltan-bounded}
    \end{align}
    where we used in the last line that $n_i/n\to V_i$ a.s.\ 
as $n\to\infty$
and that the terms in the expectation are bounded by 1. The supremum does not have to include $m=n$ because $n_i<n$ in the case of $s_0+s_1>0$. 
Since $\delta<1$ we
have that $\sum_{i=1}^b \Ek*{V_i^{1+β-δ}} < 1$ by \eqref{def:beta} and
because $\Pk*{V\in\mg{0,1}}<1$. Hence, \eqref{deltan-bounded} is a
contraction for large $n$, and it follows that $\Delta(n) = \Ok*{n^{β-δ}}$,
i.e., \eqref{deltan-0}. 

The statement on $\widehat \phi_n$ is a immediate consequence of
\eqref{deltan-0} in view of
\eqref{def:s0phi} and $\abs{n_i-n'_i}\le s_0+bs_1$.
\end{proof}
Since we can choose $δ$ arbitrarily close to 1 for the toll functions $\fringephi$ and $1$, we can improve the error bound in \eqref{approx:approximation} to $\Ok*{N^{-1+\epsilon}}$ for any $\epsilon>0$.
\begin{remark}
    \Cref{lem:approx} also holds for $s_0=s_1=0$, but we would have to include an extra argument for the case that $n_i=n$ for some $i\in[b]^*$. The case $s_0+s_1>0$ is sufficient for our purposes, since we only need the lemma to show that $\widehat\phi_n = \phi_n + \Ok*{n^{β-δ}}$, which is trivially true for $s_0=s_1=0$.
\end{remark}

\subsection{Hölder continuity of \texorpdfstring{$\varpi$}{the limit function}}
In this section we show how in the lattice case the order of $\phi_n$ and Hölder continuity of $\varpi[\phi]$ are linked. We will therefore assume for the whole section that we are in the lattice case.

\begin{proof}[Proof of \Cref{prop:holder}]
W.l.o.g.\ we can assume that $\phi_1=0$, since $\varpi\br{\delta_1}$ is
smooth. Furthermore, we can assume that $s_0+s_1=0$, since in the case of
$s_0+s_1>0$, we can use \Cref{lem:approx} 
and \eqref{approx:lattice}
to reduce the problem to $\widehat \phi$ on a tree with $s_0+s_1=0$.

Recall from \Cref{sec:poisson} the 
function $f$ defined in \eqref{def:fl}, and the
equation \eqref{lattice-sum},
which we write as
\begin{equation}
    \varpi = \frac{d}{µ}\sum_{u\in \ZZ} e^{-β(du+{x})}f\kl*{e^{du+ x}},
\end{equation}
since we can replace $\restd{x}$ by $x$ by shifting the summation index. By
using the definition of $f$ in \eqref{def:fl}
as a sum, we can extend $f$ to complex arguments; note that the sum
converges absolutely for all $\lambda$ and thus defines $f$ as an entire
function.

We will now separately treat the terms for $u≤0$ and for $u>0$.
Because $\phi_1=0$, we see from \eqref{def:fl} that $f(λ) = \Ok*{\abs{λ}^2}$
for $\abs{λ}<1$. 
Hence, for any $R>0$ there exist a constant $C_1$ such that if $u≤0$ and $z\in \CC$ with $\Re z≤R$, then
\begin{equation}
    \abs*{e^{-βdu}f\kl*{e^{du+z}}} ≤ C_1e^{-βdu}\abs*{e^{du+z}}^2 ≤\ C_1e^{(2-β)du + 2R}.
\end{equation}
Consequently,
\begin{equation}\label{hö:lower-terms}
    F_1(z) := \sum_{u≤0}e^{-βdu} f(e^{du+z})
\end{equation}
converges uniformly on compact sets in $\CC$, and is thus an entire analytic
function \cite[Theorem 10.27]{Rudin}.
In particular, $e^{-βx}F_1(x)$ is smooth on the real line.

For the sum with positive $u$, we note that by \eqref{const:g-bound-big} we have for real $λ≥1$ that
\begin{equation}\label{hö:f-bound-real}
    \abs{f(λ)} = \Ok{λ^{β-δ}}.
\end{equation}
More generally, for complex $λ\in\CC$ with $\abs{λ}≥1$, we have by \eqref{def:fl} together with \eqref{hö:f-bound-real} applied to the toll function $\abs\phi$,
\begin{align}
    \abs{f(λ)} &≤ \frac1{μ}\sum_{n=1}^\infty \abs{\phi_n}\frac{\abs{λ}^n}{n!} e^{-\Re λ}
    \nonumber \\ &= e^{\abs{λ}-\Re{λ}}\frac{1}{μ}\sum_{n=1}^\infty \abs{\phi_n}\frac{\abs{λ}^n}{n!} e^{-\abs{λ}}
    \nonumber \\ &= \Ok*{e^{\abs{λ}-\Re{λ}}\abs{λ}^{β-δ}}.
    \label{hö:f-bound-comp}
\end{align}
Furthermore, if $\Re λ>0$,
\begin{equation}
    \abs{λ}-\Re λ = \frac{\abs{λ}^2-(\Re λ)²}{\abs{λ}+\Re λ} ≤ \frac{(\Img{λ})^2}{2\Re(λ)}.
\end{equation}
Hence, if $x≥1$ is real and $\abs{λ-x} ≤ \frac12\sqrt x$, then $\abs{λ}-\Re λ ≤ \frac{x}{8(x-\frac12\sqrt x)}≤1$,
and \eqref{hö:f-bound-comp} yields 
\begin{equation}
    \abs{f(λ)} = \Ok*{\abs{λ}^{β-δ}} = \Ok*{x^{β-δ}} \qquad \text{for }\abs{x-λ} ≤ \frac12\sqrt x.
\end{equation}
Consequently, by Cauchy's estimate \cite[Theorem 10.25]{Rudin},
\begin{equation}\label{hö:fs-bound}
    \abs{f'(x)} = \Ok*{x^{β-δ-\frac12}} \qquad \text{for }x≥1.
\end{equation}

Now let
\begin{equation}
    g(q) := e^{-βq}\mu f(e^q).
\end{equation}
Then, \eqref{hö:f-bound-comp} and \eqref{hö:fs-bound} yield, for $q≥0$,
\begin{align}
    \label{hö:g-bound}
    g(q) &= \Ok{e^{-δq}}, \\
    \label{hö:gs-bound}
    g'(q) &= -βg(q) + e^{(1-β)q}μf'(e^q) = \Ok*{e^{(\frac12-δ)q}}.
\end{align}
For $0≤\alpha≤1$ and an interval $J$, let $\mathcal H_α(J)$ denote the Hölder space of functions $h$ on $J$ with finite Hölder norm
\begin{equation}
    \label{hö:hölder-norm}
    \norm{h}_{\mathcal H_α(J)} := \sup_{x\ne y\in J}\frac{\abs{h(x)-h(y)}}{\abs{x-y}^\alpha}.
\end{equation}
It is easy to see that
\begin{equation}
    \norm{h}_{\mathcal H_α(J)} ≤ 2\kl*{\sup_{x\in J} \abs{h(x)}}^{1-α}\kl*{\sup_{x\in J} \abs{h'(x)}}^α
\end{equation}
and thus \eqref{hö:g-bound} and \eqref{hö:gs-bound} imply for $u\ge0$ that
\begin{equation}
    \norm{g}_{\mathcal H_α([ud,(u+1)d])} = \Ok*{e^{\kl{\frac{\alpha}2-\delta}du}}
\end{equation}
Consequently, if $\alpha<2δ$, we have
\begin{equation}
    \norm*{\sum_{u=0}^\infty g(x+ud)}_{\mathrlap{\mathcal H_α([0,d])}}\qquad ≤ \sum_{u=0}^\infty \norm*{g(x)}_{\mathcal H_α([ud,(u+1)d])} < \infty.
\end{equation}
This, together with the fact that $F_1$ from \eqref{hö:lower-terms} is analytic, shows that $\varpi \in \mathcal H_α([0,d])$. Hence, the Fourier series of $\varpi$ converges uniformly by the Dini--Lipschitz test, see e.g.\ \cite[Theorems II.10.3 and II.10.8]{Zygmund}. Furthermore, if $\delta>\frac14$, then $\varpi \in \mathcal H_α([0,d])$ for some $α>\frac12$, and thus the Fourier series converges absolutely by Bernstein's theorem \cite[Theorem VI.3.1]{Zygmund}.
If $\delta>\frac12$, then \eqref{hö:gs-bound} similarly implies that $\varpi$ is
continuously differentiable.
\end{proof}

\begin{proof}[Proof of \Cref{prop:holder-ex}]
Choose $\phi$ such that $X_n(\phi)$ is exactly $n^β\varpi(\ln n)$; by
\eqref{eq:xn-recursion}, this is achieved by taking
\begin{equation}
    \phi(T_n) := n^β\varpi(\ln n) - \sum_{i=1}^{b} {n_i^β\varpi(\ln n_i)}
\end{equation}
and thus
\begin{equation}
    \phi_n = n^β\varpi(\ln n) - \sum_{i=1}^{b} \Ek*{n_i^β\varpi(\ln n_i)}.
\end{equation}
Using that $\varpi$ is $d$-periodic and \eqref{def:beta},
\begin{align}
    \phi_n &= \sum_{i=1}^b \Ek*{n_i^β\kl{\varpi(\ln n)-\varpi(\ln n_i)}} + \varpi(\ln n)\Ek*{n^β - \sum_{i=1}^b n_i^β}
    \nonumber \\ &= \sum_{i=1}^b \Ek*{n_i^β\kl{\varpi(\ln(nV_i))-\varpi(\ln n_i)}} + \varpi(\ln n)\sum_{i=1}^b \Ek*{(nV_i)^β-n_i^β}.
    \label{hc:phi-sum}
\end{align}

We first show that the second summand in \eqref{hc:phi-sum} is $\Ok1$. From the Taylor expansion of $x^β$ one sees that
\begin{equation}
    (nV_i)^β-n_i^β = β(nV_i-n_i)(nV_i)^{β-1} + \Ok*{(nV_i-n_i)^2(nV_i)^{β-2}},
\end{equation}
so, using $\Ek*{n_i\given V_i} = nV_i + \Ok 1$ and $\Ek*{(nV_i-n_i)^2\given V_i} = \Ok*{nV_i + 1}$
from \eqref{def:nss} and \eqref{def:ns}, we have on $nV_i≥1$
\begin{align}
    \Ek*{(nV_i)^β-n_i^β \given V_i} &= β\Ek*{nV_i-n_i\given V_i}(nV_i)^{β-1} + \Ok*{\Ek*{(nV_i-n_i)^2\given V_i}(nV_i)^{β-2}}
    \nonumber \\ &= \Ok1 + \Ok*{(nV_i)^{β-1}}.
    \label{hc:second}
\end{align}
If $nV_i≤1$, the left-hand side of \eqref{hc:second} is also $\Ok1$, showing that the second term in \eqref{hc:phi-sum} is indeed $\Ok1$.

We want to show that the first summand in \eqref{hc:phi-sum} is of order $n^{β-δ}$.
We condition on $V_1$, and note
first  that we can assume $nV_i≥1$, since otherwise the term would trivially
be $\Ok1$.
By the assumed Hölder property of $\varpi$,
\begin{equation}\label{hc:hused}
    \abs*{\varpi(\ln(nV_i)) - \varpi(\ln n_i)} = \Ok*{\abs{\ln(nV_i) - \ln n_i}^{2δ}\minv 1},
\end{equation}
so if $n_i≥e\inv nV_i$, we can use the mean value theorem to show that
\begin{equation}\label{hc:var}
    \abs*{\varpi(\ln(nV_i)) - \varpi(\ln n_i)} = \Ok*{\frac{\abs{n_i-nV_i}^{2δ}}{(nV_i)^{2δ}}}.
\end{equation}
If $n_i<e\inv nV_i$, then the right-hand side of \eqref{hc:hused} is bounded 
and the fraction in the right-hand side of \eqref{hc:var} is bigger than a
constant, so 
\eqref{hc:var} actually holds for all $n_i$ and $V_i≥\frac1n$. % Having V_i=0 would be confusing.

Therefore, using $δ≤β$,
\begin{equation}
    n_i^β\kl{\varpi(\ln(nV_i))-\varpi(\ln n_i)}
    = \Ok*{n_i^β\frac{\abs{n_i-nV_i}^{2δ}}{(nV_i)^{2δ}}}
    = \Ok*{n^{β-δ}\fracc{n_i\kl{n_i-nV_i}^{2}}{(nV_i)^{2}}^δ}.
    \label{hc:full}
\end{equation}
It hence suffices to show that $\Ek*{n_i(n_i-nV_i)^2\given V_i} =
\Ok*{(nV_i)²}$ when $nV_1\ge1$.
To that end, let $\nnn:=n-s_0-bs_1$ and note that we can write $n_i$
conditional on $V_i$ as $Z+s_1$ where
$Z\sim \Bin(\nnn,V_1)$;
hence $n_i-nV_i=Z-\nnn V_i + W$, where $W:=s_1-(s_0+bs_1)V_i=\Ok1$
is non-random given $V_1$.
Consequently, by standard calculations and recalling $nV_i\ge1$,
\begin{align}
  \E[(n_i-nV_i)^2\mid V_i]
= \E[(Z-\nnn V_i)^2\mid V_i]+\Ok1
=\nnn V_i(1-V_i)+\Ok1 
=\Ok{nV_i}
\end{align}
and
\begin{align}
  \E[(n_i-nV_i)^3\mid V_i]&
= \E[(Z-\nnn V_i)^3\mid V_i]+\Ok{\E[(Z-\nnn V_i)^2\mid V_i]}+\Ok1
\notag\\&
=\nnn \bigl(V_i(1-V_i)(1-2V_i)\bigr)+\Ok{nV_i}
=\Ok{nV_i}.
\end{align}
Consequently,
\begin{align}
  \E[n_i(n_i-nV_i)^2\mid V_i]
=   \E[(n_i-nV_i)^3\mid V_i]+
nV_i\E[(n_i-nV_i)^2\mid V_i]
=\Ok{(nV_i)^2},
\end{align}
as required. 
%the sum of $n-s_0-bs_1$ independent Bernoulli($V_i$) random variables $B_1,\dots, B_{n-s_0-bs_1}$ plus $s_1$:
%\begin{equation}
%    n_i(n_i-nV_i)^2 = \kl*{s_1 + \sum_{j=1}^{\mathclap{n-s_0-bs_1}}B_j}\kl*{s_1-V_i(s_0+bs_1) + \sum_{j=1}^{\mathclap{n-s_0-bs_1}}(B_j-V_i)}^2
%\end{equation}
%Writing this as a sum, we see that terms that don't choose the same $j$ in the second sum vanish in expectation because of the centering; and thus the dominating terms are of the form $\Ek*{B_k(B_j-V_i)^2\given V_i}$ for $j\ne k$. Hence,
%\begin{equation}
%    \Ek*{n_i(n_i-nV_i)^2\given V_i} = V_i²(1-V_i)n^2 + \Ok{nV_i} = \Ok*{(nV_i)^2}
%\end{equation}
Hence,
by Jensen's inequality, \eqref{hc:full} implies
\begin{equation}
  \Ek*{ n_i^β\kl{\varpi(\ln(nV_i))-\varpi(\ln n_i)}\mid V_i}  = \Ok*{n^{β-δ}},
\end{equation}
provided $nV_i\ge1$, and as said above this holds trivially also for $nV_i<1$.
Consequently, by \eqref{hc:phi-sum} and \eqref{hc:second},
\begin{equation}
    \phi_n = \sum_{i=1}^b\Ek*{ n_i^β\kl{\varpi(\ln(nV_i))-\varpi(\ln n_i)}} + \Ok1 = \Ok*{n^{β-δ}},
\end{equation}
which finishes the proof.
 
\end{proof}

\subsection{Integral for \texorpdfstring{$s_0+s_1>0$}{s0+s1>0}}\label{sec:integral}

An alternative approach would be to generalize the Poisson model to $s_0+s_1>0$. In this section we assume that we are in the nonlattice case and assume for simplicity that one of the two following holds:
\begin{enumerate}[label=(A\arabic*)]
    \item\label{cond:noleaves} $\phi_n = 0$ for $n\le s$, \quad or
    \item\label{cond:minimals} $s≤s_0+bs_1$, so either $s=s_0+bs_1$ or $s=s_0+bs_1-1$.
\end{enumerate}

Here, we model $\poissonn$ as the points in $[0,λ]$ of a standard Poisson point process on $\RR$. The time $E$ until the first $s_0 + (b-1)s_1$ points appear has the Gamma distribution $\Gamma(s_0 + (b-1)s_1,1)$ and conditional on $E$, the rest of the points is also a standard Poisson point process on $[E,λ]$, with total mass $(λ-E)^+$. The reason we chose $s_0 + (b-1)s_1$ is the following: Every node except possibly the root has at least $s_1$ balls. So it needs $s_0+(b-1)s_1$ more balls to distribute to itself and its children. After it reaches $s_0+bs_1$ balls, the rest is distributed according to the splitter.

Assume we start with $n = \poissonn + s_1$ balls and condition on $n>s$. The first $s_1b+s_0$ balls will be redistributed, leaving us with $\Poi((λ-E)^+)$ balls, which will be distributed multinomially to the $b$ children. The $i$-th child then has $n_i \sim \Poi((λ-E)^+V_i) + s_1$ children, which up to the changed parameter is the same as $n$. This does not hold in general if $n≤s$: If $s_0+bs_1<n≤s$, this model would distribute $n-s_0$ balls to its children, instead of holding all balls. This is why we need either condition \ref{cond:noleaves}, so that this difference does not count; or condition \ref{cond:minimals}, so that this case cannot happen. If neither is true, we would have to make a correction like in \Cref{sec:fringe}.

We now assume \ref{cond:noleaves} or \ref{cond:minimals}. 
We assume also, as usual, $\phi_n=\Ok{n^{\beta-\delta}}$ for some $\delta>0$,
and furthermore  (this is redundant if \ref{cond:noleaves} holds, and
trivial if $s_1=0$)
\begin{enumerate}[label=(A\arabic*)]
\setcounter{enumi}{2}
    \item\label{cond:noleaves-s1} $\phi_{s_1} = 0$. 
\end{enumerate}
We now define 
\begin{align}\label{def:s0-f}
    f(λ) &:= \mu\inv \Ek*{\phi_{\poissonn + s_1}}\eins(λ\ge0), & F(λ) &:= \mu\inv\Ek*{X_{\poissonn + s_1}}\eins(λ\ge0),
\end{align}
cf.\ \eqref{def:fl} where we had $s_1=0$.
Then the paragraph above and \eqref{eq:xn-recursion} show that
\begin{equation}\label{s0-recursion-unbiased}
    F(λ) = f(λ) + b\Ek*{F((ł-E)V)}.
\end{equation}
By the characterization of $\widehat V$ in \eqref{eq:v-bias}, we can replace $V$ by $\widehat V$ to obtain
\begin{equation}\label{s0-recursion}
%    λ^{-β}F(λ) = λ^{-β}f(λ) + \Ek*{(λ\widehat V)^{-β}F((ł-E)\widehat V)},
    F(λ) = f(λ) + \Ek*{\widehat V^{-β}F((ł-E)\widehat V)},
\end{equation}
which except for $E$ can be seen as a multiplicative renewal equation
by multiplying throughout by $λ^{-β}$.
Write
$E^{(1)},E^{(2)},\dots$ and $\widehat V^{(1)}, \widehat V^{(2)}\dots$ for
some independent copies of $E$ resp.\ $\widehat V$. Define $\widehat Y_k :=
\widehat V^{(1)}\cdots \widehat V^{(k)}$ similarly to the proof of \Cref{lem:rzeta} and expand \eqref{s0-recursion} a few steps:
\begin{align}
%    \frac{F(λ)}{λ^{β}} &= \!\begin{multlined}[t]
%    λ^{-β}f(λ) + \Ek*{(λ\widehat Y_1)^{-β}f(λ\widehat Y_1-E^{(1)}\widehat
%    Y_1)} \\+ \Ek*{(λ\widehat Y_2)^{-β}F\kl{λ\widehat Y_2-E^{(1)}\widehat
%    Y_2-E^{(2)}\widehat Y_1}} 
    F(λ) &= \!\begin{multlined}[t]
    f(λ) + \Ek*{\widehat Y_1^{-β}f(λ\widehat Y_1-E^{(1)}\widehat Y_1)} \\+ \Ek*{\widehat Y_2^{-β}F\kl{λ\widehat Y_2-E^{(1)}\widehat Y_2-E^{(2)}\widehat Y_1}}
    \end{multlined}
    \nonumber \\ &= \Ek[\Bigg]{\sum_{j=0}^{m-1} \widehat Y_j^{-β}f\kl[\bigg]{λ\widehat Y_j - \sum_{l=1}^j E^{(j)}\widehat Y_{j}}} + \Ek*{\widehat Y_m^{-β}F\kl[\bigg]{λ\widehat Y_m - \sum_{l=1}^m E^{(j)}\widehat Y_{j}}}.
    \label{s0-recursion-expanded}
\end{align}

As most mass comes from terms near the leaves of the tree, where in \eqref{s0-recursion-expanded} $j$ is large, we can reasonably assume that the sum $\sum_{l=1}^j E^{(j)}\widehat Y_{j}$ can be replaced by its limit for $j\to\infty$; which is the stationary distribution of the discrete-time Markov chain on $[0,\infty)$ with update step
\begin{equation}\label{def:xi-chain}
    x \mapsto \widehat V(x+E).
\end{equation}
(Remember here that $E$ is independent of $\widehat V$ and $E\sim \Gamma(s_0+(b-1)s_1, 1)$.)
Because $\Ek{\widehat V} < 1$, this update step is a contraction in the
$\ell_1$ metric, which implies that there exists a stationary distribution, 
i.e., a distribution such that a random variable $\Xi$ with this
distribution satisfies \eqref{def:xi};
moreover, it follows also that this stationary distribution is unique
among distributions with a finite mean. (In fact, it is easy to see that it
is unique among all distributions, for example by a coupling argument; we
omit the details since we do not need this.)

For a rigorous argument, we
use some Markov renewal theory, see Appendix \ref{sec:mrt}.
We define the functions, for $x,y\in\RR$,
\begin{align}\label{ma1}
   G(x,y) &= e^{-\beta y}f(e^y-x)
,\\\label{ma0}
  H(x,y) &= e^{-\beta y}F(e^y-x) 
%= \mu\inv\Ek*{X_{\poissonn[λ-x] + s_1}}\eins(λ>x)
.\end{align}
Then, letting $\lambda=e^y-x$,
we can rewrite \eqref{s0-recursion} as a Markov chain renewal equation
\begin{equation}\label{s0:z-recursion}
    H(x,y) 
= G(x,y)
+ \Ek*{H\kl[\big]{(x+E)\widehat V,y + \ln \widehat V}}.
\end{equation}
Let $(M_n)_{n=0}^\infty$ be the Markov chain 
with $M_0=x$ and the update step \eqref{def:xi-chain},
and note that $(M_n,-\ln\widehat V^{(n)})_{n=0}^\infty$
is a Markov chain on $\RR_+\times\RR$ of the type in \eqref{mrw9};
furthermore, in this Markov chain the distribution of $-\ln V^{(1)}$ does
not depend on $M_0$, and  the variables $-\ln V^{(i)}$ are i.i.d.
Let $S_n:=-\ln \widehat Y_n = \sum_{i=1}^n (-\ln\widehat V^{(i)})$.
Then we obtain by iterating \eqref{s0:z-recursion}, for any $N\ge0$,
\begin{align}\label{ma3}
H(x,y) 
=  \Ekv[\bigg]x{\sum_{n=0}^{N-1} G(M_n,y-S_n)} 
+\Ek*{H(M_N,y-S_N)}. 
\end{align}

We note that the proof of \Cref{lem:poisson-mean} extends to the present
situation, and thus, as $\lambda\to\infty$,
\begin{align}
  \label{ma4}
F(\lambda)=\mu\inv\varpi\lambda^\beta + o(\lambda^\beta).
\end{align}
Furthermore, \ref{cond:noleaves-s1} implies that for $\lambda\le1$, 
we have $F(\lambda)=\Ok{\lambda}$, which together with \eqref{ma4} implies the
uniform bound
\begin{align}  \label{ma5}
F(\lambda)=\Ok{\lambda^\beta},
\qquad \lambda\ge0.
\end{align}
Consequently, \eqref{ma0} yields, for some constant $C$,
\begin{align}\label{ma6}
  |H(x,y)|\le C,
\qquad x\ge0,\, y\in\RR.
\end{align}

Let $N\to\infty$ in \eqref{ma3}.
We may (by treating the positive and negative parts separately)
assume that $\phi_n\ge0$ for all $n$, and thus $f(\lambda)\ge0$ and
$G(x,y)\ge0$. For the first term on the right-hand side of \eqref{ma3}, we
then may use monotone convergence.
For the last term in \eqref{ma3}, we note that by \eqref{ma0} we have
$H(M_N,y-S_N)=0$ when $e^{y-S_N}<M_N$.
As $N\to\infty$, the law of large numbers shows that a.s.\ 
$S_N\to \infty$ and thus $e^{y- S_N}\to0$, and since $M_N$ 
converges in distribution to $\Xi$ with $\P(\Xi>0)=1$
(by \eqref{def:xi}), it follows that
$\P(e^{y-S_N}<M_N)\to1$, and thus $H(M_N,y-S_N)\to0$ in probability;
this implies $\E\bigl[{H(M_N,y-S_N)}\bigr]\to0$ 
by dominated convergence, justified by \eqref{ma6}.
Consequently, taking the limits yields the renewal equation in the form
\begin{align}\label{ma7}
H(x,y) 
=  \Ekv[\bigg]x{\sum_{n=0}^{\infty} G(M_n,y-S_n)} 
%=(G*U_y)(t),
.\end{align}

To apply the Markov key renewal \cref{thm:mrw-renewal}, we need to be in the
nonlattice case (which we have assumed)
and check that the Markov chain defined by
\eqref{def:xi-chain} is Harris-recurrent. For the recurrence set, one can
choose $\br*{\frac23,1}$. This is recurrent: For every $x$ there is a
positive probability to reach this set, so to not reach the set, the chain
has to diverge, but since the expectation stays bounded, this a.s.\ does not
happen. To check condition \eqref{harris-condition}, fix a small interval
$[y_1,y_2] \subseteq \br*{\frac23,1}$. Then 
\begin{equation}\label{harris-check-1}
    \Pk*{\widehat V(x+E)\in [y_1,y_2]} = \Pk*{E \in \br*{\frac{y_1}{\widehat V}-x, \frac{y_2}{\widehat V}-x}}.
\end{equation}
Because $\Pk{V_{(b)}>0}>0$, there exists some $\alpha>0$ such that $\Pk[\big]{\widehat V\in \br*{\alpha,\frac12}}>0$.
If $\widehat V$ is in this interval $\br[\big]{\alpha, \frac12}$ and $0<x≤1$,
then the interval in \eqref{harris-check-1} is 
at least $y_2-y_1$ long 
and lies inside $\br[\big]{\frac13,\alpha\inv}$. 
\eqref{harris-check-1} can thus be bounded from below by $\Pk*{\widehat V\in \br{\alpha, \frac12}}>0$
times the minimal density of $E$ in $\br*{\frac13,\alpha\inv}$ times the Lebesgue measure.

The condition \eqref{cond:mrw-continuous} holds because $f(λ)$ is continuous
at every $λ\neq0$. To show \eqref{cond:mrw-riemann}, 
which is essentially direct
Riemann integrability integrated over $x$, note that  
the definition \eqref{def:s0-f} implies,
similarly to
\eqref{const:g-bound-big} and \eqref{const:g-bound-small}
and using again \ref{cond:noleaves-s1},
\begin{equation}\label{ma8}
    {f(λ)} = \begin{cases}
        \Ok*{λ} & \text{for } 0\le λ≤1, \\
        \Ok*{λ^{β-\delta}} & \text{for } λ\ge1.
    \end{cases}
\end{equation}
In particular, $f(\lambda)=\Ok*{λ^{β-\delta}}$ for all $λ\ge0$.
Hence, 
%$f(\lambda)= \Ok*{λ^{β-\delta}}\eins(\lambda\ge0)$ for all $\lambda\in\RR$
by \eqref{ma1}, assuming as we may $\delta\le\beta$,
\begin{equation}
  G(x,y) = \begin{cases}
        \Ok*{\eins(y\ge \ln x)} & \text{for } y\le0, \\
        \Ok*{e^{-\delta y}} & \text{for } y\ge0.
    \end{cases}
\end{equation}
Consequently the integrand in \eqref{cond:mrw-riemann} (with $\rho=1$, say)
is $\Ok*{1+(-\ln x)^+}$, and the condition holds because
$\E[(-\ln\Xi)^+]<\infty$, 
which in turn follows becase \eqref{def:xi} (or \eqref{def:xi-chain}) 
shows the stochastic domination $\Xi\ge \widehat V E$ and thus, using
\eqref{def:mu}, 
\begin{equation}
  \label{eq:lnXi}
  \E[(-\ln\Xi)^+]
\le
\E[-\ln\widehat V] + \E[(-\ln E)^+]
<\infty.
\end{equation}
Finally, note that $\E_x [-\ln\widehat V^{(1)}]=\E [-\ln\widehat V]=\mu$
for every $x$, and thus $\mu$ in \eqref{mrw:mu} equals our $\mu$ in
\eqref{def:mu}.

We can thus apply \Cref{thm:mrw-renewal} to \eqref{ma7} to show that for $\P_{\Xi}$-almost all $x\in\RR$,
\begin{align}
    \lim_{λ\to\infty} \mu H(x,y) &= \E\int_{-\infty}^\infty e^{-yβ}f(e^y-\Xi)\,\mathrm dy
    \nonumber
    \\&= \E\int_{\Xi}^\infty λ^{-1-β}f(λ-\Xi)\,\mathrm dλ
    \nonumber
    \\&= \E\int_{0}^\infty (λ+\Xi)^{-1-β}f(λ)\,\mathrm dλ.
    \label{varpi-incgamma0}
\end{align}
Applying \eqref{s0:z-recursion} once more shows 
(using dominated convergence and \eqref{ma6})
that \eqref{varpi-incgamma} also holds for $x=0$, 
so, by \eqref{ma4} and \eqref{ma0}, 
\begin{align}
\varpi 
= \lim_{λ\to\infty}\lambda^{-\beta}F(λ)
= \lim_{λ\to\infty}\mu H(0,\ln λ)
= \E\int_{0}^\infty (λ+\Xi)^{-1-β}f(λ)\,\mathrm dλ.
\label{varpi-incgamma}
\end{align}
Unfortunately, the negative moments 
$\E (λ+\Xi)^{-1-β}$
in \eqref{varpi-incgamma} are hard to
compute in general. However, we can compute the integrals in the following case.

\begin{proof}[Proof of \Cref{thm:s0}]
For the toll function $\delta_{≥k}(T_n) := \eins(n≥k)$ for $k>s_1$, the
function $f$ from \eqref{def:s0-f} is given as $f(ł) = μ\inv\Pk*{\poissonn
  \ge k-s_1}\eins(ł>0)$. This is the probability that the Poisson
point process has  at least $k-s_1$ points before $ł$, and the time $E_{k-s_1}$ up to the $k-s_1$-th point is $\Gamma(k-s_1,1)$-distributed. So $f(ł) = μ\inv\Pk*{E_{k-s_1}≤ł}$, and thus by \eqref{varpi-incgamma}
\begin{align}
    \varpi[\delta_{≥k}] &= \frac1{μ}\int_0^\infty ł^{-1-β}\Pk*{\Xi+E_{k-s_1}≤ł}\mathrm dł
    \nonumber \\ &= \frac1{μ} \Ek[\bigg]{\int_{\Xi+E_{k-s_1}}^\infty λ^{-1-β} \mathrm dλ}
    \nonumber \\ &= \frac1{βμ}{\Ek*{(\Xi+E_{k-s_1})^{-β}}}
    \label{s0:delta-k}
\end{align}
using Fubini's theorem. Note that we assumed in \Cref{thm:s0} that Condition
\ref{cond:noleaves} or \ref{cond:minimals} holds, and also
\ref{cond:noleaves-s1}.
This shows the equality in \eqref{thm:s0:delta-gk}; to show the asymptotic behavior, note that $E_{k-s_1}/k \to 1$ a.s.\ by the law of large numbers (if we model $E_{k-s_1}$ as the sum of $k-s_1$ independent exponentials) and hence also $k^β(\Xi+E_{k-s_1})^{-β} \to 1$ a.s. To show uniform integrability, note that for $k≥s_1+2$
\begin{equation}
    \Ek*{k^{2β}(\Xi+E_{k-s_1})^{-2β}} ≤ \Ek*{k^{2β}E_{k-s_1}^{-2β}} = k^{2β}\frac{\Gamma(k-s_1-2β)}{\Gamma(k-s_1)} \to 1.
\end{equation}
It follows that \eqref{s0:delta-k} is asymptotically equal to $\frac1{βμk^β}$ as claimed.

For \eqref{thm:s0:delta-k}, define $E'_1\sim \Exp(1)$ to be independent of $E_{k-s_1}$, so that $E_{k-s_1+1} \overset d= E_{k-s_1} + E_1'$. We obtain from \eqref{thm:s0:delta-k}
\begin{equation}
    \varpi\br{\fringephi} = \frac1{βμ}\kl*{\Ek*{(\Xi+E_{k-s_1})^{-β}} - \Ek*{(\Xi + E_{k-s_1} + E_1')^{-β}}}.
\end{equation}
The mean value theorem shows that for some random $\Theta \in [0,1]$, 
\begin{equation}\label{s0:dk}
    \varpi[\fringephi] = \frac1{ μk^{β+1}}\Ek*{E_1'k^{β+1}(\Xi+E_{k-s_1}+\Theta E_1')^{-β-1}}
\end{equation}
The same argument as for \eqref{s0:delta-k} shows that the random variables in the expectation in \eqref{s0:dk} converge a.s.\ to $E_1'$ and are uniformly integrable. Hence, we can show \eqref{thm:s0:delta-k} by
\begin{equation}
    \varpi[\fringephi] \sim  \frac1{βμk^{β+1}}\Ek*{E_1'} = \frac1{βμk^{β+1}}.
\end{equation}
\end{proof}

Lastly, we prove \Cref{rem:beta}:
\begin{proof}
    Because $V\sim B(a,s_0+(b-1)s_1)$, we have $\widehat V\sim B(a+β, s_0+(b-1)s_1)$. The formula \eqref{rem:beta:mu} can be considered a basic fact of the beta distribution, obtained by
    \begin{align}
        \Ek*{\ln \widehat V} &= \ddat x0{\Ek*{\widehat V^x}} = \ddat x0{\frac{B(a+β+x,s_0+(b-1)s_1)}{B(a+β, s_0+(b-1)s_1)}}
        \nonumber \\ &= \ddat x0{\frac{\Gamma(a+β+x)\Gamma(a+β+s_0+(b-1)s_1)}{\Gamma(a+β+x+s_0+(b-1)s_1)\Gamma(a+β)}}
        \nonumber \\ &= \frac{\Gamma'(a+β)}{\Gamma(a+β)} - \frac{\Gamma'(a+β+s_0+(b-1)s_1)}{\Gamma(a+β+s_0+(b-1)s_1)}.
    \end{align}
    We claim that $\Xi \sim \Gamma(a+β, 1)$ solves \eqref{def:xi}: The sum $\Xi+E_{s_0+(b-1)s_1}$ is the sum of two independent Gamma-distributed variables, so it is $\Gamma(a+β+ s_0+(b-1)s_1, 1)$-distributed.
    Now we can use the fact that for any $a,b>0$, the product of a $\Gamma(a+b,1)$-distributed variable with an independent $B(a,b)$-distributed variable is $\Gamma(a,1)$-distributed, and that therefore $(\Xi+E_{s_0+(b-1)s_1})V$ is $\Gamma(a+β,1)$ distributed, as $\Xi$.
    Hence, $\Xi + E_{k-s_1}$ has distribution
    $\Gamma(a+β+k-s_1,1)$ and thus
    \begin{equation}\label{rem:beta:moment}
        \Ek*{(\Xi+E_{k-s_1})^{-β}} = \frac{\Gamma(a+k-s_1)}{\Gamma(a+β+k-s_1)}.
    \end{equation}
    Then, \Cref{thm:s0} shows that
    \begin{align}
        \varpi[\fringephi] &= \frac1{βμ}\kl*{\frac{\Gamma(a+k-s_1)}{\Gamma(a+k-s_1+β)} - \frac{\Gamma(a+k+1-s_1)}{\Gamma(a+k+1-s_1+β)}}
        \nonumber \\ &= \frac{\Gamma(a+k-s_1)}{βμ\Gamma(a+k-s_1+β)}\kl*{1-\frac{a+k-s_1}{a+k-s_1+β}}
        \nonumber \\ &= \frac{\Gamma(a+k-s_1)}{μ\Gamma(a+k-s_1+β)\kl{a+k-s_1+β}}
    \end{align}
    Combining the two factors in the denominator  into a single $\Gamma(a+k-s_1+β+1)$ gives \eqref{rem:beta:1}.
\end{proof}

% There are three (!) ways to calculate the mean if $s_0+s_1>0$}
% \begin{itemize}\color{magenta}
%     \item The number of balls missing in a node are part of a Harris recurrent Markov chain. Therefore, we can use a MC renewal theorem to get
%     \begin{equation}
%         \varpi = \frac1{μ}\int_m^\infty \int \Ek*{\phi(T_{\poissonn[\nu-m] + s_1})} \nu^{-β-1}\mathrm d\xi(m)\mathrm d\nu,
%     \end{equation}
%     where $\xi$ is the stationary distribution of the Markov chain on $[0,\infty)$ which in every step
%     \begin{itemize}
%         \item Multiply by $\widehat V$
%         \item Add an $\Exp(1)$ variable
%     \end{itemize}
%     \item Instead notice that the tree can be (randomly) constructed from a tree with $s_0=s_1=0$ by an operation called \newcommand\shrink{\operatorname{shrink}}$\shrink$. This leads to a small toll function
%     \begin{equation}
%         \phi'(T_n) = \phi(\shrink(T_n)) + \sum_{i=1}^b X(\phi)(\shrink(T_{n_v})) - X(\phi)(\shrink(T_{n_v'})).
%     \end{equation}
%     Unfortunately, the lower part is not easy to calculate at all.
%     \item Last, we could look at a geometrically distributed $n$, where we get the recursion
%     \begin{equation}
%         F(x) = f(x) + \fracc x{x+1}^{s_0+(b-1)s_1}b\Ek*{F(xV)},
%     \end{equation}
%     that might be solvable in special cases.
% \end{itemize}

\appendix
\section{Some renewal theory}\label{sec:rt}

In this paper, we repeatedly make use of following version of the key
renewal theorem for excessive renewal equations, adapted from
\cite{asmussen}. All renewal equations in the paper use the same
waiting-time distribution $b\P_{-\ln V}$, so we state the theorem using $V,
\widehat V, M_v^1, \mu$ and $\beta$ from before, see 
\eqref{def:beta}, \eqref{def:v-bias}, \eqref{def:mu} and \eqref{def:mass}. For this waiting-time distribution, the \emph{renewal function} $U(t)$ is defined as
\begin{equation}
    U(t) = \sum_{k=0}^\infty b^k\Pk{-\ln(Y_k)≤t},
\end{equation}
where $Y_k$ is the product of $k$ independent copies of $V$. Recalling the
definition of convolution in \eqref{def:convolution}, $U(t)$ fulfils the
convolution equation 
\begin{align}\label{rt:1}
U(t) = \eins(t≥0) + (b\P_{-\ln V}*U)(t).  
\end{align}
$U(t)$ is a monotonously growing function, and the associated measure $\mathrm dU(t)$ is called \emph{renewal measure}.

Because $b\P_{-\ln V}$ has total mass $b>1$, \eqref{rt:1} 
is a so-called \emph{excessive} renewal equation, which can be shifted to a \emph{proper} renewal equation using a probability measure by size-biasing $V$ with $e^{\beta\ln V} = V^\beta$, which gives us the probability measure $\P_{-\ln \widehat V}$, see \eqref{def:v-bias}. Then,
    $\widehat U(t) := e^{-\beta t}U(t)$ fulfills the proper renewal equation
    \begin{equation}\label{rt:2}
\widehat U(t) = e^{-\beta t}\eins(t≥0) + \kl*{\P_{-\ln \widehat V} * \widehat U}(t).
    \end{equation}
Using this shift to
$\P_{-\ln \widehat V}$ and the proper
renewal measure $\widehat U$, one can arrive at the following result. 

A function $g$ on $\RR$ is \emph{directly Riemann integrable} if it is
continuous almost everywhere, and 
    \begin{equation}\label{eq:dri}
        \sum_{n\in\mathbb{Z}}\sup_{n\rho\le q<(n+1)\rho} |g(q)| < \infty 
    \end{equation}
for some (and thus all) $\rho >0$.

\begin{theorem}[Key renewal theorem for excessive renewal equations]\label{thm:krt}
   Let $G$ be a function  on $\RR$ and let $Z=U\ast G$,
    so that $Z$ is the solution of the renewal equation
    \begin{equation}\label{rt:z-as-eq}
        Z = G + b\P_{-\ln V}*Z.
    \end{equation}
   If\/ $G(q)e^{-βq}$ 
%    \begin{equation}
%        \sum_{n\in\mathbb{Z}}\sup_{n\rho\le q<(n+1)\rho} |e^{-βq}G(q)| <
%        \infty \text{ for some } \rho >0, 
%    \end{equation}
    is directly Riemann integrable, then, for $q\to\infty$, in the nonlattice case 
    \begin{equation}
        Z(q) = \frac{e^{βq}}{µ}\int_{-\infty}^\infty G(u)e^{-βu}\mathrm du + o(e^{βq})
    \end{equation}
     and in the lattice case
    \begin{equation}
        Z(q) = \frac{de^{βq}}{µ}\sum_{u\in \ZZ} G(q+du)e^{-β(q+du)} + o(e^{βq}).
    \end{equation}
\end{theorem}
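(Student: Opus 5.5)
This is the standard key renewal theorem for proper renewal equations, transferred through the exponential tilt already described before the statement. The plan is to reduce to that theorem, in the form of Asmussen's version for directly Riemann integrable functions.

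First, set $\widehat Z(q):=e^{-\beta q}Z(q)$ and $\widehat G(q):=e^{-\beta q}G(q)$. Multiply the renewal equation \eqref{rt:z-as-eq} by $e^{-\beta q}$ and use \eqref{eq:v-bias}:
\[
e^{-\beta q}\,b\E[Z(q+\ln V)] = b\E\bigl[V^\beta e^{-\beta(q+\ln V)}Z(q+\ln V)\bigr] = \E\bigl[\widehat Z(q+\ln\widehat V)\bigr].
\]
Hence $\widehat Z=\widehat G+\P_{-\ln\widehat V}*\widehat Z$. This is a proper renewal equation driven by the probability measure $\P_{-\ln\widehat V}$, which has mean $\mu\in(0,\infty)$ by \eqref{def:mu}.

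Second, identify $Z=U*G$ with the proper-renewal solution. Expanding, $U*G=\sum_k b^k\E[G(q+\ln Y_k)]$, and the same tilting identity applied $k$ times gives $e^{-\beta q}(U*G)(q)=\sum_k\E[\widehat G(q+\ln\widehat Y_k)]=(\widehat U_0*\widehat G)(q)$. Here $\widehat U_0$ is the proper renewal measure of $-\ln\widehat V$ and $\widehat Y_k$ is a product of $k$ i.i.d.\ copies of $\widehat V$. Fubini/Tonelli justifies this termwise by first treating $|G|$; the resulting series converges because $\widehat G$ is directly Riemann integrable and the renewal measure of intervals of length $\rho$ is uniformly bounded. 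So $Z$ is well defined.

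Third, apply the classical key renewal theorem to $\widehat U_0*\widehat G$ with $\widehat G$ directly Riemann integrable. In the nonlattice case it gives $\widehat Z(q)\to\mu^{-1}\int\widehat G$. In the lattice case, where the support of $-\ln\widehat V$ generates $d\ZZ$, it gives $\widehat Z(q)-\frac d\mu\sum_{u\in\ZZ}\widehat G(q+du)\to0$. Multiplying by $e^{\beta q}$ yields the two formulas, since $\widehat G(q+du)=G(q+du)e^{-\beta(q+du)}$.

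Two points need care. First, the lattice span must be the same for $\widehat V$ as for $V$. This holds because $\P_{\widehat V}$ and $\P_V$ are mutually absolutely continuous on $\{V>0\}$: the density is $bv^\beta$. One must also discard the atom $V=0$, where $-\ln V=\infty$, and check that the atom $\widehat V=1$ with $\P(V=1)<1$ does not prevent transience. Second, the classical theorem is usually stated for functions on $[0,\infty)$, whereas here $G$ lives on $\RR$. I expect this to be the main technical obstacle. I would handle it through the two-sided version in Asmussen, or by noting that direct Riemann integrability on $\RR$ together with $-\ln\widehat V\ge0$ gives the same conclusion, since only $u\le q$ matters and the tail sum is controlled by \eqref{eq:dri}.
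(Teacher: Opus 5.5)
Your proposal is correct and follows the paper's route. The paper likewise reduces to the proper renewal equation via the exponential tilt to $\widehat V$, though it cites Asmussen's Theorem V.7.1 instead of carrying out the tilt by hand; it then applies the classical key renewal theorem (Proposition V.4.8 in the lattice case) and covers $G$ on the whole real line by observing that the classical proof goes through using Blackwell's theorem and the uniform bound on the renewal measure of short intervals, while your explicit checks on the lattice span and on discarding the atom $V=0$ are details the paper leaves implicit.
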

\begin{proof}
    % To get from \eqref{rt:z-as-sum} to \eqref{rt:z-as-eq}, note that the sum decomposes as
    % \begin{equation}
    %     Z(q) = \sum_{v\in [b]^\ast} \Ek*{G(q+\ln M^1_v)} = G(q) + \sum_{i=1}^b \sum_{v\in [b]^\ast} \Ek*{G(q+\ln M^1_{iv})}.
    % \end{equation}
    % Then, $\ln M^1_{iv}$ at a random $i$ is distributed as $\ln V + \ln M^1_v$ with $V$ and $M^1_v$ independent. Conditioning on $V$ gives
    % \eqref{rt:z-as-eq}.
    %
    See Theorem V.7.1 in \textcite{asmussen}. 
Note that $\beta$ in \cite{asmussen} corresponds to $-β$ here, given as the value such that $\int_0^\infty e^{-βq}\mathrm db\P_{-\ln V}(q) = b\Ek*{V^β} = 1$,
    and $\mu$ is given as $\int_0^\infty qe^{-βq}\mathrm db\P_{-\ln V}(q) = b\Ek*{-V^β\ln V} = \Ek*{-\ln\widehat V} = µ$.
    Theorem~V.7.1 in \cite{asmussen}
is only stated for the nonlattice case and for functions
    $G$ with $G(q) = 0$ if $q<0$, but it extends to our setting as follows: 
    
For the lattice case, we use the same tilting as in Theorem~V.7.1 and then use the key renewal theorem stated earlier in Proposition~V.4.8.

    The proof of the key renewal theorem V.4.7, which forms the base of Theorem V.7.1, also works for functions $G$ directly Riemann integrable and supported on the whole real line. The base idea of the proof is to write $Z$ as
    \begin{equation}
        Z(x) = \int_0^\infty G(x-y)\mathrm dU(y),
    \end{equation}
    and interpret this a Riemann-Stieltjes integral, with boxes of size $h>0$ centered around $x$. For any $y,h>0$, it is known that $U(y+h)-U(y)≤U(h)$ and $U(y+h)-U(y) \to \frac h{μ}$ (in the nonlattice case) for $y\to\infty$ by Theorem V.2.4(iii) and Blackwell's renewal theorem V.4.4 in \cite{asmussen}. The first fact guarantees a uniform upper bound and the second fact then shows convergence. The lattice case is similar.
\end{proof}

\subsection{Markov renewal theory}\label{sec:mrt}

A Markov random walk is a generalization of a renewal process where the independent renewal steps are replaced by a Markov chain. 
For the reader's convenience, we include its definition and the Markov renewal theorem from Alsmeyer \cite{alsmeyer_1994, alsmeyer_markov_1997}:

Let $(M_n,Y_n)$ be a temporally homogeneous Markov chain on a measurable space $(S, \mathcal S)$ with countably generated $\sigma$-field $\mathcal S$ and transition kernel $\mathbf P: \mathcal S \times (\mathcal S\otimes \mathfrak B) \to [0,1]$, $\mathfrak B$ being the Borel $\sigma$ field on $\RR$, such that
\begin{equation}\label{mrw9}
    \Pk{M_{n+1}\in A, Y_{n+1}\in B\given M_n, Y_n} = \mathbf P(M_n, A\times B) \quad \text{a.s.},
\end{equation}
so that $(M_{n+1}, Y_{n+1})$ only depends on $(M_n, Y_n)$. The corresponding Markov random walk (MRW) is $(M_n, S_n)$, where $S_n := \sum_{i=1}^n Y_i$. As usual, we denote the starting state / distribution of $(M_n)$ with a subscript, so $\P_x(\cdot)$ and $\E_x[\cdot]$ indicate that $M_0=x$.

$M_n$ is a \emph{Harris-recurrent chain}, 
%as defined in \cite{alsmeyer_markov_1994,alsmeyer_markov_1997},
if there exist some $α \in (0, 1], r ≥ 1$, a measurable \emph{recurrence set} $\mathcal R \subset \mathcal S$, i.e.\ $M_n ∈ \mathcal R$ a.s.\ infinitely often for any starting point $x ∈ \mathcal S$, and a probability measure $\phi$ on $\mathcal R$, such that 
\begin{equation}\label{harris-condition}
    \Pkv x{M_r \in\mathrm dy} ≥ α\phi(\mathrm dy)\text{ for all }x,y\in \mathcal R.
\end{equation}

\begin{theorem}[\textcite{alsmeyer_1994,alsmeyer_markov_1997}]
\label{thm:mrw-renewal}
Let \(\kl{(M_n,S_n)}_{n\ge 0}\) be a nonlattice MRW with Harris-recurrent driving chain \(\kl{M_n}_{n\ge0}\) having stationary measure \(\xi \) on \(\mathcal S\),
and let
\begin{align}\label{mrw:mu}
\mu:=\int_{\mathcal S} \E_x[Y_1]\,\xi(\mathrm d x)
= \E_\xi[Y_1]
\in(0,\infty]  
.\end{align}
If \(G:\:\mathcal S\times\mathbb{R}\to\mathbb{R}\) is a measurable function satisfying
\begin{align} \label{cond:mrw-continuous}
    \text{\(y\mapsto G(x,y)\) is Lebesgue-a.e.\ continuous for \(\xi\)-almost every \(x\in \mathcal S\), and} \\
    \label{cond:mrw-riemann}
    \int_{\mathcal S} \sum_{n\in\mathbb{Z}}\sup_{n\rho\le y<(n+1)\rho} |G(x,y)|\,\xi(\mathrm d x)<\infty \quad \text{ for some }\rho>0,
\end{align}
then
\begin{equation}
(G * U_x)(t) :=\Ekv[\bigg]x{\sum_{n\ge0} G(M_n,t-S_n)} \longrightarrow \frac{1}{\mu}\int_{\mathcal S}\!\int_{\mathbb{R}} G(u,v)\,\mathrm d v\,\xi(\mathrm d u)
\end{equation}
holds for $\xi$-almost all $x\in \mathcal S$ as $t\to\infty$ with the convention that the right-hand side is \(0\) if \(\mu=\infty \).
\end{theorem}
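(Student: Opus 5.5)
The plan is to reduce \Cref{thm:mrw-renewal} to the classical key renewal theorem for a proper (probability) renewal measure, via regeneration. First assume $r=1$ in the minorization \eqref{harris-condition}. Then the Nummelin splitting technique applies: each time $M_n\in\mathcal R$, flip an independent coin with success probability $\alpha$; on success draw $M_{n+1}$ from $\phi$, otherwise from the residual kernel $(\mathbf P(M_n,\cdot)-\alpha\phi)/(1-\alpha)$, and draw the increment $Y_{n+1}$ from the conditional law given $M_n,M_{n+1}$. This yields regeneration times $0\le\tau_0<\tau_1<\dots$, finite a.s.\ by Harris recurrence, such that the cycles $\bigl((M_n)_{\tau_k\le n<\tau_{k+1}},\,S_{\tau_{k+1}}-S_{\tau_k}\bigr)$ are i.i.d.\ for $k\ge0$ and start from $\phi$. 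For $r>1$ one applies this to the $r$-step chain. The cycles are then only one-dependent, and I would handle this by the standard device of splitting the sum over $n$ into residue classes mod $r$, or by conditioning on the cycle boundaries. This is the step where I expect the most technical friction.

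Next I decompose $(G*U_x)(t)=\E_x\sum_{n<\tau_0}G(M_n,t-S_n)+\E_x\sum_{k\ge0}g(t-S_{\tau_k})$, where the cycle function is
\[
g(s):=\E_\phi\Bigl[\sum_{n<\tau}G(M_n,s-S_n)\Bigr],
\]
with $\tau$ the length of a cycle started from $\phi$. The regeneration identity (Kac's formula) gives $\xi(A)=\E_\phi[\sum_{n<\tau}\eins(M_n\in A)]/\E_\phi[\tau]$. Combined with the ergodic theorem, it also gives $\E_\phi[S_\tau]=\E_\phi[\tau]\,\mu$, with both sides infinite together. The points $S_{\tau_k}$ form a delayed renewal process. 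Its step distribution is nonlattice because the MRW is nonlattice; I take this as the definition of nonlattice for an MRW, so no extra argument is needed.

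To apply the key renewal theorem (the proper case of \Cref{thm:krt}, i.e.\ \cite[V.4.7]{asmussen}) to $g$, I need $g$ to be directly Riemann integrable. For the summability \eqref{eq:dri}, note that shifting by $S_n$ moves each interval $[m\rho,(m+1)\rho)$ across at most two intervals of the original grid. Hence
\[
\sum_{m\in\ZZ}\sup_{m\rho\le s<(m+1)\rho}|g(s)|\le 2\,\E_\phi\Bigl[\sum_{n<\tau}\sum_{m\in\ZZ}\sup_{m\rho\le y<(m+1)\rho}|G(M_n,y)|\Bigr],
\]
and by Kac's formula the right-hand side equals $2\E_\phi[\tau]$ times the integral in \eqref{cond:mrw-riemann}, which is finite. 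For a.e.\ continuity I use Fubini together with \eqref{cond:mrw-continuous}: for Lebesgue-a.e.\ $s$ the map $s\mapsto G(M_n,s-S_n)$ is a.s.\ continuous at $s$ for every $n<\tau$, and dominated convergence, with the bound just derived, transfers continuity to $g$. The key renewal theorem then gives, as $t\to\infty$,
\[
\E\sum_{k\ge0}g(t-S_{\tau_k})\to\frac{1}{\E_\phi[\tau]\,\mu}\int_\RR g=\frac1\mu\int\!\!\int G\,\mathrm dv\,\xi(\mathrm du).
\]
If $\mu=\infty$ the limit is $0$. The delay before $\tau_0$, both the initial term and the shift $S_{\tau_0}$, is negligible for $\xi$-a.e.\ $x$: $\tau_0<\infty$ a.s., and the initial term tends to $0$ by dominated convergence. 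The required domination holds for $\xi$-a.e.\ starting point because integrating the bound over $\xi$ is finite by \eqref{cond:mrw-riemann}. This $\xi$-a.e.\ restriction is exactly why the theorem is stated only for $\xi$-almost all $x$.
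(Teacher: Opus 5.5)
The paper does not prove this theorem; it quotes it from Alsmeyer. Your sketch therefore has to stand on its own, and its central step fails: the cycles are \emph{not} i.i.d., already for $r=1$.

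In an MRW the increment $Y_{n+1}$ is generated jointly with $M_{n+1}$; you yourself draw it from the conditional law given $(M_n,M_{n+1})$. By contrast, \eqref{harris-condition} only minorizes the marginal law of $M$. So at a regeneration epoch the increment $Y_{\tau_k}$ depends on the freshly drawn state $M_{\tau_k}$, which has two consequences:
\begin{enumerate}
\item $S_{\tau_k}$ is not independent of the cycle starting at $\tau_k$. This invalidates writing that cycle's contribution as $\E\,g(t-S_{\tau_k})$ with your averaged $g$.
\item The consecutive increments $S_{\tau_k}-S_{\tau_{k-1}}$ and $S_{\tau_{k+1}}-S_{\tau_k}$ both depend on $M_{\tau_k}$. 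Hence $(S_{\tau_k})$ is not a renewal process, and the classical key renewal theorem does not apply.
\end{enumerate}
This is not a corner case. In the paper's own application, $Y_n=-\ln\widehat V^{(n)}$ and $M_n=\widehat V^{(n)}(M_{n-1}+E^{(n)})$ share $\widehat V^{(n)}$. Passing to residue classes mod $r$ cannot help, since the dependence is already present for $r=1$.

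Nor can you define ``nonlattice'' as nonlatticeness of the cycle-increment law: with that definition the statement is false. Take $\mathcal S=[0,1]$ with $M_n$ i.i.d.\ uniform, so $\mathcal R=\mathcal S$, $r=1$, $\alpha=1$ and $\phi=\xi$ is uniform. Let $Y_n=\tfrac12(M_n-M_{n-1})+1$, so that $S_n=n+\tfrac12(M_n-M_0)$ and $\mu=1$. Your scheme regenerates at every step, and the cycle increments have a continuous law. Yet for $G(u,v)=\eins(0\le v<h)$ with $0<h<\tfrac12$ one has $(G*U_x)(t)=\sum_n\P_x(S_n\in(t-h,t])$. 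Since $S_n$ (for $n\ge1$) is uniform on $[n-\tfrac x2,n-\tfrac x2+\tfrac12]$, this sum equals $0$ for some arbitrarily large $t$ and $2h$ for others. It never converges to $h/\mu=h$.

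The correct notion, as in Alsmeyer, is that there is no $d>0$ and no measurable shift function $\gamma$ with $Y_1\in\gamma(M_1)-\gamma(M_0)+d\mathbb{Z}$ $\P_\xi$-a.s. In the example above, $\gamma(u)=u/2$ and $d=1$ show the MRW is arithmetic.

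What is missing is a way to handle the one-dependent block structure and to connect the shift-function condition to it. That is precisely what a proof under the bare marginal condition \eqref{harris-condition} must supply. Your reduction is sound under an extra one-step minorization $\mathbf P(x,\cdot)\ge\alpha\Psi$ of the joint kernel on $\mathcal S\times\mathbb{R}$ for $x\in\mathcal R$. In that case you regenerate at the coin-success epochs $\sigma_k$; then $S_{\sigma_k}$ is independent of everything after, and the cycles are genuinely i.i.d.

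A smaller slip concerns the delay term. Integrating $\E_x\sum_{n<\tau_0}h(M_n)$ against $\xi$ gives $\E_\xi\sum_{n<\tau_0}h(M_n)$, not a multiple of $\int h\,\mathrm d\xi$. This can be infinite; for $h\equiv1$ it requires $\E_\phi[\tau^2]<\infty$. So the $\xi$-a.e.\ domination of the delay term needs a different argument.
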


\printbibliography

\end{document}